\newcommand{\bx}{\mathbf{x}}
\newcommand{\1}{{\rm 1\hspace*{-0.4ex}
\rule{0.1ex}{1.52ex}\hspace*{0.2ex}}}
\theoremstyle{plain}
\newtheorem{thm}{Theorem}[section]{\bf}{\it}
\newtheorem{prop}[thm]{Proposition}{\bf}{\it}
\newtheorem{lemma}[thm]{Lemma}{\bf}{\it}
\newtheorem{lem}[thm]{Lemma}{\bf}{\it}
\newtheorem{cor}[thm]{Corollary}{\bf}{\it}
\theoremstyle{definition}
\newtheorem{remark}[thm]{Remark}{\it}{\rm}
{\bf}{\rm}
{\rm}{\rm}
\newcommand{\supp}{{\operatorname{supp}}}
\newcommand{\Vol}{{\operatorname{Vol}}}
\newcommand{\dist}{{\operatorname{dist}}}
\newenvironment{pf}{\par\medskip\noindent\textit{Proof}:\,}{\hspace*{\fill}\qed\medskip\par\noindent}
\newenvironment{pf*}[1]{\par\medskip\noindent\textit{#1}\,:}{\hspace*{\fill}\qed\medskip\par\noindent}  
\numberwithin{equation}{section}
\newcommand{\R}{{\mathbb R}}
\newcommand{\C}{{\mathbb C}}
\newcommand{\N}{{\mathbb N}}
\title[Pointwise estimates]{Estimates on 
derivatives of Coulombic wave functions and their electron densities}
\thanks{\copyright\ 2018 by the
       authors. This article may be reproduced in its entirety for
       non-commercial purposes. SF was partially supported by a Sapere
       Aude Grant from the Independent Research Fund Denmark, Grant
       number DFF-4181-00221, and by the European Research Council,
       ERC grant agreement 202859. 
}
\author[S. Fournais and T. \O. S\o rensen]
{S{\o}ren Fournais \and
Thomas \O stergaard S\o rensen}
\address[S. Fournais]{Department of Mathematics, Aarhus University, {\,}Ny
  Mun\-ke\-gade 118, DK-8000 Aarhus C, Denmark.
           }
\email{fournais@math.au.dk}
\address[T. \O stergaard S\o rensen]
{Department Mathematisches Institut, LMU Munich, Theresienstrasse 39,
  D-80333 Munich, Germany.}
\email{sorensen@math.lmu.de}
\date{\today}
\begin{document}

\thispagestyle{empty}

\begin{abstract}
We prove {\it a priori} bounds for all derivatives of
non-relativistic Coulombic
eigenfunctions \(\psi\), involving negative powers of the distance to
the singularities of the 
many-body potential. We use these to derive bounds for all
derivatives of the corresponding one-electron densities \(\rho\),
involving negative powers of the distance from the nuclei. The results
are both natural and optimal, as seen from the ground state of Hydrogen.
\end{abstract}

\maketitle

\section{Introduction and results}
In a series of papers \cite{monster,non-iso,KS,thirdder}, the present
authors (together with M. and T. Hoff\-mann-Osten\-hof) have studied the
regularity properties of molecular Coulombic eigenfunctions \(\psi\)
and their electron densities \(\rho\) at the singularities of the
many-body Cou\-lomb potential. For a recent review, see
\cite[pp. 170--178]{Simon-on-Kato}. Some relevant previous works not
mentioned in that review are \cite{HO-alone,HO-Nadira,HO-Stremnitzer}. 

In this paper we take a different approach. Away from these singularities
(where eigenfunctions are real analytic) we prove local
\(L^p\)-estimates on all pointwise derivatives of such eigenfunctions
\(\psi\), with the optimal behaviour in the distance to the
singularities. The estimates are {\it a priori}, so, if \(\psi\)
decays exponentially---as is typically the case for atomic and
molecular eigenfunctions---we get exponential decay of these
estimates. As a corollary we get that all pointwise derivatives of
\(\psi\) belong to certain weighted Sobolev-spaces. This formulation
is inspired by the results in \cite{NistorEtAl}, which we improve and
clarify (see Remarks~\ref{rem:1}(iv) and \ref{rem:2} below for
more details).

We then apply this to obtain estimates on pointwise derivatives of the
corresponding electron density \(\rho\) away from the nuclei, with the
optimal behaviour in the distance to the positions of the nuclei.  

Both types of results are of mathematical interest in themselves, but
also of importance for numerical calculations in Quantum Chemistry.

We now formulate the problem. For simplicity of the presentation (and
only therefore), we restrict our attention to the case of atoms (i.e.,
one nucleus).
Let $H$ be the non-relativistic Schr\"o\-dinger operator of an
$N$-electron atom with nuclear charge $Z$ in the fixed nucleus  
approximation,
\begin{equation}\label{H}
  H=\sum_{j=1}^N\Big(-\Delta_j-\frac{Z}{|x_j|}\Big)
    +\sum_{1\le i<j\le N}\frac{1}{|x_i-x_j|}
   ={}-\Delta+V\,.
\end{equation}
Here the $x_j=(x_{j,1},x_{j,2},x_{j,3})\in \mathbb R^3$, $j=1,\dots,
N$, denote the positions of the electrons, and the $\Delta_j$ are the
associated Laplacians so that $\Delta=\sum_{j=1}^N\Delta_j$ is the
$3N$-dimensional Laplacian. Let ${\bf x}=(x_1,x_2,\dots, x_N)\in
\mathbb R^{3N}$ and let $\nabla=(\nabla_1,\dots, \nabla_N)$ 
denote the $3N$-dimensional gradient operator. 
By abuse of notation, we use \(|\cdot|\) for the Euclidean norm in
both \(\R^3\) and \(\R^{3N}\). 
The operator $H$ is selfadjoint with operator domain $\mathcal
D(H)=W^{2,2}(\mathbb R^{3N})$ and form domain $\mathcal
Q(H)=W^{1,2}(\mathbb R^{3N})$ \cite{Kato-s-a}. 
We are interested in the behaviour of (pointwise) derivatives 
away from the singularities of the potential \(V\) in \eqref{H}
of \(L^2\)-eigenfunctions \(\psi\) of the 
operator \(H\), 
\begin{align}\label{eigen}
  H\psi=E\psi\ , \quad \text{ with }\psi\in W^{2,2}(\mathbb R^{3N})\
  ,\ E\in \R\,. 
\end{align} 

More precisely, let \(\Sigma\) denote the set of coalescence points
(i.e., singularities of \(V\)),  
\begin{align}
  \label{eq:Sigma}
  \Sigma:=\big\{{\bf x}=(x_1,\ldots,x_N)\in\R^{3N}\,\big|\,
           \prod_{j=1}^{N}|x_j|
           \!\!\prod_{1\le i<j\le N}|x_i-x_j|=0\big\}\,.
\end{align}
Note that \(V\) is real analytic in the open set
\(\R^{3N}\setminus\Sigma\). 
Hence if, for some open \(\Omega\subset\R^{3N}\), \(\psi\) is a weak
solution to \eqref{eigen} in \(\Omega\), then \cite[Section 7.5,
pp.\ 177--180]{Hormander} \(\psi\) is real
analytic away from \(\Sigma\), that is, \(\psi\in
C^{\omega}(\Omega\setminus\Sigma)\). In particular, any eigenfunction
\(\psi\in W^{2,2}(\R^{3N})\)  
of the operator \(H\) is real analytic in \(\R^{3N}\setminus\Sigma\). 
Moreover it is known that \(\psi\in C^{0,1}_{\rm
  loc}(\R^{3N})\) \cite[Proposition~1.5]{AHP}, an improvement of
Kato's famous Cusp Condition \cite{Kato-reg}; see also
\cite{Maria-Seiler}. 

Define the distance from a point \({\bf x}\in\R^{3N}\) to a
subset \(K\subseteq\R^{3N}\)  by  
\begin{align}\label{def:dist}
  d({\bf x},K) = \inf \,\big\{ |{\bf x}-{\bf y}|\,\big|\, {\bf
    y}\in K\big\}\,.
\end{align}
Note that
\begin{align}\label{eq:dist-detail}
  d({\bf x},\Sigma) = \min\big\{\,|x_i|,
  \tfrac{1}{\sqrt{2}}|x_j-x_{k}|\,\big|\, i,j,k\in\{1,\ldots,N\},
  j\neq k\,\big\}\,.
\end{align}
More generally, for \(k\in\{1,\ldots,N\}\), let
\(\Sigma^{k}\subset\Sigma\) (the singularities of \(V\) involving
\(x_k\)) be defined by
\begin{align}
  \label{eq:Sigma-k}
  \Sigma^{k}:=\big\{{\bf x}=(x_1,\ldots,x_N)\in\R^{3N}\,\big|\,
           |x_k|
           \!\!\prod_{j=1, j\neq k}^{N}|x_k-x_j|=0\big\}\,;
\end{align}
then \(\Sigma=\cup_{k=1}^{N}\Sigma^{k}\), and we note that, for
\(k\in\{1,\ldots,N\}\),  
\begin{align}\label{eq:dist-detail-k}
  d({\bf x},\Sigma^{k}) = \min\big\{\,|x_k|,
  \tfrac{1}{\sqrt{2}}|x_k-x_{j}|\,\big|\, j\in\{1,\ldots,N\},
  j\neq k\,\big\}\ge d({\bf x},\Sigma)\,.
\end{align}
For \(Q\subseteq\{1,\ldots,N\}\), let \(\Sigma^{Q}:=\cup_{k\in
  Q}\Sigma^{k}\subseteq\Sigma\) (the singularities of \(V\) involving
some \(x_k\) with \(k\in Q\)). This way, \(d({\bf x},\Sigma^{Q})\le
d({\bf x},\Sigma^{k})\) 
for all  \(k\in Q\), and 
\begin{align}\label{eq:dist-detail-Q}\nonumber
  d({\bf x},\Sigma^{Q}) &= \min\big\{\,|x_k|,
  \tfrac{1}{\sqrt{2}}|x_k-x_{j}|\,\big|\, k\in Q, j\in\{1,\ldots,N\},
  j\neq k\,\big\}\\&\ge
  d({\bf x},\Sigma)\,.
\end{align}
For \(\alpha=(\alpha_1,\ldots,\alpha_N)\in (\N_0^{3})^{N}=\N_0^{3N}\),
let
\begin{align}\label{def:Q-alpha}\nonumber
  Q_{\alpha}&:=\big\{ k\in\{1,\ldots,N\}\,\big|
  \alpha_k\neq0\in\N_0^3\,\big\}\subseteq
  \{1,\ldots,N\}\,,
  \\\Sigma^{\alpha}&:=\Sigma^{Q_{\alpha}}\subseteq\Sigma\ ,\quad d_{\alpha}({\bf
    x},\Sigma):=d({\bf x},\Sigma^{\alpha})\,. 
\end{align}
That is, for \(\alpha\in\N_0^{3N}\) fixed, \(\Sigma^{\alpha}\) is the
set of singularities of \(V\) involving the \(x_k\)'s for which
\(\alpha_k\neq0\). 
Hence,
\begin{align}\label{eq:d-alpha}\nonumber
  d_{\alpha}({\bf x},\Sigma)&=\min\big\{\,|x_k|,
  \tfrac{1}{\sqrt{2}}|x_k-x_{j}|\,\big|\, k\in Q_{\alpha}, j\in\{1,\ldots,N\},
  j\neq k\,\big\}\,,\\
  |{\bf x}|&\ge d_{\alpha}({\bf x},\Sigma)\ge d({\bf x},\Sigma)\,.
\end{align}
Note that \(\Sigma^{\alpha}=\Sigma\) (and therefore, \(d_{\alpha}({\bf
  x},\Sigma)=d({\bf x},\Sigma)\)) for all
\(\alpha=(\alpha_1,\ldots,\alpha_N)\in\N_0^{3N}\) for which
\(\alpha_k\neq0\) for all \(k\in\{1,\ldots,N\}\). 

Let \(B_{n}(x,r)\subset\R^{n}\) denote the open ball of centre
\(x\in\R^{n}\) and radius \(r>0\). We recall that for
\(\alpha=(\alpha_1,\ldots,\alpha_N)\in (\N_0^{3})^{N}=\N_0^{3N}\), 
\(\alpha_{k}=(\alpha_{k,1},\alpha_{k,2},\alpha_{k,3})\in \N_0^{3}\),
we let \(|\alpha|=\sum_{k=1}^{N}\sum_{j=1}^{3}\alpha_{k,j}\). 

The first main result of this paper is the following. Our main
interest are the cases \(p=2\) and \(p=\infty\) (for the proof, see
Section~\ref{sec:proof main} below).
\begin{thm}\label{thm:main-one}
Let \(H\) be the non-relativistic Hamiltionian given by \eqref{H}.
Let the singular set
\(\Sigma\subset \R^{3N}\) be defined by 
\eqref{eq:Sigma}, 
and let the distance \(d({\bf x},\Sigma)\) from \({\bf x}\in\R^{3N}\)
to \(\Sigma\) be given by \eqref{eq:dist-detail}. Furthermore,
for every \(\alpha\in\N_{0}^{3N}\), \(|\alpha|\ge
1\), let the corresponding singular set \(\Sigma^{\alpha}\subset
\R^{3N}\) be defined by 
\eqref{def:Q-alpha}, 
and the distance \(d_{\alpha}({\bf x},\Sigma)\) from \({\bf
  x}\in\R^{3N}\) to \(\Sigma^{\alpha}\) be 
given by \eqref{eq:d-alpha}. 

Then:
\begin{itemize}
\item[(i)]
For all \(p\in(1,\infty]\), all \(\alpha\in\N_{0}^{3N}\), \(|\alpha|\ge
1\), all \(0<r<R<1\), and all \(E\in\C\), there
exists a constant \(C=C(p,\alpha,r,R,E)\) (depending also on \(N, Z\))
such that, for all 
\(\psi \in W_{{\rm loc}}^{2,2}(\R^{3N})\) satisfying  
\begin{align}
  \label{eq:eigen-bis}
  H\psi=E\psi\,,
\end{align}
and for all \({\bf x}\in \R^{3N}\setminus \Sigma^{\alpha}\), the
following inequality holds:
\begin{align}\label{eq:resultPSI-Q-alpha}
  \|\partial^{\alpha}&\psi\|_{L^{p}(B_{3N}({\bf x},r\lambda_{\alpha}({\bf x})))}
  \\&\le C \,
  \lambda_{\alpha}({\bf x})^{1-|\alpha|} \big(\|\psi\|_{L^{p}(B_{3N}({\bf
      x},R\lambda_{\alpha}({\bf x})))}
  +\|\nabla\psi\|_{L^{p}(B_{3N}({\bf
      x},R\lambda_{\alpha}({\bf x})))}\big)\,,
  \nonumber
\end{align} 
where \(\lambda_{\alpha}({\bf x}):=\min\big\{1, d_{\alpha}({\bf
  x},\Sigma)\big\}\). 
\item[(ii)]
For all \(p\in(1,\infty]\), all \(\alpha\in\N_{0}^{3N}\), \(|\alpha|\ge
1\), all \(0<r<R<1\), and all \(E\in\C\), there
exists a constant \(C=C(p,\alpha,r,R,E)\) (depending also on \(N, Z\))
such that, for all 
\(\psi \in W_{{\rm loc}}^{2,2}(\R^{3N})\) satisfying  \eqref{eq:eigen-bis},
and for all \({\bf x}\in \R^{3N}\setminus \Sigma\), the
following inequality holds:
\begin{align}\label{eq:resultPSIa}
  \|\partial^{\alpha}&\psi\|_{L^{p}(B_{3N}({\bf x},r\lambda({\bf x})))}
  \\&\le C \,
  \lambda({\bf x})^{1-|\alpha|} \big(\|\psi\|_{L^{p}(B_{3N}({\bf
      x},R\lambda({\bf x})))}
  +\|\nabla\psi\|_{L^{p}(B_{3N}({\bf
      x},R\lambda({\bf x})))}\big)\,,
  \nonumber
\end{align}
where \(\lambda({\bf x}):=\min\big\{1, d({\bf
  x},\Sigma)\big\}\). 
\end{itemize}
\end{thm}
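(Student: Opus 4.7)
The plan is to prove part (i) by rescaling and induction on \(|\alpha|\), reducing the task to a scale-invariant local \(L^p\) estimate on the unit ball which is established by interior elliptic regularity. Part (ii) is in fact the easier half of the theorem: the ball \(B_{3N}(\bx,R\lambda(\bx))\) lies in \(\R^{3N}\setminus\Sigma\), so the whole potential \(V\) is real analytic there, and after rescaling by \(\lambda(\bx)\) the rescaled potential is smooth with uniformly bounded derivatives; standard interior \(L^p\) estimates for elliptic operators with smooth coefficients then give the conclusion in one step, without any decomposition of \(V\). So I focus on (i).

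Step 1 (rescaling). Fix \(\bx_0\in\R^{3N}\setminus\Sigma^{\alpha}\), set \(\mu:=\lambda_{\alpha}(\bx_0)\), and define \(\tilde\psi(\by):=\psi(\bx_0+\mu\by)\) on \(B_{3N}(0,1)\). Then \(\tilde\psi\) satisfies
\[
(-\Delta_{\by}+\tilde V)\tilde\psi=\mu^{2}E\tilde\psi,\qquad
\tilde V(\by):=\mu^{2}\,V(\bx_0+\mu\by).
\]
A change of variables combined with \(\mu\le1\) shows that \eqref{eq:resultPSI-Q-alpha} is equivalent to the scale-invariant bound
\[
\|\partial^{\alpha}\tilde\psi\|_{L^{p}(B_{3N}(0,r))}\le C\bigl(\|\tilde\psi\|_{L^{p}(B_{3N}(0,R))}+\|\nabla\tilde\psi\|_{L^{p}(B_{3N}(0,R))}\bigr),
\]
with \(C\) independent of \(\bx_0\) and \(\mu\). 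Only this uniform estimate remains.

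Step 2 (splitting the potential). Write \(V=V^{\mathrm{in}}+V^{\mathrm{out}}\), where \(V^{\mathrm{in}}\) collects the Coulomb summands that involve at least one \(x_k\) with \(k\in Q_{\alpha}\) (these are precisely the summands whose singularities lie in \(\Sigma^{\alpha}\)), while \(V^{\mathrm{out}}\) collects the rest. Then \(V^{\mathrm{out}}\) depends only on \(x_k\) for \(k\notin Q_{\alpha}\), hence
\[
[\partial^{\alpha},V^{\mathrm{out}}]=0.
\]
Since \(B_{3N}(\bx_0,R\mu)\) lies at distance at least \((1-R)\mu\) from \(\Sigma^{\alpha}\), direct differentiation of the \(1/|x|\)-type summands gives \(|\partial^{\gamma}V^{\mathrm{in}}(\bx)|\le C_{\gamma}\mu^{-1-|\gamma|}\) for every multi-index \(\gamma\); after rescaling this means \(\tilde V^{\mathrm{in}}\in C^{\infty}(\overline{B_{3N}(0,R)})\) with all derivatives bounded uniformly in \(\bx_0\) and \(\mu\).

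Step 3 (induction on \(|\alpha|\)). The case \(|\alpha|=1\) is immediate, since \(\partial^{\alpha}\tilde\psi\) is a component of \(\nabla\tilde\psi\). For \(|\alpha|\ge2\), apply \(\partial^{\alpha}\) to the rescaled equation and use \([\partial^{\alpha},\tilde V^{\mathrm{out}}]=0\) with Leibniz:
\[
(-\Delta_{\by}+\tilde V^{\mathrm{out}})\,\partial^{\alpha}\tilde\psi=(\mu^{2}E-\tilde V^{\mathrm{in}})\,\partial^{\alpha}\tilde\psi-\sum_{0<\beta\le\alpha}\binom{\alpha}{\beta}(\partial^{\beta}\tilde V^{\mathrm{in}})(\partial^{\alpha-\beta}\tilde\psi).
\]
The lower-order derivatives \(\partial^{\alpha-\beta}\tilde\psi\) on the right are controlled by the inductive hypothesis, and the top-order term \(\tilde V^{\mathrm{in}}\,\partial^{\alpha}\tilde\psi\) carries a smooth bounded coefficient. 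Applying interior \(L^{p}\) regularity for \(-\Delta_{\by}+\tilde V^{\mathrm{out}}\) on a slightly smaller ball bounds \(\|\partial^{\alpha}\tilde\psi\|_{L^{p}(B_{3N}(0,r))}\) as required, and undoing the rescaling yields \eqref{eq:resultPSI-Q-alpha}.

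The main obstacle is obtaining interior \(L^{p}\) regularity for \(-\Delta_{\by}+\tilde V^{\mathrm{out}}\), because \(\tilde V^{\mathrm{out}}\notin L^{\infty}(B_{3N}(0,R))\): its own \(1/|x|\)-type singularities can sit inside the ball, though they only affect variables \(y_k\) with \(k\notin Q_{\alpha}\). I plan to overcome this by exploiting two structural facts: first, \(\tilde V^{\mathrm{out}}\) is Coulombic of exactly the same form as the original \(V\), so classical Hardy- and Kato-type inequalities allow one to absorb \(\|\tilde V^{\mathrm{out}}u\|_{L^{p}}\) into \(\|u\|_{W^{1,p}}\) on an intermediate ball for finite \(p\); second, since \(\tilde V^{\mathrm{out}}\) is independent of the variables that \(\partial^{\alpha}\) differentiates, it commutes through the entire induction, so one can bootstrap integrability via Sobolev embedding, starting from \(\psi\in W^{2,2}_{\mathrm{loc}}\) and iterating to cover the full range \(p\in(1,\infty]\), treating \(p=\infty\) via Morrey- or Schauder-type endpoint estimates since \(\tilde V^{\mathrm{in}}\) is smooth.
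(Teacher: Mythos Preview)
Your overall architecture---rescale by $\mu=\lambda_\alpha(\bx_0)$, split $V=V^{\mathrm{in}}+V^{\mathrm{out}}$ according to whether a summand involves some $x_k$ with $k\in Q_\alpha$, use $[\partial^\alpha,V^{\mathrm{out}}]=0$, and induct---is exactly the scaffolding of the paper's proof. The gap is precisely where you locate it: interior $L^p$ regularity for $-\Delta_{\by}+\tilde V^{\mathrm{out}}$ with $\tilde V^{\mathrm{out}}\notin L^\infty$. Your proposed fix does not work as stated. The Hardy inequality $\||x|^{-1}u\|_{L^p(\R^3)}\le C_p\|\nabla u\|_{L^p(\R^3)}$ holds only for $1\le p<3$; since each singular summand of $\tilde V^{\mathrm{out}}$ lives in a three-dimensional factor, you cannot absorb $\|\tilde V^{\mathrm{out}}u\|_{L^p}$ into $\|u\|_{W^{1,p}}$ for $p\ge 3$. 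Equivalently, $|y_j|^{-1}\notin L^p_{\rm loc}$ in those three variables once $p\ge 3$, so no amount of Sobolev bootstrapping on $u$ alone pushes $\tilde V^{\mathrm{out}}u$ past $L^{3-\varepsilon}$. Even where Hardy applies, it produces $\|\nabla(\partial^\alpha\tilde\psi)\|_{L^p}$ on the \emph{larger} ball, which you then have to absorb; making that uniform in $\bx_0$ and $\mu$, and in the specific $L^p$--$L^p$ form the theorem demands (same $p$ on both sides), is not addressed.

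The paper resolves this obstacle not by estimating $\tilde V^{\mathrm{out}}$ but by \emph{removing} it. One writes $\psi=e^{F_\alpha}\psi_\alpha$ with the explicit, globally bounded Lipschitz function
\[
F_\alpha(\bx)=\sum_{j\notin Q_\alpha}\!\big(-\tfrac{Z}{2}|x_j|+\tfrac{Z}{2}\sqrt{|x_j|^2+1}\big)
+\!\!\sum_{\substack{j,k\notin Q_\alpha\\ j<k}}\!\!\big(\tfrac14|x_j-x_k|-\tfrac14\sqrt{|x_j-x_k|^2+1}\big).
\]
Since $\Delta_x(-\tfrac{Z}{2}|x|)=-Z/|x|$ and $\Delta_x(\tfrac14|x|)=1/(2|x|)$, one has $\Delta F_\alpha=V^{\mathrm{out}}-G_\alpha$ with $G_\alpha\in L^\infty(\R^{3N})$. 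The equation for $\psi_\alpha$ is then
\[
-\Delta\psi_\alpha-2\nabla F_\alpha\cdot\nabla\psi_\alpha+(V^{\mathrm{in}}+K_\alpha-E)\psi_\alpha=0,\qquad K_\alpha:=G_\alpha-|\nabla F_\alpha|^2,
\]
with $\nabla F_\alpha,K_\alpha\in L^\infty$ and, crucially, $\partial^\beta(\nabla F_\alpha)=\partial^\beta K_\alpha\equiv 0$ for every $0<\beta\le\alpha$ (they depend only on $x_j$ with $j\notin Q_\alpha$). After rescaling, all coefficients are uniformly bounded on $B_{3N}(0,R)$, and you are exactly in the setting of the standard $C^{1,\theta}$ and $W^{2,p}$ interior estimates (Gilbarg--Trudinger Theorems 8.32 and 9.11), with constants independent of $\bx_0$ and $\mu$. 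The induction then proceeds as you outlined, but with \emph{bounded} zeroth- and first-order coefficients throughout, so no Hardy or Kato input is needed at all. Since $F_\alpha$ and $\nabla F_\alpha$ are bounded and $\partial^\alpha$-transparent, passing from $\psi_\alpha$ back to $\psi$ is harmless. This Jastrow-type substitution is the missing idea in your sketch; once you insert it, Steps~1--3 go through cleanly for every $p\in(1,\infty]$.
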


As a corollary of the case \(p=\infty\) we have the following
pointwise estimates, one of our main motivations for the  study of
these problems (for the proof, see Section~\ref{sec:proof-cor} below):
\begin{cor}\label{cor:main}
Let the notation and assumptions be as in Theorem~\ref{thm:main-one}
above and let 
\(\alpha\in\N_{0}^{3N}\), \(|\alpha|\ge 
1\), and \(R>0\). Then:
\begin{itemize}
\item[(i)]
There exists a constant \(C=C(\alpha,R, E)\) (depending also on \(N,
Z\)) such that
for all \({\bf x}\in \R^{3N}\setminus \Sigma^{\alpha}\),
\begin{align}\label{eq:pointwise}
  |\partial^{\alpha}\psi({\bf x})|
  \le C \,
  \lambda_{\alpha}({\bf x})^{1-|\alpha|}
  \|\psi\|_{L^{\infty}(B_{3N}({\bf x},R))}\,,
\end{align}
where \(\lambda_{\alpha}({\bf x})=\min\big\{1, d_{\alpha}({\bf
  x},\Sigma)\big\}\). 
\item[(ii)]
There exists a constant \(C=C(\alpha,R, E)\) (depending also on \(N,
Z\)) such that
for all \({\bf x}\in \R^{3N}\setminus \Sigma\),
\begin{align}\label{eq:pointwise-bis}
  |\partial^{\alpha}\psi({\bf x})|
  \le C \,
  \lambda({\bf x})^{1-|\alpha|}
  \|\psi\|_{L^{\infty}(B_{3N}({\bf x},R))}\,,
\end{align}
where \(\lambda({\bf x})=\min\big\{1, d({\bf
  x},\Sigma)\big\}\). 
\end{itemize}
\end{cor}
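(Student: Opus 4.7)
The plan is to deduce both parts of the corollary from the $p=\infty$ case of Theorem~\ref{thm:main-one} via two reductions: first, passing from an $L^{\infty}$-norm on a small ball on the left-hand side to the pointwise value at its centre; and second, eliminating the $\|\nabla\psi\|_{L^{\infty}}$-term on the right-hand side in favour of $\|\psi\|_{L^{\infty}}$ on a slightly larger ball.

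For Part~(i), I fix radii $0<r_{0}<R_{0}<1$ (to be chosen in terms of the given $R>0$; when $R<R_{0}$ I instead take $R_{0}=R$ and $r_{0}=R/2$, letting the constant absorb the dependence on $R$) and apply Theorem~\ref{thm:main-one}(i) at $\mathbf{x}\in\R^{3N}\setminus\Sigma^{\alpha}$ with $p=\infty$. To replace the left-hand $L^{\infty}$-norm on $B_{3N}(\mathbf{x},r_{0}\lambda_{\alpha}(\mathbf{x}))$ by the pointwise value $|\partial^{\alpha}\psi(\mathbf{x})|$ I use continuity of $\partial^{\alpha}\psi$ at $\mathbf{x}$: applying the theorem to every multi-index $\alpha+\beta$ with $Q_{\alpha+\beta}=Q_{\alpha}$ yields local boundedness of all $Q_{\alpha}$-derivatives of $\partial^{\alpha}\psi$ away from $\Sigma^{\alpha}$, and the $C^{0,1}_{\mathrm{loc}}$-regularity of $\psi$ from \cite{AHP} gives continuity across the remaining cusp planes in $\Sigma\setminus\Sigma^{\alpha}$. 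Since $\lambda_{\alpha}(\mathbf{x})\le 1$, the ball $B_{3N}(\mathbf{x},R_{0}\lambda_{\alpha}(\mathbf{x}))$ is contained in $B_{3N}(\mathbf{x},R)$, so $\|\psi\|_{L^{\infty}(B_{3N}(\mathbf{x},R_{0}\lambda_{\alpha}(\mathbf{x})))}\le\|\psi\|_{L^{\infty}(B_{3N}(\mathbf{x},R))}$.

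The remaining and main step is to bound $\|\nabla\psi\|_{L^{\infty}(B_{3N}(\mathbf{x},R_{0}\lambda_{\alpha}(\mathbf{x})))}$ by a multiple of $\|\psi\|_{L^{\infty}(B_{3N}(\mathbf{x},R))}$. For this I plan to apply Theorem~\ref{thm:main-one} again with $|\alpha'|=1$, pointwise at each $\mathbf{y}\in B_{3N}(\mathbf{x},R_{0}\lambda_{\alpha}(\mathbf{x}))\setminus\Sigma$ and for each of the $3N$ unit multi-indices $\alpha'$, and then absorb the $\|\nabla\psi\|_{L^{\infty}}$-term reappearing on the right-hand side through an iteration on a sequence of concentric balls with suitably chosen shrinking radii (a Morrey-type absorption argument), or equivalently by rescaling to the unit ball on which the rescaled Coulomb potential is bounded and invoking classical interior Schauder estimates. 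The set $\Sigma\cap B_{3N}(\mathbf{x},R_{0}\lambda_{\alpha}(\mathbf{x}))$ has measure zero and so does not affect the $L^{\infty}$-norm. Part~(ii) is proved identically, using Theorem~\ref{thm:main-one}(ii) in place of (i) with $\Sigma$ and $\lambda$ substituted for $\Sigma^{\alpha}$ and $\lambda_{\alpha}$. The main obstacle is this absorption step: the coefficient of $\|\nabla\psi\|_{L^{\infty}}$ in the $|\alpha'|=1$ case of the theorem is not a priori less than~$1$, so naive iteration does not converge and the absorption must be organised through a careful staging of radii based on the precise $(r,R)$-dependence of the constant $C(p,\alpha,r,R,E)$.
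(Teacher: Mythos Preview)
Your absorption step has a genuine gap, and the paper bypasses it entirely by invoking the existing \emph{a priori} gradient estimate from \cite[Theorem~1.2]{AHP}, restated in the appendix as Theorem~\ref{eq:AHP-apriori}: for every \(R>0\) there is \(C\) with \(\|\nabla\psi\|_{L^\infty(B_{3N}(\mathbf{x},R))}\le C\|\psi\|_{L^\infty(B_{3N}(\mathbf{x},2R))}\) for \emph{all} \(\mathbf{x}\in\R^{3N}\), including points of \(\Sigma\). With this in hand the corollary is a one-liner: \(|\partial^\alpha\psi(\mathbf{x})|\le\|\partial^\alpha\psi\|_{L^\infty(B_{3N}(\mathbf{x},\tfrac{R}{4}\lambda_\alpha))}\), then Theorem~\ref{thm:main-one}(i) with \(p=\infty\) and radii \((R/4,R/2)\) (using \(\lambda_\alpha\le1\) to enlarge the right-hand balls to fixed radius \(R/2\)), then Theorem~\ref{eq:AHP-apriori} with radii \((R/2,R)\). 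Part~(ii) follows from~(i) via \(d(\cdot,\Sigma)\le d_\alpha(\cdot,\Sigma)\) and \(\Sigma^\alpha\subseteq\Sigma\).

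Both of your proposed routes to the gradient bound fail as written. Reapplying Theorem~\ref{thm:main-one} with \(|\alpha'|=1\) produces exponent \(1-|\alpha'|=0\), so the inequality reads: \(\|\nabla\psi\|_{L^\infty}\) on a small ball is at most \(C\big(\|\psi\|_{L^\infty}+\|\nabla\psi\|_{L^\infty}\big)\) on a larger ball. Since \(\|\nabla\psi\|_{L^\infty}\) is monotone in the radius, this is never stronger than the trivial inclusion of balls; no staging of radii extracts information from it, whatever the \((r,R)\)-dependence of \(C\). Your Schauder alternative asserts that the rescaled Coulomb potential is bounded, but the ball \(B_{3N}(\mathbf{x},R_0\lambda_\alpha(\mathbf{x}))\) avoids only \(\Sigma^\alpha\), not \(\Sigma\), so \(V\) is still singular there and the rescaled coefficients are unbounded. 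What makes Theorem~\ref{eq:AHP-apriori} work is precisely the Jastrow substitution \(\psi=e^{\widetilde F}\widetilde\psi\) with the \emph{full} \(\widetilde F\) of \eqref{def:tilde-F}, which absorbs \emph{all} Coulomb singularities and leaves an equation with globally bounded coefficients to which interior \(C^{1,\theta}\)-estimates apply. You already cite \cite{AHP} for the \(C^{0,1}_{\rm loc}\)-regularity of \(\psi\); the gradient bound you need is Theorem~1.2 of that same reference.
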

If \(\psi\) is an eigenfunction of \(H\) (that is,
\(\psi\in W^{2,2}(\R^{3N})\) and \(\psi\)
satisfies \eqref{eq:eigen-bis} for some \(E\in\R\)), and \(\psi\) also
decays exponentially, then we have the following corollary to
Theorem~\ref{thm:main-one} (for the proof, see
Section~\ref{sec:proof-cor} below).
\begin{cor}\label{cor:exp} With the notation and assumptions as in
Theorem~\ref{thm:main-one}, assume \(\psi\) is an eigenfunction of
\(H\) (that is, \(\psi\in W^{2,2}(\R^{3N})\) and \(\psi\)
satisfies \eqref{eigen} for some \(E\in\R\)). Assume
furthermore that \(E\) and \(\psi\) are such that 
there exist constants \(C_0,c_{0}>0\) such that
\begin{align}
  \label{eq:exp-dec}
  |\psi({\bf x})|\leq C_0\, {\rm e}^{-c_{0}|{\bf x}|}\quad\text{ for
    all } {\bf x}\in\mathbb R^{3N}\,.
\end{align}

Then for all multiindices \(\alpha\in\N_{0}^{3N}\) with \(|\alpha|\ge
1\):
\begin{itemize}
\item[(i)]
There exist constants \(C_{\alpha}, c_{\alpha}>0\) such that,
for all \({\bf x}\in \R^{3N}\setminus \Sigma^{\alpha}\),
\begin{align}
  \label{eq:resultPSIb}
     \big|\partial^{\alpha}\psi({\bf x})\big|\le C_{\alpha} \,d_{\alpha}({\bf
       x},\Sigma)^{1-|\alpha|}{\rm e}^{-c_{\alpha}|{\bf x}|}\,.
\end{align}
\item[(ii)]
There exist constants \(C_{\alpha}, c_{\alpha}>0\) such that,
for all \({\bf x}\in \R^{3N}\setminus \Sigma\),
\begin{align}
  \label{eq:resultPSIc}
     \big|\partial^{\alpha}\psi({\bf x})\big|\le C_{\alpha} \,d({\bf
       x},\Sigma)^{1-|\alpha|}{\rm e}^{-c_{\alpha}|{\bf x}|}\,.
\end{align}
\item[(iii)]
\begin{align}\label{eq:Sob-a}
    d_{\alpha}(\,\cdot\,,\Sigma)^{|\alpha|-a}\partial^{\alpha}\psi \in
    L^{2}(\R^{3N}\setminus \Sigma^{\alpha})\ \text{ for all } a<\tfrac{5}{2}\,.
\end{align}
\item[(iv)]
\begin{align}\label{eq:Sob-bis}
  d(\,\cdot\,,\Sigma)^{|\alpha|-a}\partial^{\alpha}\psi \in 
    L^{2}(\R^{3N}\setminus \Sigma)\ \text{ for all } a<\tfrac{5}{2}\,.
\end{align}
\end{itemize}
\end{cor}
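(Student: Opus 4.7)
The plan is to combine the pointwise $L^{\infty}$ bounds of Corollary~\ref{cor:main} with the pointwise exponential decay \eqref{eq:exp-dec}, and then to integrate them in order to obtain the weighted $L^{2}$-statements in (iii) and (iv).

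For part (i), I would fix some convenient $R>0$ and apply Corollary~\ref{cor:main}(i), which gives
\[
  |\partial^{\alpha}\psi(\bx)|\le C\,\lambda_{\alpha}(\bx)^{1-|\alpha|}\,
  \|\psi\|_{L^{\infty}(B_{3N}(\bx,R))}.
\]
The triangle inequality applied to \eqref{eq:exp-dec} yields $\|\psi\|_{L^{\infty}(B_{3N}(\bx,R))}\le C_{0}\,e^{c_{0}R}\,e^{-c_{0}|\bx|}$, so it only remains to replace $\lambda_{\alpha}(\bx)^{1-|\alpha|}$ by $d_{\alpha}(\bx,\Sigma)^{1-|\alpha|}$ at the cost of slightly degrading the exponential rate. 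These quantities agree when $d_{\alpha}(\bx,\Sigma)\le 1$; when $d_{\alpha}(\bx,\Sigma)>1$, one has $|\bx|\ge d_{\alpha}(\bx,\Sigma)>1$, and hence
\[
  \lambda_{\alpha}(\bx)^{1-|\alpha|}=1
  \le d_{\alpha}(\bx,\Sigma)^{1-|\alpha|}|\bx|^{|\alpha|-1}
  \le C_{\alpha,\varepsilon}\,e^{\varepsilon|\bx|}\,
       d_{\alpha}(\bx,\Sigma)^{1-|\alpha|}
\]
for any $\varepsilon>0$. Choosing $c_{\alpha}:=c_{0}-\varepsilon>0$ then gives (i). Part (ii) is identical, using Corollary~\ref{cor:main}(ii) and replacing $d_{\alpha}$ by $d(\cdot,\Sigma)$ throughout.

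For (iii), I would square (i), reducing the statement to the finiteness of
\[
  I:=\int_{\R^{3N}}d_{\alpha}(\bx,\Sigma)^{2-2a}\,e^{-2c_{\alpha}|\bx|}\,d\bx
\]
for $a<5/2$. The set $\Sigma^{\alpha}$ is a finite union of affine subspaces $L_{1},\ldots,L_{M}\subset\R^{3N}$, each of codimension exactly $3$ (the diagonals $\{x_{k}=0\}$ and $\{x_{k}=x_{j}\}$ with $k\in Q_{\alpha}$). For $a>1$ one has $d_{\alpha}(\bx,\Sigma)^{2-2a}=\max_{j}d(\bx,L_{j})^{2-2a}\le\sum_{j=1}^{M}d(\bx,L_{j})^{2-2a}$, since $x\mapsto x^{2-2a}$ is decreasing; for $a\le 1$ the integrand is dominated by $|\bx|^{2-2a}e^{-2c_{\alpha}|\bx|}$ via $d_{\alpha}\le|\bx|$, which is trivially integrable. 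In the delicate range one decomposes $\bx=(\bx_{L_{j}},\by)$ with $\by\in L_{j}^{\perp}\cong\R^{3}$, and uses $|\bx|\ge\tfrac{1}{\sqrt{2}}(|\bx_{L_{j}}|+|\by|)$; then Fubini followed by polar coordinates on $\R^{3}$ gives
\[
  \int_{\R^{3N}}d(\bx,L_{j})^{2-2a}\,e^{-2c_{\alpha}|\bx|}\,d\bx
  \le C\int_{0}^{\infty}r^{2-2a}\cdot r^{2}\,e^{-\sqrt{2}\,c_{\alpha}r}\,dr,
\]
which is finite precisely when $a<5/2$. Part (iv) follows in exactly the same manner from (ii), since $\Sigma$ is likewise a finite union of codimension-$3$ affine subspaces.

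The only mildly delicate ingredient will be the codimension-$3$ Fubini computation in the last display: it pins down exactly why the critical exponent is $5/2$ and why no stronger weighted summability could be hoped for. Everything else is a direct bookkeeping exercise on top of the already-established Corollary~\ref{cor:main} and the decay hypothesis \eqref{eq:exp-dec}.
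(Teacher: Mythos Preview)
Your proposal is correct and follows essentially the same route as the paper. For (i)--(ii) you combine Corollary~\ref{cor:main} with the decay bound and absorb the polynomial factor $d_{\alpha}^{|\alpha|-1}\le|\bx|^{|\alpha|-1}$ into the exponential when $d_{\alpha}>1$; the paper does the same, splitting off $e^{-c_{0}|\bx|/2}$ where you introduce an arbitrary $\varepsilon$. For (iii)--(iv) your abstract formulation (``$\Sigma^{\alpha}$ is a finite union of codimension-$3$ linear subspaces, decompose orthogonally and Fubini'') is exactly what the paper does concretely: it bounds $d_{\alpha}^{2-2a}$ by a sum over the individual terms $|x_{j}|^{2-2a}$ and $(\tfrac{1}{\sqrt2}|x_{j}-x_{k}|)^{2-2a}$, and for the latter performs the orthogonal change $(y_{j},y_{k})=(x_{j}-x_{k},x_{j}+x_{k})/\sqrt2$ before separating variables. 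Your codimension-$3$ computation and the paper's explicit one are the same argument in different notation.
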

\begin{remark}\label{rem:L^2}
Of course, \(\Sigma^{\alpha}, \Sigma\subset\R^{3N}\) are of Lebesgue
measure zero, so \(L^{2}(\R^{3N}\setminus
\Sigma^{\alpha})=L^{2}(\R^{3N}\setminus\Sigma)=L^{2}(\R^{3N})\). However,
in \eqref{eq:Sob-a}--\eqref{eq:Sob-bis} we want to emphasize that the 
derivatives \(\partial^{\alpha}\psi\) are the {\it pointwise} (classical)
derivatives in \(\R^{3N}\setminus 
\Sigma\) and \(\R^{3N}\setminus\Sigma^{\alpha}\) (which exist), and
{\it not} weak derivatives in \(\R^{3N}\) (on which we have no 
statements). The same remark holds
for \eqref{eq:Sob-two}--\eqref{eq:Sob-three} below.
\end{remark}

Using Theorem~\ref{thm:main-one} with \(p=2\), we can prove the
following variant of Corollary~\ref{cor:exp} (iii)--(iv), which does
not assume any decay of \(\psi\) (apart from \(\psi\in
W^{2,2}(\R^{3N})\); for the proof, see Section~\ref{sec:proof-Nistor} below):
\begin{thm}\label{thm:Nistor}
Let the notation and assumptions be as in
  Theorem~\ref{thm:main-one}, and assume furthermore that
  \(\psi\in W^{2,2}(\R^{3N})\). 
Then, for all multiindices \(\alpha\in\N_{0}^{3N}\) with \(|\alpha|\ge
1\) we have
\begin{align}\label{eq:Sob-two}
    \lambda_{\alpha}^{|\alpha|-a}\partial^{\alpha}\psi \in
    L^{2}(\R^{3N}\setminus \Sigma^{\alpha})\ \text{ for all } a<\tfrac{5}{2}\,,
\end{align}
and
\begin{align}\label{eq:Sob-three}
    \lambda^{|\alpha|-a}\partial^{\alpha}\psi \in
    L^{2}(\R^{3N}\setminus \Sigma)\ \text{ for all } a<\tfrac{5}{2}\,,
\end{align}
where \(\lambda_{\alpha}({\bf x})=\min\big\{1, d_{\alpha}({\bf
  x},\Sigma)\big\}\) and
  \(\lambda({\bf x})=\min\big\{1, d({\bf
  x},\Sigma)\big\}\). 

In fact, for all  \(\alpha\in\N_{0}^{3N}\) with \(|\alpha|\ge
1\) there exists \(C_{\alpha}>0\) such that
\begin{align}\label{eq:Sob-apriori-1}
  \|\lambda_{\alpha}^{|\alpha|-a}\partial^{\alpha}\psi\|_{L^2(\R^{3N}\setminus
    \Sigma^{\alpha})}
  \le C_{\alpha}\|\psi\|_{W^{2,2}(\R^3N)}\,, \\  \label{eq:Sob-apriori-2}
  \|\lambda^{|\alpha|-a}\partial^{\alpha}\psi\|_{L^2(\R^{3N}\setminus
    \Sigma^{\alpha})}
  \le C_{\alpha}\|\psi\|_{W^{2,2}(\R^3N)}\,.
\end{align}
\end{thm}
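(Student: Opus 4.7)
\medskip

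\noindent\textbf{Proof plan.} The strategy is to combine the $p=2$ case of Theorem~\ref{thm:main-one}(i) with a Whitney-type cover of $\R^{3N}\setminus\Sigma^\alpha$, and to close the argument via a Hardy inequality for the codimension-three singular strata making up $\Sigma^\alpha$. Estimate \eqref{eq:Sob-two} will be an immediate consequence of \eqref{eq:Sob-apriori-1}; \eqref{eq:Sob-three} and \eqref{eq:Sob-apriori-2} will follow from their $\Sigma^\alpha$-counterparts via the pointwise inequality $\lambda\le\lambda_\alpha$ and the inclusion $\Sigma^\alpha\subseteq\Sigma$.

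\smallskip

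\noindent First I will construct a Whitney family. Since $\lambda_\alpha$ is $1$-Lipschitz on $\R^{3N}\setminus\Sigma^\alpha$, a standard Vitali argument yields a countable set $\{\mathbf{x}_n\}\subset\R^{3N}\setminus\Sigma^\alpha$ for which the balls $B_n:=B_{3N}(\mathbf{x}_n,r\lambda_\alpha(\mathbf{x}_n))$ cover $\R^{3N}\setminus\Sigma^\alpha$, their enlargements $\tilde B_n:=B_{3N}(\mathbf{x}_n,R\lambda_\alpha(\mathbf{x}_n))$ have uniformly bounded overlap, and $\lambda_\alpha(\mathbf{y})\sim\lambda_\alpha(\mathbf{x}_n)$ on $\tilde B_n$. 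Applying Theorem~\ref{thm:main-one}(i) with $p=2$ on each $B_n$, multiplying through by the weight $\lambda_\alpha(\mathbf{y})^{2(|\alpha|-a)}\sim\lambda_\alpha(\mathbf{x}_n)^{2(|\alpha|-a)}$, integrating and summing over $n$ using finite overlap will produce
\begin{align*}
\bigl\|\lambda_\alpha^{|\alpha|-a}\partial^\alpha\psi\bigr\|_{L^2(\R^{3N}\setminus\Sigma^\alpha)}^2\le C\int_{\R^{3N}}\lambda_\alpha(\mathbf{x})^{2(1-a)}\bigl(|\psi(\mathbf{x})|^2+|\nabla\psi(\mathbf{x})|^2\bigr)\,d\mathbf{x}.
\end{align*}
Crucially, the weight exponent on the right no longer depends on $|\alpha|$, so a single Hardy-type bound will then handle every multi-index at once.

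\smallskip

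\noindent It remains to bound the right-hand side by $C\|\psi\|_{W^{2,2}}^2$. I split at $\{\lambda_\alpha=1\}$ (where the weight is $\equiv 1$, giving a contribution $\le\|\psi\|_{W^{1,2}}^2$) and $\{\lambda_\alpha<1\}$ (where $\lambda_\alpha=d_\alpha$). Writing $\Sigma^\alpha$ as the finite union of codimension-three affine subspaces $L\subset\R^{3N}$ (of the form $\{x_k=0\}$ or $\{x_k=x_j\}$), and using $d_\alpha^{-s}\le C\sum_L d(\cdot,L)^{-s}$, the task reduces, for each such $L$, to estimating $\int d(\cdot,L)^{-2(a-1)}|\psi|^2$ and $\int d(\cdot,L)^{-2(a-1)}|\nabla\psi|^2$ by $\|\psi\|_{W^{2,2}}^2$. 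The $|\psi|^2$-piece follows immediately from the codimension-three fractional Hardy inequality $\int d(\cdot,L)^{-2\sigma}|u|^2\le C\|u\|_{H^\sigma}^2$ (valid for $0\le\sigma<3/2$) applied with $u=\psi$ and $\sigma=a-1$. The hard part will be the $|\nabla\psi|^2$-piece for $a\in(2,5/2)$: direct fractional Hardy only reaches $\sigma\le 1$ (i.e.\ $a\le 2$) because $\psi\in W^{2,2}$ only gives $\nabla\psi\in H^1$. For this range I plan to iterate classical Hardy once down to the level of $\nabla^2\psi\in L^2$, and then to absorb the residual weighted $\nabla^2\psi$-integral by invoking the eigenvalue equation $\Delta\psi=(V-E)\psi$: this trades $\nabla^2\psi$ (via weighted elliptic regularity) against $\psi$ and $V\psi$, whose $d^{-1}$-type singularities at $\Sigma^\alpha$ are tame under one further Hardy. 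The threshold $a<5/2=1+3/2$ is exactly the critical exponent for codimension-three fractional Hardy, equivalently the local integrability threshold of $d_\alpha^{2(1-a)}$ in a tubular neighbourhood of a codim-three subspace.
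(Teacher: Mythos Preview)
Your Whitney-cover reduction to the inequality
\(\|\lambda_\alpha^{|\alpha|-a}\partial^\alpha\psi\|_{L^2}^2\le C\int_{\R^{3N}}\lambda_\alpha^{2-2a}\big(|\psi|^2+|\nabla\psi|^2\big)\)
is correct and coincides with what the paper does (the paper implements the cover via a characteristic-function swap, \eqref{eq:slow metric}--\eqref{eq:change charac}, rather than an explicit Vitali family, but the content is identical). The divergence is in how the right-hand side is controlled for \(a\in(2,5/2)\). The paper does \emph{not} use fractional Hardy or weighted Calder\'on--Zygmund: it invokes Proposition~\ref{prop:our a priori}, the \(L^\infty\)--\(L^2\) a~priori bound
\(\|\psi\|_{L^\infty(B_r)}+\|\nabla\psi\|_{L^\infty(B_r)}\le C\|\psi\|_{L^2(B_R)}\)
on balls of \emph{fixed} radius (this uses the equation a second time, via an elliptic bootstrap). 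One then pulls \(|\psi|^2+|\nabla\psi|^2\) out in \(L^\infty\) on each three-dimensional slice transverse to a single plane \(L\subset\Sigma^\alpha\), and the bare weight \(|x_j|^{2-2a}\) (or \(|x_j-x_k|^{2-2a}\)) is integrated alone over a ball in \(\R^3\), finite exactly for \(a<5/2\); see \eqref{eq:Nistor-5}--\eqref{eq:Nistor-8}. This is shorter than your route and explains the threshold \(5/2\) as pure local integrability in the three transverse directions.

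Your alternative can be pushed through, but two steps are more delicate than your sketch suggests. First, \(V\) is singular on all of \(\Sigma\), not only on \(\Sigma^\alpha\); after you write \(\Delta\psi=(V-E)\psi\), the term \(\int d(\cdot,L)^{4-2a}|V\psi|^2\) contains cross-terms \(\int d(\cdot,L)^{4-2a}\,d(\cdot,L')^{-2}|\psi|^2\) for \emph{every} plane \(L'\subset\Sigma\), including those not in \(\Sigma^\alpha\). These can indeed be handled (split on \(\{d(\cdot,L)\lessgtr d(\cdot,L')\}\) and apply fractional Hardy with exponent \(a-1<3/2\) to the smaller distance), but this is not ``one further Hardy'' and should be spelled out. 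Second, going from \(\int d(\cdot,L)^{4-2a}|\nabla^2\psi|^2\) to \(\int d(\cdot,L)^{4-2a}|\Delta\psi|^2\) is a genuine \(A_2\)-weighted Calder\'on--Zygmund estimate (boundedness of second-order Riesz transforms on \(L^2_w\)); you need to check that \(d(\cdot,L)^{4-2a}\in A_2\) (it is, since \(4-2a\in(-1,0)\subset(-3,3)\) for a codimension-\(3\) subspace) and that \(\psi\in W^{2,2}\) together with \((V-E)\psi\in L^2_w\) is enough to justify \(\partial_i\partial_j\psi=R_{ij}(\Delta\psi)\) in \(L^2_w\). The paper's route via Proposition~\ref{prop:our a priori} avoids both issues entirely.
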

We now give some remarks on the results stated above.
\begin{remark}\label{rem:1}
  \(\, \)
\begin{enumerate}
\item[\rm (i)] For \(\alpha\in\N_{0}^{3N}\) with \(|\alpha|=1\),
  \eqref{eq:resultPSIb}--\eqref{eq:resultPSIc} 
  was proved in \cite[Theorem~1.2; Remark~1.9]{AHP}.  
\item[\rm (ii)] 
The ground state function  
of Hydrogen (that is, of the
  operator \(-\Delta_{x}-\tfrac{1}{|x|}\), \(x\in\R^3\), \(N=1\)) is
\(\phi_{0}(x)=c_{0}{\rm e}^{-|x|/2}\). In this case
\(\Sigma=\{0\}\subset\R^{3}\) and \(d(x,\Sigma)=|x|\). This example shows that
the results in Theorem~\ref{thm:main-one}, Corollary~\ref{cor:main},
Corollary~\ref{cor:exp}, and Theorem~\ref{thm:Nistor} 
are both natural and optimal.
\item[\rm (iii)]  As will be clear from the proofs,
  Theorem~\ref{thm:main-one}, Corollary~\ref{cor:main},
  Corollary~\ref{cor:exp}, and 
  Theorem~\ref{thm:Nistor} 
  generalize to the case of mole\-cules (i.e., \(K\) nuclei with positive
  charges \(Z_1,\ldots,Z_K>0\), fixed at
  positions \(R_1,\ldots, R_K\) in \(\R^3\)) in the obvious way. In
  this case (compare with \eqref{H}),
\begin{align}\label{V-molecule}
   V({\bf x}) = \sum_{j=1}^{N}\sum_{k=1}^{K}{}\Big( -\frac{Z_k}{x_j-R_k}\Big) 
  +\sum_{1\le i<j\le N}\frac{1}{|x_i-x_j|}\,.
\end{align}
\item[\rm (iv)] The local version (i.e., without the exponential
  decay) of \eqref{eq:resultPSIc} for \(N=1\) was already known: It follows
  immediately from \cite[Theorem~1.1]{KS} (also in the case of several
  nuclei). In fact, more generally, for any \(N\) the
  local version of 
  \eqref{eq:resultPSIb} for points \({\bf x}\) in a small
  neighbourhood of so-called `two-particle coalescence points' follows
  from \cite[Theorem~1.4]{KS}.
\item[\rm (v)]
For references on the exponential decay of eigenfunctions (i.e.,
\eqref{eq:exp-dec}), see e.g.\ 
Froese and Herbst~\cite{froese-herbst} and
Simon~\cite{Si-semi, Simon_notes}. Exponential decay is known to hold for
any eigenfunction \(\psi\) associated to an
eigenvalue \(E\)
which is not a so-called `thres\-hold energy'. This includes (but is
not restricted to) any
eigenvalue below the essential spectrum in any symmetry subspace, for
instance, the fermionic ground state energy.
\end{enumerate}
\end{remark}
\begin{remark}\label{rem:2}
As seen from the example in Remark~\ref{rem:1}(ii), the
  natural critical value for \(a\) in
  \eqref{eq:Sob-a}--\eqref{eq:Sob-three} is \(5/2\), as long as
  \(|\alpha|\ge1\). Hence, the result is optimal.
However, for \(\alpha=0\), one has
  \eqref{eq:Sob-a}--\eqref{eq:Sob-three} only for all \(a<3/2\); this
  follows from \eqref{eq:apriori_rho_coulomb}
  in
  Proposition~\ref{prop:AprioriEstimatesOnRho} below. The example in
  Remark~\ref{rem:1}(ii) again shows that this is optimal.

  The statements in
  \eqref{eq:Sob-a}--\eqref{eq:Sob-three} can be re-formulated in terms of
  certain weight\-ed Sobolev-spaces: Define the following spaces (called
  `Babu\v{s}ka-Kondratiev' spaces in \cite{NistorEtAl}; recall that
  \(\lambda({\bf x})=\min\{1, d({\bf 
  x},\Sigma)\}\)):
\begin{align}\label{def:weighted-Sob}
   \mathcal{K}_{a}^{m}\big(&\R^{3N}\setminus\Sigma, \lambda\big)
     \nonumber \\&=\big\{ u:\R^{3N}\to \C \,\big|\,
     \lambda^{|\alpha|-a}\partial^{\alpha}u\in
     L^{2}(\R^{3N}\setminus \Sigma)\,, |\alpha|\le m\big\}\,. 
\end{align}
Then it follows from Theorem~\ref{thm:Nistor}, and the remark above,
that any eigenfunction \(\psi\in W^{2,2}(\R^{3N})\) of the operator
\(H\) in \eqref{H}  
belongs to \(\mathcal{K}_{a}^{m}\big(\R^{3N}\setminus \Sigma,
\lambda\big)\) for any \(m\in\N\) and any
\(a<3/2\). However, Theorem~\ref{thm:Nistor} gives much more, since the
restriction on \(a\) is only due to the case \(\alpha=0\): It also
follows that, for any \(|\alpha|=1\), also \(\partial^{\alpha}\psi\)
belongs to \(\mathcal{K}_{a}^{m}\big(\R^{3N}\setminus \Sigma, 
\lambda\big)\) for any \(m\in\N\) and any
\(a<3/2\). The example in
  Remark~\ref{rem:1}(ii) again gives optimality.

In the case of exponentially decaying eigenfunctions,
Corollary~\ref{cor:exp} gives the same statements, but with
\(\mathcal{K}_{a}^{m}\big(\R^{3N}\setminus \Sigma,  
\lambda\big)\) replaced by \\
\(\mathcal{K}_{a}^{m}\big(\R^{3N}\setminus \Sigma, 
d(\,\cdot\,,\Sigma)\big)\). It is natural that without any additional decay
assumptions on \(\psi\), one can only expect to get this type of result with a
`regularised distance function' like \(\lambda({\bf x})=\min\big\{1, d({\bf
  x},\Sigma)\big\}\). 
This vastly improves and clarifies the results proved and the conjectures
  stated in \cite{NistorEtAl} (which were also for a regularised
  distance function). (See also Remark~\ref{rem:1}(iv) above.) 

  Note that the
  results in \cite{NistorEtAl} are stated for slightly more general
  potentials \(V\) than the one in \eqref{H}: Let 
  \begin{align}\label{eq:general-V}
    W({\bf x})=\sum_{j=1}^N \frac{b_j(\tfrac{x_j}{|x_j|})}{|x_j|}
    +\sum_{1\le i<j\le N}\frac{c_{ij}(\tfrac{x_i-x_j}{|x_i-x_j|})}{|x_i-x_j|}\,,
  \end{align}
  with \(b_j,c_{ij}\in C^{\infty}(\mathbb{S}^{2})\)
  (with \(\mathbb{S}^{2}\) the unit sphere in \(\R^3\)). Then all our results
  hold with the operator \(H\) in \eqref{H} replaced with
  \({}-\Delta+W\). For 
  simplicity of the presentation, we
  have chosen to stick to the physically most relevant case of atoms
  and molecules, as in \eqref{H}.

For another approach, via a singular pseudo-differential operator
calculus, giving a parametrix for the resolvent of \(H\) in \eqref{H} in the case
of Hydrogen (\(N=1\)) \cite{Flad-1} and Helium (\(N=2\)) \cite{Flad-2} with the correct
asymptotic behaviour at {\it two}-particle coalescence points, see
\cite{Flad-3}. 
\end{remark}

An important quantity derived from any eigenfunction \(\psi\) of the operator
\(H\) in \eqref{H} is its associated one-electron density \(\rho\) defined by
\begin{align}\label{rho}
  \rho(x)\equiv\rho_{\psi}(x)=&\sum_{j=1}^N\rho_j(x)=
  \sum_{j=1}^N
  \int_{\mathbb R^{3N-3}}|\psi(x,\hat{\bf{x}}_j)|^2
  d\hat{\bf{x}}_j\,,
\end{align}
where we have introduced the notation
\begin{align}\label{notation: hat x}
  \hat{\bf{x}}_j&=(x_1,\dots,x_{j-1}, x_{j+1},\dots, x_N)
  \intertext{ and }
  d\hat{\bf{x}}_j&=
  dx_1\dots dx_{j-1} dx_{j+1}\dots dx_N\,.
  \label{notation:dx}
\end{align} 
By abuse of notation, we identify \((x_1,\dots,x_{j-1},x,
x_{j+1},\dots, x_N)\) and  \((x,\hat{\bf x}_j)\). 

The regularity properties of \(\rho\) at the origin (or,
more generally, at the positions of the nuclei, when studying a
molecule, see Remark~\ref{rem:1}(iii) above) have been studied recently in
\cite{non-iso,thirdder} (see also \cite{Bingel,AHP,Steiner}). In
\cite{analytic} it was proved that  
\(\rho\) is real analytic away from the position of the nucleus (i.e., \(\rho\in
C^{\omega}(\R^{3}\setminus\{0\})\)); for another recent 
proof of this, see \cite{Jecko} (see also \cite{rho-smooth,Taxco}).
(This result is known as the {\it Holographic Electron Density
  Theorem} (HEDT) in Quantum Chemistry; see \cite{Mezey}.)

Our main result on \(\rho\) in this paper is the following.

\begin{thm}\label{thm:main-rho}
Let \(H\) be the non-relativistic Hamiltionian given by \eqref{H}, and
assume that \(\psi \in W^{2,2}(\R^{3N})\) satisfies, for some
\(E\in\R\), 
\begin{align}
  \label{eq:eigen-bis-2}
  H\psi=E\psi\,.
\end{align}
Define the associated one-electron density \(\rho\) as in \eqref{rho}.

Then, for all multiindices \(\alpha\in\N_{0}^{3}\) with \(|\alpha|\ge
1\):

\begin{itemize}
\item[(i)] For all \(R>0\) there exists a constant \(C_{\alpha}(R)>0\) such that 
\begin{align}\label{est:pointwise-rho}
  |\partial^{\alpha}\rho(x)|\le C_{\alpha}(R)\,
  r(x)^{1-|\alpha|}\int_{B_3(x,R)}\!\!\!\!\!\!\!\!
  \rho(y)\,dy\ \text{ for all }
  x\in\R^{3}\setminus\{0\}\,, 
\end{align}
where \(r(x):=\min\{1,|x|\}\), \(x\in\R^3\). \\\noindent
In particular, \(r^{|\alpha|-a}\partial^{\alpha}\rho\in
L^{\infty}(\R^{3}\setminus\{0\})\) for all \(a\in[0,1]\), with   
\begin{align}\label{eq:K-space-rho}
  \|r^{|\alpha|-a}\partial^{\alpha}\rho\|_{L^{\infty}(\R^{3}\setminus\{0\})}\le
  C_{\alpha}\|\rho\|_{L^1(\R^{3})}=C_{\alpha}\|\psi\|_{L^2(\R^{3N})}^2\,. 
\end{align} 
\item[(ii)] Furthermore, 
for all \(p\in [0,\infty)\) and all
  \(a\in[0,\tfrac{p+3}{p})\), there exists a constant
  \(C_{\alpha}(a,p)>0\) such that 
\begin{align}\label{eq:rho-Babuschka-final}
  \|r^{|\alpha|-a}\partial^{\alpha}\rho\|_{L^{p}(\R^{3}\setminus\{0\})}
  \le C_{\alpha}(a,p)\|\rho\|_{L^1(\R^{3})}\,.
\end{align}
In particular, \(r^{|\alpha|-a}\partial^{\alpha}\rho\in
L^{p}(\R^{3}\setminus\{0\})\) for all \(p\in[1,\infty)\) and all
\(a\in[0,\tfrac{p+3}{p})\). 
\item[(iii)] Under the decay assumption
\eqref{eq:exp-dec}, \(r(x)\) can be replaced with \(|x|\) above:
 \(|\cdot|^{|\alpha|-a}\partial^{\alpha}\rho\in
L^{p}(\R^{3}\setminus\{0\})\) for all \(p\in[1,\infty)\) and all
\(a\in[0,\tfrac{p+3}{p})\), and all \(a\in[0,1]\) for \(p=\infty\).
In fact,
if we assume exponential decay of \(\psi\) (i.e., there exist constants
\(C_0,c_{0}>0\) such 
that \eqref{eq:exp-dec} holds), then for all multiindices
\(\alpha\in\N_{0}^{3}\) with \(|\alpha|\ge 
1\) there exist constants \(C_{\alpha},c_{\alpha}>0\) such that
\begin{align}\label{eq:resultRHO}
  |\partial^{\alpha}\rho(x)|
  \le C_{\alpha} |x|^{1-|\alpha|}
  {\rm e}^{-c_{\alpha}|x|}
\end{align}
for all $x \in \R^{3}\setminus\{0\}$.
\end{itemize}
\end{thm}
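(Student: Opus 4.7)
The plan is to prove (i) by differentiating under the integral sign and invoking the derivative estimates on \(\psi\) from Theorem~\ref{thm:main-one} and Corollary~\ref{cor:main}, and then to deduce (ii) and (iii) from (i). For (i), I would write \(\rho=\sum_{j=1}^{N}\rho_j\) with \(\rho_j(y)=\int_{\R^{3N-3}}|\psi(y,\hat{\bf x}_j)|^{2}\,d\hat{\bf x}_j\), differentiate under the integral, and expand via Leibniz:
\[
\partial^{\alpha}\rho_j(x)=\sum_{\beta\le\alpha}\binom{\alpha}{\beta}\int_{\R^{3N-3}}(\partial^{\beta}_{x_j}\psi)(x,\hat{\bf x}_j)\,\overline{(\partial^{\alpha-\beta}_{x_j}\psi)(x,\hat{\bf x}_j)}\,d\hat{\bf x}_j.
\]
Viewing \(\beta\) as a \(3N\)-multi-index concentrated in slot \(j\), we have \(Q_{\beta}=\{j\}\) in the notation of \eqref{def:Q-alpha}, so \(\Sigma^{\beta}=\Sigma^{\{j\}}\), and \eqref{eq:d-alpha} gives \(d_{\beta}((x,\hat{\bf x}_j),\Sigma)=\min\{|x|,\tfrac{1}{\sqrt{2}}\min_{k\neq j}|x-x_k|\}\le r(x)\); this is the source of the announced \(r(x)^{1-|\alpha|}\) scaling. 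The plan is to integrate the Leibniz identity over a small ball \(B_3(x,\delta)\) with \(\delta\asymp r(x)\), apply Cauchy--Schwarz in \(\hat{\bf x}_j\), and use Theorem~\ref{thm:main-one}(i) with \(p=2\) to bound \(\|\partial^{\gamma}_{x_j}\psi\|_{L^{2}(B_3(x,\delta)\times\R^{3N-3})}\) via a covering of \(\R^{3N-3}\) by balls of radius \(\asymp\lambda_{\gamma}\) at each dyadic scale. The slab \(L^{2}\)-norm of \(\psi\) equals \(\int_{B_3(x,\delta)\times\R^{3N-3}}|\psi|^{2}=\int_{B_3(x,\delta)}\rho_j\le\int_{B_3(x,R)}\rho\), while \(\|\nabla\psi\|_{L^{2}}\) on the slab is controlled by \(\|\psi\|_{L^{2}}\) on a larger slab via a Caccioppoli-type estimate from \(H\psi=E\psi\). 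A scaled interior estimate for the real analytic function \(\rho\) on \(B_3(x,\delta)\) (which avoids the origin) then converts the resulting averaged bound into the pointwise bound \eqref{est:pointwise-rho}; the \(L^{\infty}\)-statement \eqref{eq:K-space-rho} follows from \(\int_{B(x,R)}\rho\le\|\rho\|_{L^{1}}\) and \(r(x)^{1-a}\le 1\) for \(a\in[0,1]\).

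For (ii) and (iii), I would argue as follows. The pointwise bound \eqref{est:pointwise-rho} reads \(r(x)^{|\alpha|-a}|\partial^{\alpha}\rho(x)|\le C r(x)^{1-a}(\rho*\chi_{B(0,R)})(x)\). By Young's inequality, \(\rho*\chi_{B(0,R)}\in L^{p}(\R^{3})\) for every \(p\in[1,\infty]\) with norm \(\le C(R,p)\|\rho\|_{L^{1}}\). Splitting \(\R^{3}=\{|x|\le 1\}\cup\{|x|\ge 1\}\), the weight \(r(x)^{p(1-a)}\) is integrable at the origin precisely when \(a<(p+3)/p\), and equals \(1\) outside the unit ball where the global \(L^{p}\)-bound on the convolution closes the estimate; this gives \eqref{eq:rho-Babuschka-final}. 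For (iii), exponential decay of \(\psi\) propagates to \(\rho\) via \(|{\bf x}|\ge|x|\) and integration of \(e^{-2c_{0}|{\bf x}|}\), yielding \(\int_{B(x,R)}\rho\le C e^{-c'|x|}\); substituting into the pointwise bound from (i) gives \eqref{eq:resultRHO}, and \(r(x)\) may be replaced by \(|x|\) since the polynomial factor \(|x|^{|\alpha|-1}\) is absorbed into \(e^{-c_{\alpha}|x|}\) for \(|x|\ge 1\).

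The principal technical obstacle is the covering argument underlying (i). Used as a pointwise bound, Corollary~\ref{cor:main}(i) gives \(|\partial^{\gamma}_{x_j}\psi|\le C\lambda_{\{j\}}^{1-|\gamma|}\|\psi\|_{L^{\infty}}\), and the resulting integrand in \(\hat{\bf x}_j\) behaves like \(|x-x_k|^{2-|\alpha|}\) (or \(|x-x_k|^{1-|\alpha|}\) for the boundary Leibniz terms), which fails to be locally integrable on \(\R^{3N-3}\) once \(|\alpha|\) becomes large. Passing from the \(L^{\infty}\)- to the \(L^{2}\)-version of Theorem~\ref{thm:main-one}(i), and combining it with a dyadic covering of \(\R^{3N-3}\) by balls of radii comparable to \(\lambda_{\gamma}\), converts the divergent pointwise weight into a convergent telescoping sum of local \(L^{2}\)-norms of \(\psi\) and \(\nabla\psi\). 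The Caccioppoli estimate and careful bookkeeping of scales are what ultimately produce the final bound as a constant multiple of \(\int_{B(x,R)}\rho\), with the correct \(r(x)^{1-|\alpha|}\) prefactor.
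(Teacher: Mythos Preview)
Your arguments for (ii) and (iii), given (i), are essentially correct and match the paper's. The real issue is part (i), where there is a genuine gap.

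You observe that for \(\beta\) concentrated in slot \(j\), \(d_{\beta}((x,\hat{\bf x}_j),\Sigma)=\min\{|x|,\tfrac{1}{\sqrt2}\min_{k\neq j}|x-x_k|\}\) and conclude that this ``is the source of the announced \(r(x)^{1-|\alpha|}\) scaling.'' But the inequality \(\lambda_{\beta}\le r(x)\) goes the \emph{wrong} way: since \(1-|\gamma|\le 0\), one has \(\lambda_{\beta}^{1-|\gamma|}\ge r(x)^{1-|\gamma|}\), so Corollary~\ref{cor:main}(i) does \emph{not} yield a bound by \(r(x)^{1-|\gamma|}\). The obstruction is precisely the electron--electron coalescence \(x_k\to x\) in the integration variable, which you correctly flag. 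However, the proposed cure---passing to the \(L^2\) version of Theorem~\ref{thm:main-one}(i) and covering \(\R^{3N-3}\) dyadically---does not close. The \(L^2\) estimate still carries the factor \(\lambda_{\gamma}^{1-|\gamma|}\); at scale \(\lambda\sim 2^{-m}\) near \(\{x_k=x\}\) (a codimension-\(3\) set) one picks up \(2^{m(2|\gamma|-2)}\) against an available \(\|\psi\|_{L^2}^2\)-mass of order \(2^{-3m}\), so the dyadic sum behaves like \(\sum_m 2^{m(2|\gamma|-5)}\). This diverges once \(|\gamma|\ge 3\), and no Leibniz split avoids a factor with \(|\gamma|\ge 3\) when \(|\alpha|\ge 5\). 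There is no telescoping here; the weight is genuinely non-integrable.

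The paper circumvents this by a different mechanism: the ``parallel derivative'' estimate of Proposition~\ref{prop:main-three} combined with the partition of unity of Appendix~\ref{sec:partition}. One writes \(\rho=\sum_I\rho_I\) with \(\chi_I\) supported where a specific cluster \(Q\ni 1\) of electrons is close to \(x_1\) and the others are far. After the change of variables \(y_j=x_j-x_1\) for \(j\in Q\), the operator \(\partial_{x_1}\) acting on \(\psi(x_1,x_1+y_2,\ldots)\) becomes (up to a constant) \(\partial_{x_Q}\), the centre-of-mass derivative, which \emph{annihilates} the intra-cluster potentials \(|x_j-x_k|^{-1}\), \(j,k\in Q\). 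The relevant singular set is then \(\Sigma_Q\) (see \eqref{def:Sigma-parallel}), which excludes the dangerous collisions \(x_k=x_1\) for \(k\in Q\); and on \(\supp\chi_I\), Lemma~\ref{partition:control} gives \(\lambda_Q({\bf x})\gtrsim r(x_1)\), so Proposition~\ref{prop:main-three} yields the correct \(r(x_1)^{1-|\alpha|}\) with the inequality now going the right way. The remaining integrals are then controlled by Proposition~\ref{prop:AprioriEstimatesOnRho}. This cluster/parallel-derivative idea is the missing ingredient in your plan.
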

\begin{remark}\label{rem:3}
  \(\, \)
\begin{enumerate}
\item[\rm (i)]
Again, the example in Remark~\ref{rem:1}(ii) above (for which
\(\rho(x)=c_{0}^2{\rm e}^{-|x|}\)) shows that
the results in Theorem~\ref{thm:main-rho} are both natural and optimal.
\item[\rm (ii)]
As will be clear from the proof, also this result 
  generalizes to the case of molecules (see Remark~\ref{rem:1}(iii) above)
  in the obvious way. 
\item[\rm (iii)] The corresponding local version of
  \eqref{eq:resultRHO} near \(x=0\) for the case of the one-electron density of
  Hartree-Fock states (i.e., Slater-deter\-minants of solutions to the
  Hartree-Fock equations) follows from \cite[Corollary~1.5]{ks-hf}.
  It says that in this case 
 there exist \(\varepsilon>0\) 
and real analytic  functions 
  \(\rho_{1}, \rho_{2}:B_3(0,\varepsilon)\to \R\) 
(i.e., \(\rho_{1}, \rho_{2}\in
C^{\omega}(B_3(0,\varepsilon))\)), such that
\begin{align}
  \label{eq:density-analytic}
  \rho(x)=\rho_{1}(x)+|x|\,\rho_{2}(x)
  \ \text{ for all }\ x\in B_3(0,\varepsilon)\,.
\end{align}
See also \cite{Flad-et-al-1, Flad-et-al-2} for related work.
It would be interesting to determine whether 
the same result holds in the (present) Schr{\"o}dinger case (recall
that then \(\rho\in 
C^{\omega}(\R^{3}\setminus\{0\})\)).
Note that, as for Har\-tree-Fock, \eqref{eq:resultRHO} (near \(x=0\))
would follow from such a 
result. 
\item[\rm (iv)] 
The statements in \eqref{eq:K-space-rho}--\eqref{eq:rho-Babuschka-final}
can again be re-formulated in terms of weight\-ed Sobolev-spaces (see
also Remark~\ref{rem:2} above): 
Define the following spaces (recall that 
  \(r(x)=\min\{1, |x|\}\)):
\begin{align}\label{def:weighted-Sob-bis}
   \mathcal{K}_{a}^{m,p}\big(&\R^{3}\setminus\{0\}, r \big)
     \nonumber \\&=\big\{ f:\R^{3}\to \C \,\big|\,
     r^{|\alpha|-a}\partial^{\alpha}f\in
     L^{p}(\R^{3}\setminus\{0\})\,, |\alpha|\le m\big\}\,. 
\end{align}
Then it follows from Theorem~\ref{thm:main-rho}
that for the electron density \(\rho\) (given by \eqref{rho})
of any eigenfunction \(\psi\in W^{2,2}(\R^{3N})\) of the operator
\(H\) in \eqref{H}, and any \(|\alpha|=1\), \(\partial^{\alpha}\rho\)  
 belongs to 
\(\mathcal{K}_{a}^{m,p}\big(\R^{3}\setminus \{0\}, r\big)\) for every \(m\in\N\), for any
 \(a\in[0,1]\) if \(p=\infty\), and any \(a\in[0,\tfrac{p+3}{p})\) if \(p\in[1,\infty)\).
\item[\rm (v)] 
For precise information on the behaviour at infinity of \(\rho\) itself 
(f.ex., similar to \eqref{eq:resultRHO}, but for \(\alpha=0\)), see 
\cite{H-O-Ahlrichs-Morgan-2, H-O-2-schr-ineq,H-O-Ahlrichs-Morgan-1}.
\end{enumerate}
\end{remark}

An important ingredient in the proof of Theorem~\ref{thm:main-rho} is
an estimate on derivatives of \(\psi\) along certain singularities of
\(V\) (`parallel derivatives'; see also
\cite[Proposition~2]{rho-smooth}, \cite[Lemma~2.2]{Taxco}, 
\cite[Lemma~3.1]{analytic}). Since this estimate is interesting in itself, we
formulate it here.

First we need some additional notation. For 
\(Q\subset\{1,\ldots,N\}\), \(Q\neq\emptyset\), define the
(`centre of mass') coordinate \(x_{Q}\in\R^{3}\) by
 \begin{align}\label{def:x_Q}
   x_Q:=\frac{1}{\sqrt{|Q|}}\sum_{j\in Q}x_j\,.
 \end{align} 
We now define
\(\partial_{x_Q}^{e_s} f\), \(s=1,2,3\), for a function \(f \in C^1({\mathbb
  R}^{3N})\) and \(e_{s}\) the canonical unit vectors in \(\R^{3}\). 
For the given \(Q\) and \(s\), let \({\mathbf v}=(v_1,\ldots,v_N) \in
{\mathbb R}^{3N}\) with \(v_j = 0\) for \(j \notin Q\), and \(v_j =
e_s/\sqrt{|Q|}\) for \(j 
\in Q\).
Then we define
\begin{align}\label{def:der-x_q}
  \partial_{x_Q}^{e_s} f({\bf x}): = \nabla f({\bf x}) \cdot {\mathbf
    v}=\frac{1}{\sqrt{|Q|}}\sum_{j\in Q}\frac{\partial f}{\partial
    x_{j,s}}({\bf x})
  =\Big(\frac{1}{\sqrt{|Q|}}\sum_{j\in Q}\partial_{x_{j,s}} f\Big)({\bf
    x})\,.
\end{align}
The definition of \(\partial_{x_Q}^{\alpha}\) then follows
by iteration
for any \(\alpha=(\alpha_1,\alpha_2,\alpha_2) \in {\mathbb N}^3_{0}\):
\begin{align}\label{def:d-x-Q}
  \partial_{x_Q}^{\alpha}f
  =\Big[\prod_{s=1}^{3}
  \Big(\frac{1}{\sqrt{|Q|}}\sum_{j\in
    Q}\partial_{x_{j,s}}\Big)^{\alpha_s}\Big] f\,.
\end{align}
In particular, if \(Q=\{j,k\}\), \(j,k\in\{1,\ldots,N\}\), \(j\neq
k\), then
\begin{align}\label{der-j+k}
  \partial_{x_j+x_k}^{\alpha}f:=\partial_{x_Q}^{\alpha}f
  =\Big[\prod_{s=1}^{3}\Big(\frac{1}{\sqrt2}\big(\partial_{x_j,s}+\partial_{x_k,s}\big)^{\alpha_s}\Big)\Big]f\,. 
\end{align}
It follows that if \(Q\subseteq\{1,\ldots,N\}\), \(Q\neq\emptyset\),
and \(f({\bf x})=g(x_j-x_k)\) for some \(j,k\in Q\) and
\(g:\R^3\to\R\), then
\begin{align}\label{parallel-der-zero}
  \partial_{x_Q}^{\alpha}f=\partial_{x_j+x_k}^{\alpha}f=\partial_{x_j+x_k}^{\alpha}g=0\,.
\end{align}

One can clearly reformulate these definition in terms of Fourier
transforms (multiplication by \(\xi_{Q}^{e_s}\) for suitably defined
\(\xi_{Q}\) in Fourier space). In a previous paper \cite{rho-smooth} we used
a coordinate transformation to describe these derivatives. 

Furthermore, we define (notice that generally $\Sigma_{Q}$ is
different from the previously defined $\Sigma^{Q}$) 
\begin{align}\label{def:Sigma-parallel}
  \Sigma_{Q}:=\big\{{\bf x}=(x_1,\ldots,x_N)\in\R^{3N}\,\big|\,
  \prod_{j\in Q}|x_j|
  \!\!\prod_{j\in Q, k\not\in Q}|x_j-x_k|=0\big\}\,,
\end{align}
so that
\begin{align}\label{eq:dist-Q-parallel}
      d_{Q}({\bf x},\Sigma):= d({\bf x},\Sigma_{Q})
     = \min\big\{\,|x_j|,
    \tfrac{1}{\sqrt{2}}|x_j-x_{k}|\,\big|\, j\in Q, k\not\in Q\}\,.
\end{align}
We then have the following estimate, concerning derivatives
\(\partial_{x_{Q}}^{\alpha}\) of local solutions \(\psi\)
along/parallel to the singularity \(\Sigma_{Q}\):
\begin{prop}\label{prop:main-three}
Let \(H\) be the non-relativistic Hamiltionian given by \eqref{H}.
For any \(Q\subset\{1,\ldots,N\}\), \(Q\neq\emptyset\), 
let the singular set \(\Sigma_{Q}\subset \R^{3N}\) be defined by
\eqref{def:Sigma-parallel}, 
and let the distance \(d_{Q}({\bf x},\Sigma)\) from \({\bf
  x}\in\R^{3N}\) to \(\Sigma_{Q}\) be  
given by \eqref{eq:dist-Q-parallel}. Furthermore, for 
\(\alpha\in\N_{0}^{3}\), \(|\alpha|\ge
1\), let \(\partial_{x_Q}^{\alpha}\) be defined by \eqref{def:d-x-Q}.

Then:
For all \(p\in(1,\infty]\), all \(Q\subset\{1,\ldots,N\}\),
\(Q\neq\emptyset\), all \(\alpha\in\N_{0}^{3}\), \(|\alpha|\ge
1\), all \(0<r<R<1\), and all \(E\in\C\), there
exists a constant \(C=C(p,Q,\alpha,r,R,E)\) (depending also on \(N, Z\))
such that for all 
\(\psi \in W_{{\rm loc}}^{2,2}(\R^{3N})\) satisfying  
\begin{align}
  \label{eq:eigen-parallel}
  H\psi=E\psi\,,
\end{align}
and for all \({\bf x}\in \R^{3N}\setminus \Sigma_{Q}\), the
following inequality holds:
\begin{align}\label{eq:resultPSI-Q-alpha-parallel}
  \|\partial_{x_{Q}}^{\alpha}&\psi\|_{L^{p}(B_{3N}({\bf x},r\lambda_{Q}({\bf x})))}
  \\&\le C \,
  \lambda_{Q}({\bf x})^{1-|\alpha|} \big(\|\psi\|_{L^{p}(B_{3N}({\bf
      x},R\lambda_{Q}({\bf x})))}
  +\|\nabla\psi\|_{L^{p}(B_{3N}({\bf
      x},R\lambda_{Q}({\bf x})))}\big)\,,
  \nonumber
\end{align} 
where \(\lambda_{Q}({\bf x})=\min\big\{1, d_{Q}({\bf
  x},\Sigma)\big\}\). 
\end{prop}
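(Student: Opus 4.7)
The plan is to adapt the rescaling-and-elliptic-regularity strategy underlying Theorem~\ref{thm:main-one}, exploiting a structural observation about $\partial_{x_Q}^{\alpha}$. Specifically, split
\begin{align*}
V = V_Q + V_{\text{rest}}, \quad V_Q({\bf x}) := -\sum_{j\in Q}\frac{Z}{|x_j|} + \!\!\sum_{i\in Q,\,j\notin Q}\!\!\frac{1}{|x_i-x_j|},
\end{align*}
so that $V_Q$ consists precisely of the Coulomb terms singular on $\Sigma_Q$, and $V_{\text{rest}}$ contains the remaining terms (those of the form $-Z/|x_j|$ with $j\notin Q$, and $1/|x_i-x_j|$ with either $i,j\in Q$ or $i,j\notin Q$). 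By \eqref{parallel-der-zero} and the fact that $\partial_{x_Q}^{\beta}$ differentiates only in the variables $x_j$, $j\in Q$, one verifies term by term that $\partial_{x_Q}^{\beta}V_{\text{rest}} = 0$ for every $|\beta|\ge 1$; in particular $\partial_{x_Q}^{\alpha}$ commutes with multiplication by $V_{\text{rest}}$, while $V_Q$ is real analytic on $\R^{3N}\setminus\Sigma_Q$.

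Fix ${\bf x}\in\R^{3N}\setminus\Sigma_Q$ and set $\lambda := \lambda_Q({\bf x})$. Define $\tilde{\psi}({\bf y}) := \psi({\bf x}+\lambda{\bf y})$, which satisfies $(-\Delta+\tilde{V})\tilde{\psi} = \lambda^2 E\,\tilde{\psi}$ with $\tilde{V} := \lambda^2 V({\bf x}+\lambda\,\cdot\,) = \tilde{V}_Q+\tilde{V}_{\text{rest}}$ on $B_{3N}(0,R)$. From \eqref{eq:dist-Q-parallel}, $|x_j|\ge\lambda$ for $j\in Q$ and $|x_i-x_j|\ge\sqrt{2}\lambda$ for $i\in Q$, $j\notin Q$, so the denominators in $\tilde{V}_Q$ stay bounded below by a positive multiple of $\lambda$ on $B_{3N}(0,R)$, $R<1$; hence $\tilde{V}_Q$ and all its derivatives are uniformly bounded there, independently of $\lambda\in(0,1]$. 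The rescaled $\tilde{V}_{\text{rest}}$ remains a Coulomb-type potential, possibly singular inside $B_{3N}(0,R)$.

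I would then argue by induction on $|\alpha|$. The base case $|\alpha|=1$ follows from $|\partial_{x_Q}^{\alpha}\psi|\le C|\nabla\psi|$. For the inductive step, since $\partial_{x_Q}^{\alpha}$ commutes with $\Delta$ and with $\tilde{V}_{\text{rest}}$, the function $u:=\partial_{x_Q}^{\alpha}\tilde{\psi}$ satisfies
\begin{align*}
\bigl(-\Delta+\tilde{V}-\lambda^2 E\bigr)u
= -\sum_{0<\beta\le\alpha}\binom{\alpha}{\beta}\bigl(\partial_{x_Q}^{\beta}\tilde{V}_Q\bigr)\partial_{x_Q}^{\alpha-\beta}\tilde{\psi},
\end{align*}
i.e., the same Schr\"odinger-type equation as $\tilde{\psi}$, but with a source that is a linear combination of bounded smooth functions times strictly lower-order parallel derivatives of $\tilde{\psi}$. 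The inductive hypothesis controls each $\partial_{x_Q}^{\alpha-\beta}\tilde{\psi}$ in $L^p$ on a slightly smaller ball. Invoking the same interior $L^p$-elliptic estimate for Schr\"odinger operators with Coulomb-type potentials that drives the proof of Theorem~\ref{thm:main-one}—applied now to $u$ on a chain of shrinking concentric balls $B_{3N}(0,r_k)$—yields the desired bound on $u$ in $L^p$. Undoing the rescaling recovers the factor $\lambda^{1-|\alpha|}$.

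The main obstacle is justifying this interior $L^p$-elliptic estimate on $u$, given that $-\Delta+\tilde{V}$ retains the singular coefficient $\tilde{V}_{\text{rest}}$ inside $B_{3N}(0,R)$. This is precisely the technical machinery already built to prove Theorem~\ref{thm:main-one}; what is crucial here is to apply this local machinery directly, rather than the global scaled statement of Theorem~\ref{thm:main-one}, so that $d_Q({\bf x},\Sigma)$ (tied to the cancellation $\partial_{x_Q}^{\alpha}V_{\text{rest}}=0$), rather than the smaller $d_{\alpha}({\bf x},\Sigma)$, appears in the final bound.
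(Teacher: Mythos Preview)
Your structural observation---that $\partial_{x_Q}^{\beta}V_{\text{rest}}=0$ for all $|\beta|\ge 1$---is exactly the right one, and it is the heart of the matter. But the execution has a real gap: after rescaling, your equation for $u=\partial_{x_Q}^{\alpha}\tilde\psi$ still carries the singular coefficient $\tilde V_{\text{rest}}$, and you defer to ``the same interior $L^p$-elliptic estimate for Schr\"odinger operators with Coulomb-type potentials that drives the proof of Theorem~\ref{thm:main-one}''. No such estimate is established anywhere in the paper. The machinery behind Theorem~\ref{thm:main-one} is not an elliptic estimate tolerating Coulomb singularities; it is the Jastrow substitution $\psi\mapsto e^{-F}\psi$, which \emph{removes} the singular part of $V$ (converting it into bounded zero- and first-order coefficients), after which the standard Gilbarg--Trudinger estimates of Appendix~\ref{sec:app1} (Theorems~\ref{thm-GT} and~\ref{thm-GT-two}, which require $L^\infty$ coefficients) apply. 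Without this step you cannot invoke those estimates on your equation for $u$.

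The fix is precisely what the paper does: introduce a Jastrow factor $F_Q$ built from the terms of $V_{\text{rest}}$ (that is, $-\tfrac{Z}{2}|x_j|$ for $j\notin Q$ and $\tfrac14|x_i-x_j|$ for pairs with $i,j\notin Q$ or $i,j\in Q$, each regularised at infinity), set $\psi_Q=e^{-F_Q}\psi$, and rescale. The resulting equation for $\psi_Q^{\lambda_Q}$ has only bounded coefficients plus the smooth, uniformly bounded $V_Q^{\lambda_Q}$. The crucial point---and this is where your observation pays off---is that $\partial_{x_Q}^{\beta}F_Q=0$ and $\partial_{x_Q}^{\beta}K_Q=0$ for all $\beta\neq 0$, for exactly the same reason that $\partial_{x_Q}^{\beta}V_{\text{rest}}=0$. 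Hence differentiating the equation for $\psi_Q^{\lambda_Q}$ by $\partial_{y_Q}^{\alpha}$ produces a right-hand side involving only derivatives of $V_Q^{\lambda_Q}$ times lower-order $\partial_{y_Q}$-derivatives of $\psi_Q^{\lambda_Q}$, and now standard elliptic regularity closes the induction. A secondary issue: your induction hypothesis, phrased only as an $L^p$ bound on $\partial_{x_Q}^{\gamma}\tilde\psi$ for $|\gamma|<|\alpha|$, does not by itself control the term $\|u\|_{L^p}$ appearing on the right-hand side of any interior elliptic estimate; the paper runs the induction in the stronger $C^{1,\theta}$ (respectively $W^{2,p}$) norm precisely so that one extra derivative is available at each step (see \eqref{eq:other-est-nduc-hyp}).
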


\ %

\subsection{Organisation of the paper and strategy of the proofs}

\ %

The first main idea of the proofs of Theorem~\ref{thm:main-one} (in
Section~\ref{sec:proof main} below) and
Proposition~\ref{prop:main-three} (in Section~\ref{sec:proof Q} below)  
(see also Proposition~\ref{prop:our a priori} in Appendix~\ref{sec:our
  a priori} below) is an 'Ansatz',
\(\psi=e^{F}\psi_{F}\), for the solution of \(H\psi=E\psi\), for
various suitable (see below), slightly different, 
choices of \(F\) (see \eqref{def:F-alpha}, \eqref{def:F-alpha-Q}, and
\eqref{def:tilde-F} below). The function \(e^{F}\) is often 
called a `Jastrow factor' in the Chemistry literature \cite{Jastrow}.
In the mathematical study of Coulombic eigenfunctions, it was
introduced in \cite{Leray} (with \(F=\widetilde{F}\) in
\eqref{def:tilde-F} below). It was applied (with the same \(F\)) to
study unique continuation in \cite[Corollary~4.1;
(4.7)]{HO-Stremnitzer} and regularity from \cite{AHP} onwards.
Using that \(H\psi=E\psi\), the function \(\psi_{F}\) solves the equation
\begin{align}\label{eq:resulting-psi-F}
  -\Delta\psi_{F} -2\nabla F\cdot\nabla\psi_{F}+ \big(V-\Delta F -
  |\nabla F|^2-E)\psi_{F} = 0\,.
\end{align}

The second main idea is to re-scale the resulting equation
\eqref{eq:resulting-psi-F}, from a ball around a (fixed) \({\bf
  x}\in\R^{3N}\) (away from the relevant singularity of \(V\)) of the size of
the distance \(d\) from \({\bf x}\) to the relevant part of the
singular set \(\Sigma\) (i.e., \(d_{\alpha}({\bf x},\Sigma)\) or
\(d_{Q}({\bf x},\Sigma)\)), to a ball of size one around \({\bf x}\). 
The \(F\) above has been
chosen such that, by the homogeneity of the potential \(V\) (see
\eqref{H}), 
this re-scaled equation
has coefficients whose (relevant) derivatives are either zero (see
\eqref{eq:der-K-H-zero} 
and \eqref{eq:der-K-H-zero-Q}), or are uniformly
bounded on compact subsets of the unit ball (see
\eqref{eq:der-V--alpha-final} and \eqref{eq:der-V-alpha-Q}). For this
to work, one needs to work with \(\lambda=\min\{1,d\}\), not
\(d\). 

Successive differentiation of this re-scaled equation (with respect to
the relevant variable), and application 
of standard elliptic regularity theory (\(C^{1,\theta}\) and
\(W^{2,p}\); see Appendix~\ref{sec:app1} below) to the resulting
equations, produces {\it a priori} estimates (on balls
of size slightly less than one, hence the \(r\) and \(R\) in the
theorems) with constants 
{\it independent} of \({\bf x}\). This fact is the essential part of the
argument.

Scaling back these {\it a priori} estimates for \(\alpha\)
derivatives delivers   
the explicit dependence (in \(\alpha\)) on the distance  \(d\) to the
relevant part of the singular set \(\Sigma\) (or rather,
on the corresponding \(\lambda\)) of the {\it a priori}
bounds of \(\partial^{\alpha}\psi_{F}\) on balls of the size of this
distance around \({\bf x}\) (see 
\eqref{eq:resultPSI-Q-alpha}--\eqref{eq:resultPSIa} and
\eqref{eq:resultPSI-Q-alpha-parallel} above).
An extra argument/iteration is
needed to get the optimal behavior in the number of derivatives. This
is assured by an {\it a priori} estimate on first derivatives, see
Proposition~\ref{prop:our a priori} in Appendix~\ref{sec:our a
  priori}. The estimates for \(\partial^{\alpha}\psi\) follow by the
properties of \(F\).

The (short) proofs of Corollaries~\ref{cor:main} and ~\ref{cor:exp}
can be found in Section~\ref{sec:proof-cor} below.

The proof of Theorem~\ref{thm:Nistor} (in
Section~\ref{sec:proof-Nistor} below) consists in carefully
integrating up the (local) {\it 
  a priori} estimates from Theorem~\ref{thm:main-one} (for \(p=2\)),
and applying the aforementioned {\it a priori} estimate in 
Proposition~\ref{prop:our a priori} in Appendix~\ref{sec:our a priori}.

To prove Theorem~\ref{thm:main-rho} (in Section~\ref{sec:proof-rho} below), on
\(\alpha\) derivatives (with respect to \(x_1\in\R^{3}\)) of the
electron density \(\rho\), we introduce (see
\eqref{chi-I}--\eqref{partition} in Appendix~\ref{sec:partition} below) a
particular partition of unity, \(1=\sum_{I}\chi_{I}\), in the
integration variable \(\hat{\bf 
  x}_{1}\) (here, \({\bf x}=(x_1,\hat{\bf x}_{1})\in\R^{3N}\)) in the
integral defining \(\rho\) (see \eqref{rho} above). This partition has
the property that, on 
\(\supp\,\chi_{I}\), the derivative \(\partial_{x_{1}}\) can be
changed into a \(\partial_{x_{Q}}\)
for a certain \(Q\subset\{1,\ldots,N\}\) (i.e., a
'derivative parallel to a singularity \(\Sigma_{Q}\)'; see
\eqref{def:x_Q}--\eqref{eq:dist-Q-parallel} above).
Furthermore (again, on
\(\supp\,\chi_{I}\)), \(\lambda_{Q}\) (\(=\min\{1,d_{Q}\}\)) is comparable to \(r(x_{1})\)
(\({}=\min\{1,|x_{1}|\}\); see Lemma~\ref{partition:control} and
Lemma~\ref{lem:der-chi's} below). Applying Proposition~\ref{prop:main-three} 
to each \(\chi_{I}\), and summing, then leads to \eqref{est:pointwise-rho}.
\section{Proof of Theorem~\ref{thm:main-one}}
\label{sec:proof main}
We give the proof of (i) and indicate the necessary changes for the
(much simpler) case of (ii).

We first derive an associated model-equation
(\eqref{eq:P-phi-alpha-lambda} below). 

Fix \(\alpha=(\alpha_1\ldots,\alpha_N)\in\N_0^{3N}\),
\(\alpha_i\in\N_{0}^{3}\), with \(|\alpha|\ge1\), and recall that
\(\Sigma^{\alpha}=\Sigma^{Q_{\alpha}}=\cup_{k\in Q_{\alpha}}\Sigma^{k}\) (see
\eqref{eq:Sigma-k} for \(\Sigma^{k}\)), with  
\(Q_{\alpha}=\{k\in\{1,\ldots,N\}\,|\, \alpha_k\neq0\}\). 
For \({\bf
  x}^{0}=(x_1^{0},\ldots,x_N^0)\in\R^{3N}\setminus\Sigma^{\alpha}\), let
\begin{align}\label{def:lambda-alpha}
  \lambda_{\alpha}:=\min\{1,d_{\alpha}({\bf
    x}^0,\Sigma)\}=\min\{1,d({\bf x}^0,\Sigma^{\alpha})\}>0\,. 
\end{align}

Define, for \({\bf x}=(x_1,\ldots,x_N)\in\R^{3N}\) ,
\begin{align}\label{def:F-alpha}\nonumber
  F_{\alpha}({\bf x})&=\sum_{j\not\in
    Q_{\alpha}}\big(-\tfrac{Z}{2}|x_j|+\tfrac{Z}{2}\sqrt{|x_j|^2+1}\big)
   \\&\quad+\sum_{j,k\not\in Q_{\alpha},
    j<k}\big(\tfrac{1}{4}|x_j-x_k|-\tfrac{1}{4}\sqrt{|x_j-x_k|^2+1}\big)\,. 
\end{align}
Note that there exists \(C=C(N,Z)>0\) such that
\begin{align}\label{eq:bounds-F-alpha}
  |F_{\alpha}({\bf x})|\,,\, |\nabla_{\bf x} F_{\alpha}({\bf x})|\le C
  \quad \text{ for all } {\bf x}\in\R^{3N}\setminus\Sigma\,, 
\end{align}
and that \(\partial^{\beta}_{\bf x}F_{\alpha}\equiv0\) for all
\(\beta\in\N_0^{3N}\) satisfying \(0<\beta\le\alpha\). This follows
from the definition of \(Q_{\alpha}\), since for such
\(\beta=(\beta_1,\ldots,\beta_N)\in\N_{0}^{3N}\),
\(\beta_j\in\N_{0}^{3}\), we have \(\beta_j=0\)
for all \(j\not\in Q_{\alpha}\).
Let 
\begin{align}\label{def:V-alpha}
  V_{\alpha}({\bf x})=\sum_{j\in
    Q_{\alpha}}-\frac{Z}{|x_j|}
   +\sum_{j,k\in Q_{\alpha}, j<k}\frac{1}{|x_j-x_k|}
   +\sum_{j\in Q_{\alpha}, k\not\in Q_{\alpha}}\frac{1}{|x_j-x_k|}\,.
\end{align}
We have that \(V_{\alpha}\in C^{\infty}(B_{3N}({\bf
  x}^{0},\lambda_{\alpha}))\): Note that \(B_{3N}({\bf
  x}^{0},\lambda_{\alpha})\subset \R^{3N}\setminus\Sigma^{\alpha}\) by \eqref{def:lambda-alpha}.
By the definition of \(\Sigma^{\alpha}\) (see \eqref{def:Q-alpha} and
\eqref{eq:Sigma-k}), if \({\bf 
  x}=(x_1,\ldots,x_N)\not\in\Sigma^{\alpha}\), then
\(|x_k|\neq 0\) for all those \(k\in\{1,\ldots,N\}\) for which \(\alpha_k\neq 0\, (\in
\N_0^3)\) (that is, for \(k\in Q_{\alpha}\)), and \(|x_k-x_j|\neq0\)
for the same \(k\), and all \(j\neq k\). 

Next, let
\begin{align}\label{def:G-alpha}\nonumber
  G_{\alpha}({\bf x})&=-\Big[\sum_{j\not\in
     Q_{\alpha}}\tfrac{Z}{2}\Delta_{{\bf x}}(\sqrt{|x_j|^2+1}) 
  -\sum_{j,k\not\in Q_{\alpha},j<k}\tfrac14\Delta_{{\bf x}}(\sqrt{|x_j-x_k|^2+1}) \Big]
  \\&=V({\bf x})-V_{\alpha}({\bf x}) - \Delta_{{\bf x}}
  F_{\alpha}({\bf x})\,.
\end{align}
Since \(|\Delta_{x}(\sqrt{|x|^2+1})|\le 3\) for all
\(x\in\R^3\), there exists  \(C=C(N,Z)>0\) such that
\begin{align}\label{eq:bounds-G-alpha}
  |G_{\alpha}({\bf x})|\le C
  \quad \text{ for all } {\bf x}\in\R^{3N}\,.
\end{align}
Also, \(\partial^{\beta}_{\bf x}G_{\alpha}\equiv0\) for all
\(\beta\in\N_0^{3N}\) satisfying \(0<\beta\le\alpha\), by the same
argument as above.
Therefore, with 
\begin{align}\label{def:K-alpha}
   K_{\alpha}({\bf x}):= G_{\alpha}({\bf x})-|\nabla_{\bf x} F_{\alpha}({\bx })|^{2}\,,
\end{align}
using \eqref{eq:bounds-F-alpha} and \eqref{eq:bounds-G-alpha},
there exists \(C=C(N,Z)>0\) such that
\begin{align}\label{eq:bounds-K-alpha}
  |K_{\alpha}({\bf x})|\le C
  \quad \text{ for all } {\bf x}\in\R^{3N}\,,
\end{align}
and 
\begin{align}\label{eq:der-K-zero}
  \partial^{\beta}_{\bf x}K_{\alpha}\equiv0 \text{ for all }
  \beta\in\N_0^{3N} \text{ satisfying }0<\beta\le\alpha\,.
\end{align}
Define 
\begin{align}\label{def:phi-alpha}
  \psi_{\alpha}:={\rm e}^{-F_{\alpha}}\psi\,,
\end{align}
then, using
  \eqref{eq:eigen-bis}, \(\psi_{\alpha}\) satisfies
\begin{align}\label{eq:phi-alpha}
  -\Delta_{\bf x}\psi_{\alpha}-2\nabla_{\bf x}
  F_{\alpha}\cdot\nabla_{\bf x}\psi_{\alpha}+\big(V_{\alpha}+K_{\alpha}-E\big)\psi_{\alpha}=0\,. 
\end{align}
Define rescaled functions by
\begin{align}\label{def:rescaled-alpha}
  \psi_{\alpha}^{\lambda_{\alpha}}({\bf y})&:=\psi_{\alpha}({\bf
    x}^0+\lambda_{\alpha}{\bf y})\,, \\ \label{def:rescaled-alpha-2}
   V_{\alpha}^{\lambda_{\alpha}}({\bf y})&:=\lambda_{\alpha} V_{\alpha}({\bf
    x}^0+\lambda_{\alpha}{\bf y})\,, \\ \label{def:rescaled-alpha-3}
  H_{\alpha}^{\lambda_{\alpha}}({\bf y})&:=\big(\nabla_{\bf x} F_{\alpha}\big)({\bf
    x}^0+\lambda_{\alpha}{\bf y})\,,\\ \label{def:rescaled-alpha-4}
  K_{\alpha}^{\lambda_{\alpha}}({\bf y})&:=K_{\alpha}({\bf x}^{0}+\lambda_{\alpha}{\bf
    y})
\end{align}
for \({\bf y}=(y_1,\ldots,y_N)\in B_{3N}(0,1)\),
\(y_i\in\R^{3}\). Then, by \eqref{eq:bounds-F-alpha} and 
\eqref{eq:bounds-K-alpha}, 
\begin{align}\label{est:alpha-fcts}
  |K_{\alpha}^{\lambda_{\alpha}}({\bf y})|,
  |H_{\alpha}^{\lambda_{\alpha}}({\bf y})| \le C=C(N,Z)
\end{align}
for all \({\bf y}\in B_{3N}(0,1)\), and
\begin{align}\label{eq:der-K-H-zero}
  \partial^{\beta}_{\bf y}K_{\alpha}^{\lambda_{\alpha}}=
  \partial^{\beta}_{\bf y}H_{\alpha}^{\lambda_{\alpha}}
  \equiv0 \text{ for all }
  \beta\in\N_0^{3N} \text{ satisfying }0<\beta\le\alpha\,.
\end{align}
We have that \(V_{\alpha}^{\lambda_{\alpha}}\in
C^{\infty}(B_{3N}(0,1))\), since, as noted above, \(V_{\alpha}\in
C^{\infty}(B_{3N}({\bf x}^{0},\lambda_{\alpha}))\). 
Furthermore, by the chain rule, for all
\(\gamma\in\N_{0}^{3N}\),
\begin{align}\label{eq:der-V-alpha}
  (\partial_{{\bf y}}^{\gamma}V_{\alpha}^{\lambda_{\alpha}})({\bf y}) =
  \lambda_{\alpha}^{|\gamma|+1}(\partial_{{\bf x}}^{\gamma}V_{\alpha})({\bf
   x}^{0}+\lambda_{\alpha}{\bf y})\,.
\end{align}
Note that, for all
\(\gamma=(\gamma_1,\ldots,\gamma_N)\in\N_{0}^{3N}\),
\(\gamma_i\in\N_0^3\),  
\begin{align}\label{est:V-alpha}\nonumber
  \big|\partial_{\bf x}^{\gamma}V_{\alpha}({\bf x})\big|&\le 
  \sum_{j\in Q_{\alpha}}
  \frac{Z\sqrt{2}\gamma_j!}{|x_j|}\Big(\frac{8}{|x_j|}\Big)^{|\gamma_j|}
  \\&\ +\sum_{j\in Q_{\alpha}}
  \sum_{k=1,k\neq j}^{N}
  \frac{\sqrt{2}(\gamma_j!+\gamma_k!)}{|x_j-x_k|}
  \Big(\frac{8}{|x_j-x_k|}\Big)^{|\gamma_j|+|\gamma_k|}\,.
\end{align}
(The exact value of the constant is immaterial; it can be found in
\cite[Lemma~C.3, (C.7)]{Ana-rel-HF}.) 
By the definition of \(Q_{\alpha}\), of
\(\Sigma^{\alpha}=\Sigma^{Q_{\alpha}}\), and of
\(\lambda_{\alpha}\), we have that 
\begin{align}\label{est:d}
  \lambda_{\alpha}\le d({\bf x}^{0},\Sigma^{\alpha})\le
  \left\{\begin{array}{ll}
         |x_j^{0}|&\text{for all } j\in Q_{\alpha}\,,\\
         \tfrac{1}{\sqrt2}|x_j^{0}-x_k^{0}|&\text{for all } j\in Q_{\alpha}, k\neq j\,.
         \end{array}\right.
\end{align}
Note that \({\bf x}^{0}+\lambda_{\alpha}{\bf y}=(x_{1}^{0}+\lambda_{\alpha}
y_{1},\ldots,x_{N}^{0}+\lambda_{\alpha} y_{N})\). 
Now, let \(R\in(0,1)\), and \({\bf y}\in B_{3N}(0,R)\). Then
\(|y_j|^2+|y_k|^2\le |{\bf y}|^2<R^2\) for all 
\(j,k\in\{1,\ldots,N\}\), and so \(|y_j|+|y_k|<\sqrt2 R\).

Hence, for all \({\bf y}\in B_{3N}(0,R)\), \(R\in(0,1)\), 
\begin{align*}
  |x_{j}^{0}&+\lambda_{\alpha} y_{j}|\ge |x_{j}^{0}|-\lambda_{\alpha}|y_{j}|\ge
  (1-R)\lambda_{\alpha} \quad \text{for all } j\in Q_{\alpha}\,,\\
  |(x_{j}^{0}&+\lambda_{\alpha} y_{j})-(x_{k}^{0}+\lambda_{\alpha} y_{k})|
  \ge |x_{j}^{0}-x_{k}^{0}|-\lambda_{\alpha}|y_{j}-y_{k}|
  \\&\ge \lambda_{\alpha}\big(\sqrt2-(|y_{j}|+|y_{k}|)\big)
  \ge \sqrt2 (1-R)\lambda_{\alpha}\quad \text{for all } j\in Q_{\alpha}, k\neq j\,.
\end{align*} 
Using this, \eqref{eq:der-V-alpha} and \eqref{est:V-alpha} imply that
for all \(\gamma=(\gamma_1,\ldots,\gamma_N)\in\N_{0}^{3N}\), and
all \({\bf y}\in B_{3N}(0,R)\), \(R\in(0,1)\),
\begin{align}\label{eq:der-V--alpha-final}
  \big|(\partial_{{\bf y}}^{\gamma}V_{\alpha}^{\lambda_{\alpha}})(&{\bf y}) \big|
  \le \lambda_{\alpha}^{|\gamma|+1}
  \Big[\frac{\sqrt{2}Z}{(1-R)}\sum_{j\in Q_{\alpha}}\gamma_{j}!
  \Big(\frac{8}{1-R}\Big)^{|\gamma_j|}\lambda_{\alpha}^{-|\gamma_j|-1}
  \\&+\frac{\sqrt2}{1-R}\sum_{j\in Q_{\alpha}}
  \sum_{k=1,k\neq j}^{N}(\gamma_{j}!+\gamma_{k}!)
  \Big(\frac{4\sqrt{2}}{1-R}\Big)^{|\gamma_j|+|\gamma_{k}|}
   \lambda_{\alpha}^{-|\gamma_j|-|\gamma_k|-1}\Big]
   \nonumber
   \\&\le C_{\gamma}(R)\,,
  \nonumber
\end{align}
with
\begin{align}\label{const-irrel}
 C_{\gamma}(R)=C_{\gamma}(R,N,Z)=\frac{\sqrt2}{1-R}
   N\Big(\frac{8}{1-R}\Big)^{|\gamma|}\gamma!(Z+2N-1)\,.
\end{align}
Here we used that \(\gamma_{j}!\le \gamma!\), 
\(|\gamma|=\sum_{j=1}^{N}|\gamma_{j}|\), and that \(\lambda_{\alpha}\le 1\).

The estimate \eqref{eq:der-V--alpha-final} is the essential property
of the potential \(V\) for the proof to work. It is also
satisfied for the potential \(W\) given in \eqref{eq:general-V}; see
Remark~\ref{rem:2}. 

It follows from \eqref{eq:phi-alpha} that
\(\psi_{\alpha}^{\lambda_{\alpha}}\), defined in \eqref{def:rescaled-alpha}, satisfies 
\begin{align}\label{eq:phi-alpha-lambda}\nonumber
  (\Delta_{\bf y}\psi_{\alpha}^{\lambda_{\alpha}})({\bf y}) &=
  \lambda_{\alpha}^2(\Delta_{\bf x}\psi_{\alpha})({\bf x}^{0}+\lambda_{\alpha}{\bf y})
  \\&= -2\lambda_{\alpha} H_{\alpha}^{\lambda_{\alpha}}({\bf y})\cdot(\nabla_{\bf
    y}\psi_{\alpha}^{\lambda_{\alpha}})({\bf y})
   \nonumber\\&\qquad  
  +\big[\lambda_{\alpha}
  V_{\alpha}^{\lambda_{\alpha}}({\bf y})+\lambda_{\alpha}^2(K_{\alpha}^{\lambda_{\alpha}}({\bf
    y})-E)\big]\psi_{\alpha}^{\lambda_{\alpha}}({\bf y})\,,
\end{align}
that is, with \(P=P({\bf y},\partial_{\bf y})\) the operator
\begin{align}\label{def:P}
  P=-\Delta_{\bf y}&-2\lambda_{\alpha} H_{\alpha}^{\lambda_{\alpha}}({\bf y})\cdot\nabla_{\bf
    y}\\&
  +\big[\lambda_{\alpha}
  V_{\alpha}^{\lambda_{\alpha}}({\bf y})+\lambda_{\alpha}^2(K_{\alpha}^{\lambda_{\alpha}}({\bf
    y})-E)\big]
  \nonumber
\end{align}
we have
\begin{align}\label{eq:P-phi-alpha-lambda}
  (P\psi_{\alpha}^{\lambda_{\alpha}})({\bf y})=0 \ \text{ in } B_{3N}(0,1)\,.
\end{align}
Note that for all \(R\in(0,1)\), by \eqref{est:alpha-fcts} and
\eqref{eq:der-V--alpha-final} 
(and \(\lambda_{\alpha}\le1\)),
the coefficients of \(P\) are all in 
\(L^{\infty}(B_{3N}(0,R))\), with norms bounded by some
\(C=C(R,N,Z,E)\); recall also \eqref{eq:der-K-H-zero} and
\eqref{eq:der-V--alpha-final}. 

\begin{pf*}{Proof of Theorem~\ref{thm:main-one} for \(p=\infty\)}
We will use \eqref{eq:P-phi-alpha-lambda} and standard elliptic
regularity (in the form of Theorem~\ref{thm-GT}
in Appendix~\ref{sec:app1} below)
to prove the following lemma, from which the case \(p=\infty\) of
Theorem~\ref{thm:main-one} follows. 
\begin{lem}\label{lem:a-priori-lambda}
 For all \(\beta\in\N_{0}^{3N}\), with \(0<\beta\le\alpha\), all
 \(\theta\in(0,1)\), and all
 \(R,r>0\), \(0<r<R<1\), there exists \(C=C(r,R,\beta,\theta, E,N,Z)\)
 such that
\begin{align}\label{eq:a-priori-alpha}
  \|\partial_{\bf
    y}^{\beta}&\psi_{\alpha}^{\lambda_{\alpha}}\|_{C^{1,\theta}(B_{3N}(0,r))}\\&\le
  C\big(\lambda_{\alpha}\|\psi_{\alpha}^{\lambda_{\alpha}}\|_{L^{\infty}(B_{3N}(0,R))}
  +\|\nabla_{{\bf y}}(\psi_{\alpha}^{\lambda_{\alpha}})\|_{L^{\infty}(B_{3N}(0,R))}\big)\,. 
  \nonumber
\end{align}
\end{lem}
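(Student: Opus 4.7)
The plan is to prove the lemma by induction on $k = |\beta|$ for $1 \le k \le |\alpha|$, applying the elliptic regularity estimate Theorem~\ref{thm-GT} to the equations successively satisfied by $\partial_{\bf y}^{\beta} \psi_{\alpha}^{\lambda_{\alpha}}$. The starting point is \eqref{eq:P-phi-alpha-lambda}, together with the fact that the coefficients of $P$ are bounded in $L^{\infty}(B_{3N}(0, R_{0}))$ for any $R_{0} < 1$ by constants depending only on $R_{0}, N, Z, E$ (using \eqref{est:alpha-fcts} and \eqref{eq:der-V--alpha-final}).

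First I would differentiate \eqref{eq:P-phi-alpha-lambda} by $\partial_{\bf y}^{\beta}$ using the Leibniz rule. The crucial identity \eqref{eq:der-K-H-zero} annihilates every contribution from differentiating $H_{\alpha}^{\lambda_{\alpha}}$ and $K_{\alpha}^{\lambda_{\alpha}}$, so only derivatives of $V_{\alpha}^{\lambda_{\alpha}}$ generate commutator terms, producing
\begin{equation*}
P(\partial_{\bf y}^{\beta} \psi_{\alpha}^{\lambda_{\alpha}}) = -\lambda_{\alpha} \sum_{0 < \gamma \le \beta} \binom{\beta}{\gamma} (\partial_{\bf y}^{\gamma} V_{\alpha}^{\lambda_{\alpha}})\, \partial_{\bf y}^{\beta - \gamma} \psi_{\alpha}^{\lambda_{\alpha}} \quad \text{in } B_{3N}(0,1).
\end{equation*}
By \eqref{eq:der-V--alpha-final}, each $\|\partial_{\bf y}^{\gamma} V_{\alpha}^{\lambda_{\alpha}}\|_{L^{\infty}(B_{3N}(0, R_{0}))}$ is bounded by a constant independent of $\lambda_{\alpha}$. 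Choosing a decreasing sequence of radii $R > r_{0} > r_{1} > \cdots > r_{|\alpha|} = r$ with gaps of order $(R - r)/|\alpha|$, I would induct on $k = |\beta|$. For the base case $k = 1$, only the term $\gamma = \beta$ appears, and applying Theorem~\ref{thm-GT} on $B_{3N}(0, r_{1}) \subset B_{3N}(0, r_{0})$ gives
\begin{equation*}
\|\partial_{\bf y}^{\beta} \psi_{\alpha}^{\lambda_{\alpha}}\|_{C^{1,\theta}(B_{3N}(0, r_{1}))} \le C\bigl(\|\partial_{\bf y}^{\beta} \psi_{\alpha}^{\lambda_{\alpha}}\|_{L^{\infty}(B_{3N}(0, r_{0}))} + \lambda_{\alpha} \|\psi_{\alpha}^{\lambda_{\alpha}}\|_{L^{\infty}(B_{3N}(0, R))}\bigr),
\end{equation*}
and the first norm on the right is controlled by $\|\nabla_{\bf y} \psi_{\alpha}^{\lambda_{\alpha}}\|_{L^{\infty}(B_{3N}(0, R))}$. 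For the inductive step $k \ge 2$, each term in the sum has $|\beta - \gamma| < k$ with $\beta - \gamma \le \alpha$, so when $\beta - \gamma \ne 0$ the $L^{\infty}$-norm of $\partial_{\bf y}^{\beta - \gamma} \psi_{\alpha}^{\lambda_{\alpha}}$ on $B_{3N}(0, r_{k-1})$ is bounded by the inductive hypothesis, while the case $\beta - \gamma = 0$ is absorbed into the $\lambda_{\alpha} \|\psi_{\alpha}^{\lambda_{\alpha}}\|_{L^{\infty}(B_{3N}(0, R))}$ term; Theorem~\ref{thm-GT} on $B_{3N}(0, r_{k}) \subset B_{3N}(0, r_{k-1})$ then closes the induction.

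The main obstacle I expect is the bookkeeping that preserves the factor $\lambda_{\alpha}$ in front of $\|\psi_{\alpha}^{\lambda_{\alpha}}\|_{L^{\infty}(B_{3N}(0,R))}$ through the iteration. This works because every commutator term carries an explicit factor $\lambda_{\alpha}$ inherited from the coefficient $\lambda_{\alpha} V_{\alpha}^{\lambda_{\alpha}}$ of $P$, while the constants produced by the inductive hypothesis are independent of $\lambda_{\alpha}$; since $\lambda_{\alpha} \le 1$, the intermediate factors of $\lambda_{\alpha}$ can be absorbed into the constant without degrading the leading factor. The estimate \eqref{eq:a-priori-alpha} then follows upon taking $k = |\beta|$.
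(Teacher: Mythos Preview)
Your proposal is correct and follows essentially the same inductive strategy as the paper: differentiate \eqref{eq:P-phi-alpha-lambda}, use \eqref{eq:der-K-H-zero} to kill commutators with $H_{\alpha}^{\lambda_{\alpha}}$ and $K_{\alpha}^{\lambda_{\alpha}}$, bound the $V_{\alpha}^{\lambda_{\alpha}}$-commutators via \eqref{eq:der-V--alpha-final}, and apply Theorem~\ref{thm-GT} on nested balls. The paper packages the same computation slightly differently by working with $\varphi_{\alpha,\beta}^{\lambda_{\alpha}}:=\lambda_{\alpha}^{-1}\partial_{\bf y}^{\beta}\psi_{\alpha}^{\lambda_{\alpha}}$, which is equivalent to your observation that every commutator term inherits the factor $\lambda_{\alpha}$ from the coefficient $\lambda_{\alpha}V_{\alpha}^{\lambda_{\alpha}}$.

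One point you pass over too quickly: when you apply Theorem~\ref{thm-GT} at step $k$, its right-hand side contains not only $\|g\|_{L^{\infty}}$ but also $\|\partial_{\bf y}^{\beta}\psi_{\alpha}^{\lambda_{\alpha}}\|_{L^{\infty}(B_{3N}(0,r_{k-1}))}$, and this term is of order $k$, not $<k$. The paper handles this explicitly (see \eqref{eq:other-est-nduc-hyp}): write $\beta=\beta'+e$ with $|e|=1$, so that $0<\beta'\le\alpha$ with $|\beta'|=k-1$, and bound $\|\partial_{\bf y}^{\beta}\psi_{\alpha}^{\lambda_{\alpha}}\|_{L^{\infty}(B_{3N}(0,r_{k-1}))}\le \|\partial_{\bf y}^{\beta'}\psi_{\alpha}^{\lambda_{\alpha}}\|_{C^{1,\theta}(B_{3N}(0,r_{k-1}))}$, which the induction hypothesis controls. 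Adding this sentence makes your argument complete.
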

We first prove that Theorem~\ref{thm:main-one} follows from
Lemma~\ref{lem:a-priori-lambda}. 
In particular, \eqref{eq:a-priori-alpha} holds for
\(\beta=\alpha\). Note that for all \(\gamma\in\N_{0}^{3N}\),
\begin{align}\label{eq:rel-der-psi-alpha}
  (\partial_{\bf y}^{\gamma}\psi_{\alpha}^{\lambda_{\alpha}})({\bf y})
  =\lambda_{\alpha}^{|\gamma|}(\partial_{\bf
    x}^{\gamma}\psi_{\alpha})({\bf x}^{0}+\lambda_{\alpha}{\bf y})\,,
\end{align}
so for all \({\bf y}\in B_{3N}(0,r)\), using \eqref{eq:a-priori-alpha},
\begin{align}\label{eq:change-coor-back}
  \big|(\partial_{\bf x}^{\alpha}&\psi_{\alpha})({\bf x}^{0}+\lambda_{\alpha}{\bf y})\big|
  =\lambda^{-|\alpha|}_{\alpha}\big|(\partial_{\bf
    y}^{\alpha}\psi_{\alpha}^{\lambda_{\alpha}})({\bf y})\big|
  \\\nonumber&\le
  C\lambda_{\alpha}^{-|\alpha|}\big(\lambda_{\alpha}\|\psi_{\alpha}^{\lambda_{\alpha}}\|_{L^{\infty}(B_{3N}(0,R))}
  +\|\nabla_{{\bf y}}(\psi_{\alpha}^{\lambda_{\alpha}})\|_{L^{\infty}(B_{3N}(0,R))}\big)
  \\\nonumber&=C\lambda_{\alpha}^{-|\alpha|+1}
  \big(\|\psi_{\alpha}\|_{L^{\infty}(B_{3N}({\bf x}^{0},R\lambda_{\alpha}))}
  +\|\nabla_{{\bf x}}\psi_{\alpha}\|_{L^{\infty}(B_{3N}({\bf x}^{0},R\lambda_{\alpha}))}\big)\,.
\end{align} 
The last equality follows from \eqref{eq:rel-der-psi-alpha}, used on
\(\nabla_{{\bf y}}(\psi_{\alpha}^{\lambda_{\alpha}})\). 

Now (see \eqref{def:phi-alpha}), \(\psi={\rm e}^{F_{\alpha}}\psi_{\alpha}\), with \(\partial_{\bf
    x}^{\alpha}F_{\alpha}\equiv0\), and \(\|F_{\alpha}\|_{L^{\infty}(\R^{3N})}\), 
\(\|\nabla_{{\bf x}}F_{\alpha}\|_{L^{\infty}(\R^{3N})}\le
  C(N,Z)\) (see \eqref{eq:bounds-F-alpha}). Hence,
  \eqref{eq:change-coor-back} gives that, for all \({\bf
    y}\in B_{3N}(0,r)\), 
\begin{align}\label{eq:put-back-F}
  \big|(&\partial_{\bf x}^{\alpha}\psi)({\bf
    x}^{0}+\lambda_{\alpha}{\bf y})\big|
  =\big|({\rm e}^{F_{\alpha}}\partial_{\bf x}^{\alpha}\psi_{\alpha})({\bf
   x}^{0}+\lambda_{\alpha}{\bf y})\big|
  \\\nonumber&\le C \lambda_{\alpha}^{-|\alpha|+1}\big(\|\psi_{\alpha}
  \|_{L^{\infty}(B_{3N}({\bf x}^{0},R\lambda_{\alpha}))}
  +\|\nabla_{{\bf x}}\psi_{\alpha}\|_{L^{\infty}(B_{3N}({\bf x}^{0},R\lambda_{\alpha}))}\big)
  \\\nonumber&=C \lambda_{\alpha}^{-|\alpha|+1}\big(\|{\rm e}^{-F_{\alpha}}\psi
  \|_{L^{\infty}(B_{3N}({\bf x}^{0},R\lambda_{\alpha}))}
  +\|\nabla_{{\bf x}}({\rm e}^{-F_{\alpha}}\psi)\|_{L^{\infty}(B_{3N}({\bf x}^{0},R\lambda_{\alpha}))}\big)
  \\\nonumber
  &\le C \lambda_{\alpha}^{1-|\alpha|}
  \big(\|\psi\|_{L^{\infty}(B_{3N}({\bf x}_{0},R\lambda_{\alpha}))}
  +\|\nabla_{{\bf x}}\psi\|_{L^{\infty}(B_{3N}({\bf x}_{0},R\lambda_{\alpha}))}\big)\,.
   \nonumber
\end{align} 

Hence, the above proves that for all \(\alpha\in\N_0^{3N}\),
\(|\alpha|\ge1\), and all \(0<r<R<1\) there exists 
\(C_{\alpha}(r,R)=C(\alpha,r,R,N,Z,E)\) such that for all \({\bf
  x}^{0}=(x_1^{0},\ldots,x_N^{0})\in\R^{3N}\setminus\Sigma^{\alpha}\),
\begin{align}\label{eq:last-estimate}
  \|\partial^{\alpha}&\psi\|_{L^{\infty}(B_{3N}({\bf x}^{0},r\lambda_{\alpha})}
  \\&\le C_{\alpha}(r,R)
   \lambda_{\alpha}^{1-|\alpha|} \big(\|\psi\|_{L^{\infty}(B_{3N}({\bf
      x}^{0},R\lambda_{\alpha}))}
  +\|\nabla\psi\|_{L^{\infty}(B_{3N}({\bf
      x}^{0},R\lambda_{\alpha}))}\big)\,.
  \nonumber
\end{align}
Recall (see \eqref{def:lambda-alpha}) that 
\(\lambda_{\alpha}=\min\{1,d_{\alpha}({\bf x}^{0},\Sigma)\}\). This
then proves \eqref{eq:resultPSI-Q-alpha}, and therefore
Theorem~\ref{thm:main-one}. 

It remains to prove Lemma~\ref{lem:a-priori-lambda}.
This will be done by induction in \(|\beta|\). 

First, using \eqref{est:alpha-fcts}, \eqref{eq:der-V--alpha-final}
(with \(\gamma=0\)), and \(\lambda_{\alpha}\le1\),
it follows from \eqref{eq:P-phi-alpha-lambda} and Theorem~\ref{thm-GT}
in Appendix~\ref{sec:app1} below that, for all
\(0<r<R<1\) and all \(\theta\in(0,1)\), we have
\begin{align}\label{eq:c-alpha}
  \psi_{\alpha}^{\lambda_{\alpha}} \in C^{1,\theta}_{\rm loc}(B_{3N}(0,1)) \ \text{
    for all } \theta\in(0,1)\,,
\end{align}
and
\begin{align}\label{eq:c-alpha-bis}
  \|\psi_{\alpha}^{\lambda_{\alpha}}\|_{C^{1,\theta}(B_{3N}(0,r))}\le
  C \|\psi_{\alpha}^{\lambda_{\alpha}}\|_{L^{\infty}(B_{3N}(0,R))}
\end{align}
for some constant \(C=C(r,R,\theta,N,Z,E)\).

The induction base:

Let \(\beta\in\N^{3N}\), with \(0<\beta\le\alpha\) and \(|\beta|=1\). Define
\(\varphi_{\alpha,\beta}^{\lambda_{\alpha}}:=\lambda_{\alpha}^{-1}\partial_{\bf
  y}^{\beta}\psi_{\alpha}^{\lambda_{\alpha}}\). Differentiating the
equation \eqref{eq:P-phi-alpha-lambda} for
\(\psi_{\alpha}^{\lambda_{\alpha}}\), then multiplying with
\(\lambda_{\alpha}^{-1}\), we get that 
\begin{align}\label{eq:first-der-psi-alpha}
  (P\varphi_{\alpha,\beta}^{\lambda_{\alpha}})({\bf y})&=
  g_{\alpha,\beta}^{\lambda_{\alpha}}({\bf y})\,,\\
  g_{\alpha,\beta}^{\lambda_{\alpha}}({\bf y})&=
  -\big[\partial_{\bf y}^{\beta}V_{\alpha}^{\lambda_{\alpha}}({\bf
    y})\big]
   \psi_{\alpha}^{\lambda_{\alpha}}({\bf y})\,.
\end{align}
Here we also used \eqref{eq:der-K-H-zero}. 

From \eqref{eq:der-V--alpha-final} (with \(\gamma=\beta\)) and
\eqref{eq:c-alpha} it follows that, for all \(R\in(0,1)\),  
\(g_{\alpha,\beta}^{\lambda_{\alpha}}\in L^{\infty}(B_{3N}(0,R))\), 
and that
\begin{align}\label{eq:est-g-alpha}
  \|g_{\alpha,\beta}^{\lambda_{\alpha}}\|_{L^{\infty}(B_{3N}(0,R))}
  \le C_{\beta}\|\psi_{\alpha}^{\lambda_{\alpha}}\|_{L^{\infty}(B_{3N}(0,R))}\,.
\end{align}
It therefore follows from Theorem~\ref{thm-GT} that
\(\varphi_{\alpha,\beta}^{\lambda_{\alpha}}\in C^{1,\theta}_{\rm loc}(B_{3N}(0,1))\)
for all \(\theta\in(0,1)\), and that, for all \(0<r<R<1\), 
\begin{align}\label{eq:induc-step-phi-alpha}
 \|\varphi_{\alpha,\beta}^{\lambda_{\alpha}}&\|_{C^{1,\theta}(B_{3N}(0,r))}
  \\\nonumber&\le
  C\big(\|g_{\alpha,\beta}^{\lambda_{\alpha}}\|_{L^{\infty}(B_{3N}(0,R))}
  +\|\varphi_{\alpha,\beta}^{\lambda_{\alpha}}\|_{L^{\infty}(B_{3N}(0,R))}\big)
  \\\nonumber&\le C\big(\|\psi_{\alpha}^{\lambda_{\alpha}}\|_{L^{\infty}(B_{3N}(0,R))}
  +\|\varphi_{\alpha,\beta}^{\lambda_{\alpha}}\|_{L^{\infty}(B_{3N}(0,R))}\big)\,,
\end{align}
for some constant \(C=C(r,R,\theta, \beta, E,N,Z)\).

It follows from \eqref{eq:induc-step-phi-alpha} and the fact that
\(\varphi_{\alpha,\beta}^{\lambda_{\alpha}}=\lambda_{\alpha}^{-1}\partial_{\bf 
  y}^{\beta}\psi_{\alpha}^{\lambda_{\alpha}}\), \(|\beta|=1\), 
that, for all
\(0<r<R<1\), all \(\theta\in(0,1)\), and all \(\beta\in\N_{0}^{3N}\) with \(0<\beta\le\alpha\),
  \(|\beta|=1\), 
\begin{align}\label{eq:first-ind-step-final}
  \|\partial_{\bf y}^{\beta}
  &\psi_{\alpha}^{\lambda_{\alpha}}\|_{C^{1,\theta}(B_{3N}(0,r))}
  \\&\le
  C\big(\lambda_{\alpha}\|\psi_{\alpha}^{\lambda_{\alpha}}\|_{L^{\infty}(B_{3N}(0,R))}
  +\|\nabla_{{\bf
      y}}\psi_{\alpha}^{\lambda_{\alpha}}\|_{L^{\infty}(B_{3N}(0,R))}\big)\,,
  \nonumber
\end{align}
for some \(C=C(r,R,\theta,\beta, E,N,Z)\). This is
\eqref{eq:a-priori-alpha}. 

The induction step:

Let now \(j\in\N\), \(1\le j\le|\alpha|\), and assume
\eqref{eq:a-priori-alpha}
holds for  all \(\beta\in\N_{0}^{3N}\), with \(0<\beta\le\alpha\) and
\(|\beta|\le j\), all
 \(\theta\in(0,1)\), and all
 \(0<r<R<1\), with a constant \(C=C(r,R,\beta,\theta,
 E,N,Z)\). 

Let \(\beta\le\alpha\), with \(|\beta|=j+1\), and 
let (as before)
\(\varphi_{\alpha,\beta}^{\lambda_{\alpha}}:=\lambda_{\alpha}^{-1}\partial^{\beta}_{\bf
  y}\psi_{\alpha}^{\lambda_{\alpha}} \). Differentiating the equation
\eqref{eq:P-phi-alpha-lambda},  then multiplying with
\(\lambda_{\alpha}^{-1}\), we get that 
\begin{align}\label{eq:diff-eq-alpha}
  P(\varphi_{\alpha,\beta}^{\lambda_{\alpha}})({\bf y})&=g_{\alpha,\beta}^{\lambda_{\alpha}}\,,\\
 g_{\alpha,\beta}^{\lambda_{\alpha}}&=
 -\sum_{\gamma\le\beta,
   |\gamma|\ge1}\binom{\beta}{\gamma}\big[(\partial_{\bf
   y}^{\gamma}V_{\alpha}^{\lambda_{\alpha}})({\bf y})\big](\partial_{\bf
   y}^{\beta-\gamma}\psi_{\alpha}^{\lambda_{\alpha}})({\bf y})\,.\nonumber
\end{align}
Again, we also used \eqref{eq:der-K-H-zero}. 

From \eqref{eq:der-V--alpha-final} and the induction hypothesis
(that is, \eqref{eq:a-priori-alpha} for \(\beta-\gamma\le\alpha\) with
\(|\beta-\gamma|\le|\beta|-1=j\)) 
it follows that
\(g_{\alpha,\beta}^{\lambda_{\alpha}}\in L^{\infty}(B_{3N}(0,\tilde{r}))\) for all \(\tilde{r}\in(0,1)\), 
and that, for all \(0<r<R<1\), 
\begin{align}\label{eq:est-g-alpha-bis}
  \|g_{\alpha,\beta}^{\lambda_{\alpha}}&\|_{L^{\infty}(B_{3N}(0,(r+R)/2))}
  \le\!\!\sum_{\gamma\le\beta,
   |\gamma|\ge1}C_{\gamma,\beta}\|\partial_{\bf
   y}^{\beta-\gamma}\psi_{\alpha}^{\lambda_{\alpha}}\|_{L^{\infty}(B_{3N}(0,R))}
  \\&\!\!\le
  C_{\beta}\big(\lambda_{\alpha}\|\psi_{\alpha}^{\lambda_{\alpha}}\|_{L^{\infty}(B_{3N}(0,R))}
  +\|\nabla_{{\bf y}}(\psi_{\alpha}^{\lambda_{\alpha}})\|_{L^{\infty}(B_{3N}(0,R))}\big)\,. 
  \nonumber
\end{align}
It therefore follows from Theorem~\ref{thm-GT} (applied to
\eqref{eq:diff-eq-alpha}) that  
\(\varphi_{\alpha,\beta}^{\lambda_{\alpha}}\in C^{1,\theta}_{\rm loc}(B_{3N}(0,1))\)
for all \(\theta\in(0,1)\), and that, for all \(0<r<R<1\), 
\begin{align}\label{eq:induc-step-phi-alpha-bis}
 \|\varphi_{\alpha,\beta}^{\lambda_{\alpha}}&\|_{C^{1,\theta}(B_{3N}(0,r))}
  \\\nonumber&\le
  C\big(\|g_{\alpha,\beta}^{\lambda_{\alpha}}\|_{L^{\infty}(B_{3N}(0,(r+R)/2))}
  +\|\varphi_{\alpha,\beta}^{\lambda_{\alpha}}\|_{L^{\infty}(B_{3N}(0,(r+R)/2))}\big)
  \\\nonumber&\le C\big(\lambda_{\alpha}\|\psi_{\alpha}^{\lambda_{\alpha}}\|_{L^{\infty}(B_{3N}(0,R))}
  +\|\nabla_{{\bf y}}(\psi_{\alpha}^{\lambda_{\alpha}})\|_{L^{\infty}(B_{3N}(0,R))}
  \\&\qquad\qquad\qquad\qquad\qquad\qquad
  +\|\varphi_{\alpha,\beta}^{\lambda_{\alpha}}\|_{L^{\infty}(B_{3N}(0,(r+R)/2))}\big)\,,
  \nonumber
\end{align}
for some constant \(C=C(r,R,\beta,\theta,E,N,Z)\).

Now write
\(\beta=\beta_j+e_j\), \(|e_j|=1, |\beta_j|=j\) (so
\(\beta_j\le\alpha\)). Recall that
\(\varphi_{\alpha,\gamma}^{\lambda_{\alpha}}=\lambda_{\alpha}^{-1}\partial_{\bf  
  y}^{\gamma}\psi_{\alpha}^{\lambda_{\alpha}}\),
\(\gamma\in\N_{0}^{3N}\). 
Then, by the induction hypothesis
(used on \(\beta_j\le\alpha\), \(|\beta_j|=j\)), and the definition of
the \(C^{1,\theta}\)-norm, 
\begin{align}\label{eq:other-est-nduc-hyp}
  \|\varphi_{\alpha,\beta}^{\lambda_{\alpha}}&\|_{L^{\infty}(B_{3N}(0,(r+R)/2))}
  \\\nonumber&
  =\|\partial_{\bf y}^{e_j}\varphi_{\alpha,\beta_j}^{\lambda_{\alpha}}\|_{L^{\infty}(B_{3N}(0,(r+R)/2))}
  \le \|\varphi_{\alpha,\beta_j}^{\lambda_{\alpha}}\|_{C^{1,\theta}(B_{3N}(0,(r+R)/2))}
  \\\nonumber&=\lambda_{\alpha}^{-1}\|\partial_{\bf
    y}^{\beta_j}\psi_{\alpha}^{\lambda_{\alpha}}\|_{C^{1,\theta}(B_{3N}(0,(r+R)/2))} 
  \\\nonumber&\le
  C\big(\|\psi_{\alpha}^{\lambda_{\alpha}}\|_{L^{\infty}(B_{3N}(0,R))} 
  +\lambda_{\alpha}^{-1}\|\nabla_{{\bf y}}(\psi_{\alpha}^{\lambda_{\alpha}})\|_{L^{\infty}(B_{3N}(0,R))}\big)\,.
\end{align}
It follows from \eqref{eq:induc-step-phi-alpha}, 
\eqref{eq:other-est-nduc-hyp}, and the fact that 
\(\varphi_{\alpha,\beta}^{\lambda_{\alpha}}=\lambda_{\alpha}^{-1}\partial^{\beta}_{\bf
  y}\psi_{\alpha}^{\lambda_{\alpha}} \) that, for all \(0<r<R<1\),
\begin{align}\label{eq:almost-done-induc}
  \|\partial_{\bf
    y}^{\beta}&\psi_{\alpha}^{\lambda_{\alpha}}\|_{C^{1,\theta}(B_{3N}(0,r))} 
  =\lambda_{\alpha}\|\varphi_{\alpha,\beta}^{\lambda_{\alpha}}\|_{C^{1,\theta}(B_{3N}(0,r))} 
  \\\nonumber&\le
  C\lambda_{\alpha}\big\{\lambda_{\alpha}\|\psi_{\alpha}^{\lambda_{\alpha}}\|_{L^{\infty}(B_{3N}(0,R))}
  +\|\nabla_{{\bf y}}(\psi_{\alpha}^{\lambda_{\alpha}})\|_{L^{\infty}(B_{3N}(0,R))}
  \\\nonumber&\qquad\qquad+\|\psi_{\alpha}^{\lambda_{\alpha}}\|_{L^{\infty}(B_{3N}(0,R))}
  +\lambda_{\alpha}^{-1}\|\nabla_{{\bf y}}(\psi_{\alpha}^{\lambda_{\alpha}})\|_{L^{\infty}(B_{3N}(0,R))}\big\}\,.
\end{align}
Using that \(\lambda_{\alpha}\le1\), this proves that \eqref{eq:a-priori-alpha} holds for
all \(\beta\in\N_{0}^{3N}\), with \(0<\beta\le\alpha\) and \(|\beta|=j+1\), all
 \(\theta\in(0,1)\), and all
 \(R,r>0\), \(0<r<R<1\), and some constant \(C=C(r,R,\beta,\theta,N,Z,E)\).
The lemma now follows by induction over \(j\).
\end{pf*}
This finishes the proof of Theorem~\ref{thm:main-one} in the case \(p=\infty\).\qed
\begin{pf*}{Proof of Theorem~\ref{thm:main-one} for \(p\in(1,\infty)\)}
Again, \eqref{eq:P-phi-alpha-lambda} and standard elliptic
regularity (this time in the form of Theorem~\ref{thm-GT-two}
in Appendix~\ref{sec:app1} below)
give the following lemma, from which the case \(p\in(1,\infty)\) of
Theorem~\ref{thm:main-one} follows. This lemma is the substitute
for Lemma~\ref{lem:a-priori-lambda} in the case \(p\in(1,\infty)\).
\begin{lem}\label{lem:a-priori-lambda-L-2}
For all \(p\in(1,\infty)\), all \(\beta\in\N_{0}^{3N}\), with
\(0<\beta\le\alpha\), and all 
\(R,r>0\), \(0<r<R<1\), there exists \(C=C(p,r,R,\beta,E,N,Z)\)
such that
\begin{align}\label{eq:a-priori-alpha-L-2}
  \|\partial_{\bf y}^{\beta}
  &\psi_{\alpha}^{\lambda_{\alpha}}\|_{W^{2,p}(B_{3N}(0,r))}\\&\le
  C\big(\lambda_{\alpha}\|\psi_{\alpha}^{\lambda_{\alpha}}\|_{L^{p}(B_{3N}(0,R))}
  +\|\nabla_{{\bf y}}(\psi_{\alpha}^{\lambda_{\alpha}})\|_{L^{p}(B_{3N}(0,R))}\big)\,. 
  \nonumber
\end{align}
\end{lem}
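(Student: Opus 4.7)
The plan is to follow the same induction scheme on \(|\beta|\) as in the proof of Lemma~\ref{lem:a-priori-lambda}, but to replace the \(C^{1,\theta}\) elliptic regularity of Theorem~\ref{thm-GT} by the Calder\'on--Zygmund-type \(W^{2,p}\) estimate recorded in Theorem~\ref{thm-GT-two}. All inputs that made the \(L^{\infty}\)-proof work remain in force: the coefficients of the rescaled operator \(P\) in \eqref{def:P} lie in \(L^{\infty}(B_{3N}(0,\tilde R))\) for every \(\tilde R<1\), with bounds depending only on \(R,N,Z,E\), thanks to \eqref{est:alpha-fcts}, \eqref{eq:der-V--alpha-final}, and \(\lambda_{\alpha}\le 1\); and \eqref{eq:der-K-H-zero} still ensures that differentiating \eqref{eq:P-phi-alpha-lambda} in \({\bf y}\) with respect to any \(0<\beta\le\alpha\) produces no terms involving derivatives of \(H_{\alpha}^{\lambda_{\alpha}}\) or \(K_{\alpha}^{\lambda_{\alpha}}\).

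For the base case \(|\beta|=1\), I would first apply Theorem~\ref{thm-GT-two} directly to \eqref{eq:P-phi-alpha-lambda} to obtain \(\psi_{\alpha}^{\lambda_{\alpha}}\in W^{2,p}_{\rm loc}(B_{3N}(0,1))\) together with the local estimate \(\|\psi_{\alpha}^{\lambda_{\alpha}}\|_{W^{2,p}(B_{3N}(0,\tilde r))}\le C\|\psi_{\alpha}^{\lambda_{\alpha}}\|_{L^{p}(B_{3N}(0,\tilde R))}\). Setting \(\varphi_{\alpha,\beta}^{\lambda_{\alpha}}:=\lambda_{\alpha}^{-1}\partial_{\bf y}^{\beta}\psi_{\alpha}^{\lambda_{\alpha}}\) and differentiating \eqref{eq:P-phi-alpha-lambda} gives the \(L^p\)-analogue of \eqref{eq:first-der-psi-alpha}, whose right-hand side \(g_{\alpha,\beta}^{\lambda_{\alpha}}=-[\partial_{\bf y}^{\beta}V_{\alpha}^{\lambda_{\alpha}}]\psi_{\alpha}^{\lambda_{\alpha}}\) lies in \(L^{p}(B_{3N}(0,\tilde R))\) by \eqref{eq:der-V--alpha-final}. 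A further application of Theorem~\ref{thm-GT-two} to \(P\varphi_{\alpha,\beta}^{\lambda_{\alpha}}=g_{\alpha,\beta}^{\lambda_{\alpha}}\) then produces \eqref{eq:a-priori-alpha-L-2} for \(|\beta|=1\), the factor \(\lambda_{\alpha}\) in front of \(\|\psi_{\alpha}^{\lambda_{\alpha}}\|_{L^p}\) appearing exactly as in \eqref{eq:first-ind-step-final}.

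The induction step will mirror \eqref{eq:diff-eq-alpha}--\eqref{eq:almost-done-induc} with \(L^p\) replacing \(L^{\infty}\). Assuming \eqref{eq:a-priori-alpha-L-2} for all \(\beta'\le\alpha\) with \(|\beta'|\le j\), and taking \(|\beta|=j+1\), differentiating \eqref{eq:P-phi-alpha-lambda} yields \(P\varphi_{\alpha,\beta}^{\lambda_{\alpha}}=g_{\alpha,\beta}^{\lambda_{\alpha}}\) with \(g_{\alpha,\beta}^{\lambda_{\alpha}}\) a finite sum of terms \([\partial_{\bf y}^{\gamma}V_{\alpha}^{\lambda_{\alpha}}]\partial_{\bf y}^{\beta-\gamma}\psi_{\alpha}^{\lambda_{\alpha}}\) with \(|\gamma|\ge 1\). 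The pointwise bound \eqref{eq:der-V--alpha-final} and the induction hypothesis applied to \(\beta-\gamma\le\alpha\), \(|\beta-\gamma|\le j\), give an \(L^p\) control of \(g_{\alpha,\beta}^{\lambda_{\alpha}}\) on \(B_{3N}(0,(r+R)/2)\) of the form
\begin{align*}
\|g_{\alpha,\beta}^{\lambda_{\alpha}}\|_{L^{p}(B_{3N}(0,(r+R)/2))}\le C_{\beta}\bigl(\lambda_{\alpha}\|\psi_{\alpha}^{\lambda_{\alpha}}\|_{L^{p}(B_{3N}(0,R))}+\|\nabla_{\bf y}\psi_{\alpha}^{\lambda_{\alpha}}\|_{L^{p}(B_{3N}(0,R))}\bigr).
\end{align*}
Writing \(\beta=\beta_j+e_j\) and using the trivial inclusion \(\|\varphi_{\alpha,\beta}^{\lambda_{\alpha}}\|_{L^p}=\|\partial_{\bf y}^{e_j}\varphi_{\alpha,\beta_j}^{\lambda_{\alpha}}\|_{L^p}\le\|\varphi_{\alpha,\beta_j}^{\lambda_{\alpha}}\|_{W^{2,p}}\) together with the induction hypothesis bounds \(\|\varphi_{\alpha,\beta}^{\lambda_{\alpha}}\|_{L^p}\) by the same right-hand side. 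A final invocation of Theorem~\ref{thm-GT-two} for \(\varphi_{\alpha,\beta}^{\lambda_{\alpha}}\), multiplied by \(\lambda_{\alpha}\) and absorbed as in \eqref{eq:almost-done-induc}, closes the induction.

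The main obstacle is purely bookkeeping: one has to juggle three nested radii \(r<(r+R)/2<R\) in each induction step so that no loss accumulates, and keep track of the single power \(\lambda_{\alpha}\) that multiplies \(\psi_{\alpha}^{\lambda_{\alpha}}\) in the estimates while all gradient terms appear without it. This is precisely the device underlying the \(C^{1,\theta}\)-argument and is what ensures that, after scaling back as in \eqref{eq:change-coor-back}--\eqref{eq:put-back-F}, one obtains the \(\lambda_{\alpha}^{1-|\alpha|}\) prefactor in \eqref{eq:resultPSI-Q-alpha} rather than a weaker \(\lambda_{\alpha}^{-|\alpha|}\) power. Apart from this, the \(W^{2,p}\)-version of the ``scaling back'' step is identical to the one spelled out for \(p=\infty\): it identifies \(\lambda_{\alpha}^{-|\alpha|+3N/p}\|\partial^{\alpha}\psi\|_{L^p(B_{3N}({\bf x}^0,r\lambda_{\alpha}))}\) with \(\|\partial_{\bf y}^{\alpha}\psi_{\alpha}^{\lambda_{\alpha}}\|_{L^p(B_{3N}(0,r))}\) up to the bounded factor \({\rm e}^{F_{\alpha}}\), whereupon the prefactors of \(\|\psi\|_{L^p}\) and \(\|\nabla\psi\|_{L^p}\) on the right reproduce the same powers and the \(L^p\)-version of Theorem~\ref{thm:main-one} follows.
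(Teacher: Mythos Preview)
Your proposal is correct and matches the paper's own proof almost exactly: the paper states that the proof of Lemma~\ref{lem:a-priori-lambda-L-2} ``follows that of Lemma~\ref{lem:a-priori-lambda} verbatim,'' substituting Theorem~\ref{thm-GT-two} for Theorem~\ref{thm-GT}, \(W^{2,p}\) for \(C^{1,\theta}\), and \(L^{p}\) for \(L^{\infty}\), which is precisely the scheme you outline. Your remark about the Jacobian factor \(\lambda_{\alpha}^{3N/p}\) appearing on both sides when scaling back is also the right observation for deducing the \(L^{p}\)-case of Theorem~\ref{thm:main-one} from the lemma, which the paper leaves to the reader.
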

The proof of Lemma~\ref{lem:a-priori-lambda-L-2} follows that of
Lemma~\ref{lem:a-priori-lambda} verbatim, except for substituting
`Theorem~\ref{thm-GT-two}' for `Theorem~\ref{thm-GT}',
`\(W^{2,p}_{{\rm loc}}\)' for `\(C^{1,\theta}_{{\rm loc}}\)',
`\(W^{2,p}(B_{3N}(0,\cdot))\)' for
`\(C^{1,\theta}(B_{3N}(0,\cdot))\)' (and leaving out \(\theta\) everywhere),
and `\(L^{p}(B_{3N}(0,\cdot))\)' for `\(L^{\infty}(B_{3N}(0,\cdot))\)'. 

Similarly, the proof that Theorem~\ref{thm:main-one} follows
from Lemma~\ref{lem:a-priori-lambda-L-2} in the case
\(p\in(1,\infty)\) mimics the one that
Theorem~\ref{thm:main-one} for \(p=\infty\) follows 
from Lemma~\ref{lem:a-priori-lambda} (substituting `\(L^p\)' for
`\(L^{\infty}\)'), and is left to the reader.
\end{pf*}
\section{Proof of Corollaries~\ref{cor:main} and ~\ref{cor:exp}}
\label{sec:proof-cor}
\begin{proof}[Proof of Corollary~\ref{cor:main}]
The inequality \eqref{eq:pointwise-bis} follows from
\eqref{eq:pointwise} by using \eqref{eq:d-alpha}, and that
\(\Sigma\supseteq\Sigma^{\alpha}\). 

It is obviously enough to prove  \eqref{eq:pointwise} for all
\(R\in(0,1)\). 
Use that \({\bf x}\in B_{3N}({\bf x},t)\) for all \(t>0\), the
bound \eqref{eq:resultPSI-Q-alpha} (with 
\(R/4\) and \(R/2\) for \(R\in(0,1)\)), that \(\lambda_{\alpha}\le
1\),   and the {\it a priori} estimate for \(\nabla\psi\) in
Theorem~\ref{eq:AHP-apriori} 
in Appendix~\ref{sec:our a priori} below
(with \(R/2\) and \(R\)),
to get the inequalities
\begin{align}\label{pf:cor}\nonumber
  |\partial^{\alpha}&\psi({\bf x})|\le
  \|\partial^{\alpha}\psi\|_{L^{\infty}(B_{3N}({\bf
      x},R\lambda_{\alpha}({\bf x})/4))}
  \\&\nonumber \le C\,\lambda_{\alpha}({\bf x})^{1-|\alpha|}
  \big(\|\psi\|_{L^{\infty}(B_{3N}({\bf
      x},R/2))}
  +\|\nabla\psi\|_{L^{\infty}(B_{3N}({\bf
      x},R/2))}\big)\\
  &\le C\,\lambda_{\alpha}({\bf x})^{1-|\alpha|}
  \|\psi\|_{L^{\infty}(B_{3N}({\bf x},R))}\,.
\end{align}
\end{proof}
\begin{proof}[Proof of Corollary~\ref{cor:exp}]
Note first that \eqref{eq:resultPSIc} follows from \eqref{eq:resultPSIb} (in
the same way that  \eqref{eq:pointwise-bis} followed from
\eqref{eq:pointwise} in the proof of Corollary~\ref{cor:main}).
 
To prove \eqref{eq:resultPSIb} note first that \eqref{eq:exp-dec}
implies that
\begin{align}\label{eq:equiv-norm-exp-decay}
  \|\psi\|_{L^{\infty}(B_{3N}({\bf x},1/2))}\le 
  C_0{\rm e}^{c_{0}/2}\, {\rm e}^{-c_{0}|{\bf x}|}\quad\text{ for all } {\bf
    x}\in\mathbb R^{3N}\,.
\end{align}
Therefore \eqref{eq:resultPSIb} follows from \eqref{eq:pointwise} when
\(d_{\alpha}({\bf x},\Sigma)\le 1\). 
Secondly note that (since \( d_{\alpha}({\bf x},\Sigma)\le |{\bf
  x}|\), see \eqref{eq:d-alpha}) we have that 
\begin{align*}
  d_{\alpha}({\bf x},\Sigma)^{|\alpha|-1}{\rm
  e}^{-c_{0}|{\bf x}|/2} 
  \le |{\bf x}|^{|\alpha|-1}{\rm
  e}^{-c_{0}|{\bf x}|/2} \,,
\end{align*}
and the right side is uniformly bounded for \({\bf x}\in\R^{3N}\). This proves
that \eqref{eq:resultPSIb} also follows from \eqref{eq:pointwise} when
\(d_{\alpha}({\bf x},\Sigma)\ge1\).  
(Note that this also shows that we can take \(c_{\alpha}\) as close
to \(c_{0}\) as we like, at the expense of increasing
\(C_{\alpha}\). Similarly for \(\widetilde{C}_{\alpha},
\widetilde{c}_{\alpha}\).) This finishes the proof of
\eqref{eq:resultPSIb}. 

To prove \eqref{eq:Sob-a} note that (see \eqref{eq:d-alpha}), for all
\({\bf x}=(x_1,\ldots,x_N)\in\R^{3N}\), 
\begin{align}\label{eq:triv}
d_{\alpha}({\bf x},\Sigma)&=|x_j| \ \ \text{ for some } j\in\{1,\ldots,N\}
\intertext{or}
\label{eq:triv-2}
d_{\alpha}({\bf x},\Sigma)&=\tfrac{1}{\sqrt{2}}|x_j-x_k| \ \ \text{
  for some } j,k\in\{1,\ldots,N\}\,. 
\end{align}
Hence, for all \({\bf x}\in \R^{3N}\) and all \(s\in\R\),
\begin{align}\label{eq:bound-d-pot}
  d_{\alpha}({\bf x},\Sigma)^{s}\le
  \sum_{j=1}^N|x_j|^{s}
    +\sum_{1\le j<k\le N}\big(\tfrac{1}{\sqrt{2}}|x_j-x_k|\big)^{s}\,.
\end{align}
We use the notation of \eqref{notation: hat x} and
\eqref{notation:dx} and, for \(j,k\in\{1,\ldots,N\}\), define the
orthogonal transformation 
\((y_j,y_k)=(x_j-x_k,x_j+x_k)/\sqrt2\). We denote the new coordinates
in \(\R^{3N}\) by \({\bf y}\). Then
it follows from \eqref{eq:resultPSIb} and \eqref{eq:bound-d-pot} that,
for \(|\alpha|\ge1\),
\begin{align}\label{eq:play exp's}
  \int_{\R^{3N}\setminus \Sigma^{\alpha}}
  &\big|d_{\alpha}({\bf x},\Sigma)^{|\alpha|-a}
  \partial^{\alpha}\psi({\bf  x})\big|^{2}
  \,d{\bf x}
  \\\nonumber&\le C_{\alpha}\sum_{j=1}^{N}\Big(\int_{\R^3}|x_j|^{2-2a}
  {\rm e}^{-2c_{\alpha}|x_j|}\,dx_j\Big)
   \Big(\int_{\R^{3N-3}}
   {\rm e}^{-2c_{\alpha}|\hat{\bf x}_j|}
   \,d\hat{\bf x}_j\Big)
   \\\nonumber&\ +C_{\alpha}\sum_{1\le j<k\le N}\Big(\int_{\R^3}|y_{j}|^{2-2a}
   {\rm e}^{-2c_{\alpha}|y_{j}|}\,dy_{j}\Big)
   \Big(\int_{\R^{3N-3}}
   {\rm e}^{-2c_{\alpha}|\hat{\bf y}_{j}|}
   \,d\hat{\bf y}_j\Big)\,.
\end{align}
Now note that the right side is finite for all \(a<5/2\). This
finishes the proof of \eqref{eq:Sob-a}.

The same argument works for \(d({\bf x},\Sigma)\) (use
\eqref{eq:resultPSIc} instead of \eqref{eq:resultPSIb}).
\end{proof}

\section{Proof of Theorem~\ref{thm:Nistor}}
\label{sec:proof-Nistor}
\begin{proof}
Note that, with \(\lambda_{\alpha}({\bf x})=\min\{1,d({\bf
  x},\Sigma^{\alpha})\}\), we have, for all \({\bf x}, {\bf
  y}\in\R^{3N}\) and \(\epsilon\in(0,1)\),
\begin{align}\label{eq:slow metric}
  |{\bf x}-{\bf y}|\le \epsilon \lambda_{\alpha}({\bf x})
  \ \Rightarrow\ 
  (1-\epsilon) \lambda_{\alpha}({\bf x}) \le \lambda_{\alpha}({\bf y}) \le
  (1+\epsilon) \lambda_{\alpha}({\bf x})\,.
\end{align}
This follows from 
\begin{align}\label{eq:dist}
 \big|d({\bf x},\Sigma^{\alpha})-d({\bf y},\Sigma^{\alpha})\big|\le
 |{\bf x}-{\bf y}|\,.
\end{align}
Also, for all \({\bf z}\in\R^{3N}\), \(b>0\),
\begin{align}\label{eq:integral ball}
   1=C_{N}(b)\lambda_{\alpha}({\bf z})^{-3N}
   \int_{\R^{3N}}
   \1_{\{|{\bf z}-{\bf u}|\le  b\lambda_{\alpha}({\bf  z})\}}
   ({\bf u})\,d{\bf u}\,,
\end{align}
with \(C_N(b)=b^{3N}\Vol(B_{3N}(0,1)\)). 
Note that, as a consequence of \eqref{eq:slow metric}, for all \({\bf z},
{\bf u}\in\R^{3N}\), all \(k\in\R\), and all \(\epsilon\in(0,1)\), 
\begin{align}\label{eq:change charac}
  \lambda_{\alpha}^{k}({\bf z})\,
  \1_{\{|{\bf z}-{\bf u}|\le  \epsilon\lambda_{\alpha}({\bf
      z})\}}({\bf u})
  \le C(k,\epsilon)\lambda_{\alpha}^{k}({\bf u})\,
  \1_{\{|{\bf u}-{\bf z}|\le \frac{\epsilon}{1-\epsilon}\lambda_{\alpha}({\bf
      u})\}}({\bf z})\,.
\end{align}

In the following we suppress that constants depend on
\(N,\alpha\), and \(a\). Also, \(C\) might change from line to line.

Using \eqref{eq:integral ball}, then 
\eqref{eq:change charac} (both with \(({\bf z},{\bf u})=({\bf x},{\bf
  y})\), and with \(b=\epsilon=1/4\)), 
we get that
\begin{align}
\label{eq:Nistor - pf}\nonumber
  &\int_{\R^{3N}\setminus \Sigma^{\alpha}}\big|\big(\lambda_{\alpha}^{|\alpha|-a}\partial^{\alpha}\psi\big)({\bf
   x})\big|^2\,d{\bf x}
  \\\nonumber&=C\int_{\R^{3N}\setminus \Sigma^{\alpha}}
  \int_{\R^{3N}}
  \!\!\!\!\!\!
  \lambda_{\alpha}({\bf
    x})^{-3N}
  \1_{\{|{\bf x}-{\bf y}|\le \lambda_{\alpha}({\bf  x})/4\}}({\bf y})
  \lambda_{\alpha}({\bf
    x})^{2|\alpha|-2a}\big|\partial^{\alpha}\psi({\bf x})\big|^{2}\,d{\bf y}\,d{\bf x}
  \\\nonumber&\le C\int_{\R^{3N}}\int_{\R^{3N}\setminus \Sigma^{\alpha}}
  \!\!\!\!\!\!
  \lambda_{\alpha}({\bf y})^{-3N}
  \1_{\{|{\bf y}-{\bf x}|\le \lambda_{\alpha}({\bf  y})/3\}}({\bf x})
  \lambda_{\alpha}({\bf y})^{2|\alpha|-2a}
  \big|\partial^{\alpha}\psi({\bf x})\big|^{2}\,d{\bf x}\,d{\bf y}
  \\\nonumber&=C\int_{\R^{3N}}\lambda_{\alpha}({\bf
    y})^{-3N}\lambda_{\alpha}({\bf y})^{2|\alpha|-2a}
  \Big(\int_{B_{3N}({\bf y},\lambda_{\alpha}({\bf y})/3)}  
  \big|\partial^{\alpha}\psi({\bf x})\big|^{2}\,d{\bf x}\Big)\,d{\bf
    y}
  \\&=C\int_{\R^{3N}}\lambda_{\alpha}({\bf
    y})^{-3N}\lambda_{\alpha}({\bf y})^{2|\alpha|-2a}
  \big\|\partial^{\alpha}\psi\|^{2}_{L^{2}(B_{3N}({\bf
      y},\lambda_{\alpha}({\bf y})/3))} \,d{\bf y}\,.
\end{align}
We also used that \(B_{3N}({\bf y},\lambda_{\alpha}({\bf y})/3)\setminus\Sigma^{\alpha}=
B_{3N}({\bf y},\lambda_{\alpha}({\bf y})/3)\).

We now use the {\it a priori} estimate \eqref{eq:resultPSI-Q-alpha} 
in Theorem~\ref{thm:main-one}
(with \(p=2\) and \(r=1/3, R=2/3\)),
then \eqref{eq:change charac} (this time with \(({\bf   z},{\bf
  u})=({\bf y},{\bf x})\) and \(\epsilon=2/3\)), and finally \eqref{eq:integral ball} 
(again with \(({\bf z},{\bf u})=({\bf x},{\bf
  y})\), but with \(b=2\)), to get that 
\begin{align}
\label{eq:Nistor - pf - two}\nonumber
  &\int_{\R^{3N}\setminus\Sigma^{\alpha}}\big|\big(\lambda_{\alpha}^{|\alpha|-a}\partial^{\alpha}\psi\big)({\bf
   x})\big|^2\,d{\bf x}
  \\\nonumber&
  \le C\int_{\R^{3N}}\lambda_{\alpha}({\bf
    y})^{-3N}\lambda_{\alpha}({\bf y})^{2-2a}
  \big\{\|\psi\|^{2}_{L^{2}(B_{3N}({\bf 
      y},2\lambda_{\alpha}({\bf y})/3))} 
    \\\nonumber&\qquad\qquad\qquad\qquad\qquad\qquad\qquad
   +\|\nabla\psi\|^{2}_{L^{2}(B_{3N}({\bf
      y},2\lambda_{\alpha}({\bf y})/3))} \big\}
   \,d{\bf y}
  \\\nonumber&=
  C\int_{\R^{3N}}\lambda_{\alpha}({\bf
    y})^{-3N}\lambda_{\alpha}({\bf y})^{2-2a}\times
  \\\nonumber&
\qquad\qquad
  \times
\Big(\int_{\R^{3N}}
  \1_{\{|{\bf y}-{\bf x}|\le 2\lambda_{\alpha}({\bf  y})/3\}}({\bf x})
  \big(|\psi({\bf x})|^2+|\nabla\psi({\bf x})|^2\big)\,   \,d{\bf x}\Big)   \,d{\bf y}
  \\\nonumber&\le C \int_{\R^{3N}}\lambda_{\alpha}({\bf
    x})^{2-2a}\big(|\psi({\bf x})|^2+|\nabla\psi({\bf x})|^2\big)\times
  \\\nonumber&
\qquad\qquad\qquad\qquad
  \times
   \Big(\int_{\R^{3N}}\lambda_{\alpha}({\bf x})^{-3N}
  \1_{\{|{\bf x}-{\bf y}|\le 2\lambda_{\alpha}({\bf  x})\}}({\bf y})
  \,d{\bf y}\Big)\,d{\bf x}
  \\&=
  C\int_{\R^{3N}}\lambda_{\alpha}^{2-2a}({\bf x})
  \big(|\psi({\bf x})|^2+|\nabla\psi({\bf x})|^2\big)\,d{\bf x}\,.
\end{align}

Recall that \(\lambda_{\alpha}({\bf x})\le 1\) for all \({\bf
  x}\in\R^{3N}\). Hence, if \(a\le1\), it follows that
\begin{align}\label{eq:triv case}\nonumber
  \int_{\R^{3N}\setminus\Sigma^{\alpha}}&\big|\big(\lambda_{\alpha}^{|\alpha|-a}\partial^{\alpha}\psi\big)({\bf
   x})\big|^2\,d{\bf x}
  \le C\int_{\R^{3N}}
  \big(|\psi({\bf x})|^2+|\nabla\psi({\bf x})|^2\big)\,d{\bf x}
  \\&=C\|\psi\|_{W^{1,2}(\R^{3N})}^{2}\le
 C\|\psi\|_{W^{2,2}(\R^{3N})}^{2}<\infty\,,
\end{align}
since \(\psi\in W^{2,2}(\R^{3N})\), which proves \eqref{eq:Sob-two} in
this case.

If, on the other hand, \(a\in(1,5/2)\), we have that
\begin{align}\label{eq:compl case}
   &\int_{\R^{3N}\setminus\Sigma^{\alpha}}\big|\big(\lambda_{\alpha}^{|\alpha|-a}\partial^{\alpha}
   \psi\big)({\bf x})\big|^2\,d{\bf x}
   \\\nonumber&\le C\int_{\{d({\bf x},\Sigma^{\alpha})\le1\}}
   \!\!\!\!\!\!\!
   \lambda_{\alpha}^{2-2a}({\bf x})
   \big(|\psi({\bf x})|^2+|\nabla\psi({\bf x})|^2\big)\,d{\bf x}
   \\\nonumber&\, +C\int_{\{d({\bf x},\Sigma^{\alpha})>1\}}
   \!\!\!\!\!\!\!
   \lambda_{\alpha}^{2-2a}({\bf x})
   \big(|\psi({\bf x})|^2+|\nabla\psi({\bf x})|^2\big)\,d{\bf x}
   \\\nonumber&\le C\int_{\{d({\bf x},\Sigma^{\alpha})\le1\}} 
   \!\!\!\!\!\!\!
   d({\bf x},\Sigma^{\alpha})^{2-2a}
   \big(|\psi({\bf x})|^2+|\nabla\psi({\bf x})|^2\big)\,d{\bf x}
   +C\|\psi\|_{W^{2,2}(\R^{3N})}^{2}\,,
\end{align}
by the same argument as above. It therefore remains to estimate the
first term on the right side of \eqref{eq:compl case}. (At this point,
compare with \eqref{eq:play exp's}.) Using \eqref{eq:triv},
\eqref{eq:triv-2}, and \eqref{eq:bound-d-pot} we get that
\begin{align}\label{eq:Large_a}
   &\int_{\{d({\bf x},\Sigma^{\alpha})\le1\}} 
   \!\!\!\!\!\!\!
   d({\bf x},\Sigma^{\alpha})^{2-2a}
   \big(|\psi({\bf x})|^2+|\nabla\psi({\bf x})|^2\big)\,d{\bf x}
   \\&\le \sum_{j=1}^{N}\int_{\{|x_j|\le 1\}}|x_j|^{2-2a}
   \big(|\psi({\bf x})|^2+|\nabla\psi({\bf x})|^2\big)\,d{\bf x}
   \nonumber
   \\&\ +\sum_{1\le j<k\le N}^{N}\int_{\{|x_j-x_k|\le 1\}}\big(\tfrac{1}{\sqrt2}|x_j-x_k|\big)^{2-2a}
   \big(|\psi({\bf x})|^2+|\nabla\psi({\bf x})|^2\big)\,d{\bf x}\,.
   \nonumber
\end{align}

It remains to show that each summand on the right side in
\eqref{eq:Large_a} is finite for any \(a<\frac52\). 
All summands will be treated in the same
manner, so we just consider one of each of them. 

For fixed $\hat{\bf x}_{1} \in \R^{3N-3}$ we can estimate, since \(a<\frac{5}{2}\),
\begin{align}\label{eq:Nistor-5}
  &\int_{\{|x_1|\le 1\}}|x_1|^{2-2a}
  \big(|\psi(x_1,\hat{\bf x}_{1})|^2+|\nabla\psi(x_1,\hat{\bf
    x}_{1})|^2\big)\,dx_1\nonumber\\ 
  &\leq
   \big\{\|\psi\|_{L^{\infty}(B_{3N}((0,\hat{\bf
       x}_{1}),2))}^2+\|\nabla\psi\|_{L^{\infty}(B_{3N}((0,\hat{\bf
       x}_{1}),2))}^2\big\}
   \int_{\{|x_1|\le 1\}}|x_1|^{2-2a}\,dx_1 \nonumber \\
   &\leq
   C(a) \|\psi\|_{L^{2}(B_{3N}((0,\hat{\bf x}_{1}),4))}^2\,,
\end{align}
where we used the {\it a priori} estimate of Proposition~\ref{prop:our a
  priori} below and the finiteness of the integral to get the last
inequality. 

Therefore we get
\begin{align}\label{eq:Nistor-6}
  \int_{\{|x_1|\le 1\}}&|x_1|^{2-2a}
  \big(|\psi({\bf x})|^2+|\nabla\psi({\bf x})|^2\big)\,d{\bf x}
  \nonumber \\ 
  &\leq
  C \int_{\R^{3N-3}} \big( \int_{\R^{3N}} |\psi({\bf y})|^2 \1_{\{|{\bf
      y} - (0, \hat{\bf x}_{1})| \leq 4\}} \,d{\bf y}\big) 
  d\hat{\bf x}_{1} \nonumber \\
  &\leq
  C \int_{\R^{3N}} |\psi({\bf y})|^2  \Big(  \int_{\R^{3N-3}} 
  \1_{\{|\hat{\bf y}_{1} - \hat{\bf x}_{1}| \leq 4\}} 
  \,d\hat{\bf x}_{1} \Big) 
 \,d{\bf
    y}\nonumber \\ 
  &=C \| \psi\|^2_{L^2(\R^{3N})} < \infty\,.
\end{align}
The last inequality follows since the inner integral is independent of
\({\bf y}\).

Similarly, for fixed $\hat{\bf x}_{1,2} \in \R^{3N-6}$ (with \({\bf
  x}=(x_1,x_2,\hat{\bf x}_{1,2})\)), make the 
orthogonal transformation \((y_1,y_2)=(x_1-x_2,x_1+x_2)/\sqrt2\)
(see the argument leading to \eqref{eq:play exp's}).
Then we can estimate, since \(a<\frac{5}{2}\),
\begin{align}\label{eq:Nistor-7}
  &\int_{\{|x_1-x_2|\le 1\}}
  \!\!\!\!\!
  \big(\tfrac{1}{\sqrt2}|x_1-x_2|\big)^{2-2a}
  \big(|\psi({\bf x})|^2+|\nabla\psi({\bf
    x})|^2\big)\,dx_1\,dx_2\nonumber\\ 
  &=\int_{\R^3}\int_{\{|y_1|\le 1\}}
  |y_1|^{2-2a}
  \big\{|\psi(\tfrac{y_1+y_2}{\sqrt2},\tfrac{y_2-y_1}{\sqrt2},\hat{\bf
    x}_{1,2})|^2 \nonumber
  \\&\qquad\qquad\qquad\qquad\qquad\qquad\qquad
  +|\nabla\psi(\tfrac{y_1+y_2}{\sqrt2},\tfrac{y_2-y_1}{\sqrt2},\hat{\bf
    x}_{1,2})|^2\big\}\,dy_1\,dy_2\nonumber\\  
  &\leq \int_{\R^3}
  \Big(\int_{\{|y_1|\le 1\}}|y_1|^{2-2a}\,dy_1\Big)
   \big\{\|\psi\|_{L^{\infty}(B_{3N}((\tfrac{y_2}{\sqrt2},\tfrac{y_2}{\sqrt2},
  \hat{\bf x}_{1,2}),2))}^2
  \nonumber\\&\qquad\qquad\qquad\qquad\qquad\qquad\qquad
  +\|\nabla\psi\|_{L^{\infty}(B_{3N}((\tfrac{y_2}{\sqrt2},\tfrac{y_2}{\sqrt2},\hat{\bf
       x}_{1,2}),2))}^2\big\}
   \,dy_2 \nonumber \\
   &\leq
   C(a) \int_{\R^3}
   \|\psi\|_{L^{2}(B_{3N}((\tfrac{y_2}{\sqrt2},\tfrac{y_2}{\sqrt2},\hat{\bf x}_{1,2}),4))}^2\,dy_2\,,
\end{align}
where we again used the {\it a priori} estimate of Proposition~\ref{prop:our a
  priori} and the finiteness of the inner integral to get the last
inequality. 

Hence,
\begin{align}\label{eq:Nistor-8}\nonumber
  &\int_{\{|x_1-x_2|\le 1\}}
  \!\!\!\!\!
  \big(\tfrac{1}{\sqrt2}|x_1-x_2|\big)^{2-2a}
  \big(|\psi({\bf x})|^2+|\nabla\psi({\bf
    x})|^2\big)\,dx_1\,dx_2\,d\hat{\bf x}_{1,2}   
  \\&\le
  C \int_{\R^{3N}}|\psi({\bf z})|^2 \Big( \int_{\R^{3N-3}}  \1_{\{|{\bf
      z} - (\tfrac{y_2}{\sqrt2},\tfrac{y_2}{\sqrt2},\hat{\bf
      x}_{1,2})| \leq 4\}}
  dy_2\, d\hat{\bf x}_{1,2}\Big) \,d{\bf z}\nonumber
  \\&\le C \int_{\R^{3N}}|\psi({\bf z})|^2 \Big( \int_{\R^{3N-3}}
  \1_{\{|(z_2, \hat{\bf
      z}_{1,2}) - (\tfrac{y_2}{\sqrt2},\hat{\bf
      x}_{1,2})| \leq 4\}}
  dy_2\, d\hat{\bf x}_{1,2}\Big) \,d{\bf z}\nonumber\\
 &=C \| \psi\|^2_{L^2(\R^{3N})} < \infty\,,
\end{align}
where, again, the last inequality follows since the inner integral is
independent of \({\bf z}\).

This finishes the proof of \eqref{eq:Sob-two} (for
\(\lambda_{\alpha}\)). The proof of \eqref{eq:Sob-three} (for
\(\lambda\)) is completely analogous (replace \(\lambda_{\alpha}\) by
\(\lambda\), and use \eqref{eq:resultPSIa} from Theorem~\ref{thm:main-one}
instead of \eqref{eq:resultPSI-Q-alpha}, in the argument above).

This finishes the proof of Theorem~\ref{thm:Nistor}.
\end{proof}
\section{Proof of Proposition~\ref{prop:main-three}}
\label{sec:proof Q}
Assume, without restriction, that
\(Q=\{1,\ldots,M\}\subseteq\{1,\ldots,N\}\), \(M\le N\). 
Fix \({\bf
  x}^{0}=(x_1^{0},\ldots,x_N^0)\in\R^{3N}\setminus\Sigma_{Q}\) and
\begin{align}\label{def:lambda-alpha-Q}
  \lambda_{Q}:=\min\{1,d_{Q}({\bf
    x}^0,\Sigma)\}=\min\{1,d({\bf x}^0,\Sigma_{Q})\}\,. 
\end{align}
For \(\Sigma_{Q}\) and \(d({\bf x}^0,\Sigma_{Q})\), see
\eqref{def:Sigma-parallel}--\eqref{eq:dist-Q-parallel}. Recall that 
(in general) \(\Sigma_{Q}\neq\Sigma^{Q}\).
We proceed similarly to the proof of Theorem~\ref{thm:main-one} but
will exploit the structure of \(\Sigma_{Q}\).

Define, for \({\bf x}=(x_1,\ldots,x_N)\in\R^{3N}\),
\begin{align}\label{def:F-alpha-Q}\nonumber
  F_{Q}({\bf x})&=\sum_{j\not\in Q}
  \big(-\tfrac{Z}{2}|x_j|+\tfrac{Z}{2}\sqrt{|x_j|^2+1}\big) 
  \\\nonumber
  &\quad+\sum_{j,k\not\in Q,j<k}
  \big(\tfrac{1}{4}|x_j-x_k|-\tfrac{1}{4}\sqrt{|x_j-x_k|^2+1}\big) 
  \\&\quad+\sum_{j,k\in Q, j<k}
  \big(\tfrac{1}{4}|x_j-x_k|-
  \tfrac{1}{4}\sqrt{|x_j-x_k|^2+1}\big)\,. 
\end{align}
Note that there exists \(C=C(N,Z)>0\) such that
\begin{align}\label{eq:bounds-F-alpha-Q}
  |F_{Q}({\bf x})|\,,\, |\nabla_{\bf x} F_{Q}({\bf x})|\le C
  \quad \text{ for all } {\bf x}\in\R^{3N}\setminus\Sigma\,, 
\end{align}
and that \(\partial^{\beta}_{x_{Q}}F_{Q}\equiv0\) for all
\(\beta\in\N_0^{3}\), \(\beta\neq0\), by the
definition of \(x_{Q}\) and \(\partial^{\beta}_{x_{Q}}\) (see
\eqref{def:x_Q}--\eqref{def:d-x-Q}), since, 
such \(\beta\), and all \(i,j\),
\begin{align}\label{eq:parallel-diff-e-e}
   \partial_{x_i+x_j}^{\beta}|x_i-x_j|=0\,;
\end{align}
see also \eqref{parallel-der-zero}. Let 
\begin{align}\label{def:V-alpha-Q}
  V_{Q}({\bf x})=\sum_{j\in Q}
  -\frac{Z}{|x_j|}
  +\sum_{j\in Q}\sum_{k\not\in Q}\frac{1}{|x_j-x_k|}\,.
\end{align}
Note that
\(V_{Q}\in C^{\infty}(B_{3N}({\bf x}^{0},\lambda_{Q}))\),
since \({\bf x}^{0}\in\R^{3N}\setminus \Sigma_{Q}\), and
\(\lambda_{Q}\le d({\bf x}^{0},\Sigma_{Q})\) by
\eqref{def:lambda-alpha-Q}.  

Define
\begin{align}\label{eq:def_psi_Q}
  \psi_Q := e^{-F_Q} \psi\,.
\end{align}
Then, using \(H\psi=E\psi\), we get that \(\psi_Q\) satisfies the
equation 
\begin{align}
  -\Delta \psi_Q - 2 \nabla F_Q \cdot \nabla \psi_Q +
  (V_Q + K_Q - E) \psi_Q=0\,,
\end{align}
where
\begin{align}
  K_Q = - |\nabla F_Q|^2 -\Delta
  \Big\{&\sum_{j\not\in Q}\tfrac{Z}{2}\sqrt{|x_j|^2+1}
  -\sum_{j,k\not\in Q,j<k}
  \tfrac{1}{4}\sqrt{|x_j-x_k|^2+1} \nonumber \\
  &-\sum_{j,k\in Q,j<k}
  \tfrac{1}{4}\sqrt{|x_j-x_k|^2+1}\,\Big\} \,.
\end{align}
Notice that $K_Q$ is bounded on \(\R^{3N}\setminus\Sigma\), and that 
\(\partial^{\beta}_{x_{Q}}K_{Q}\equiv0\) for all
\(\beta\in\N_{0}^{3}\), \(\beta\neq0\), just as above for \(F_{Q}\).  

Define rescaled functions by
\begin{align}\label{def:rescaled-alpha-Q}
  \psi_{Q}^{\lambda_{Q}}({\bf y})&:=\psi_{Q}({\bf x}^0
  +\lambda_{Q}{\bf y})\,, 
  \\ V_{Q}^{\lambda_{Q}}({\bf y})&:=\lambda_{Q} 
  V_{Q}({\bf x}^0+\lambda_{Q}{\bf y})\,, \\
  H_{Q}^{\lambda_{Q}}({\bf y})&:=\big(\nabla_{\bf x} F_{Q}\big)
  ({\bf x}^0+\lambda_{Q}{\bf y})\,,\\
  K_{Q}^{\lambda_{Q}}({\bf y})&:=K_{Q}({\bf x}^{0}
  +\lambda_{Q}{\bf y})
\end{align}
for \({\bf y}=(y_1,\ldots,y_N)\in B_{3N}(0,1)\),
\(y_i\in\R^{3}\). Then, since $K_Q$ and $\nabla F_Q$ are bounded on 
\(\R^{3N}\setminus\Sigma\),   
\begin{align}\label{est:alpha-fcts-Q}
  |K_{Q}^{\lambda_{Q}}({\bf y})|,
  |H_{Q}^{\lambda_{Q}}({\bf y})| \le C,
\end{align}
for all \({\bf y}\in B_{3N}(0,1)\), and
\begin{align}\label{eq:der-K-H-zero-Q}
  \partial^{\beta}_{y_Q}K_{Q}^{\lambda_{Q}}=
  \partial^{\beta}_{y_Q}H_{Q}^{\lambda_{Q}}
  \equiv0 \ \text{ for all }
  \beta\in\N_0^{3}\,,\, \beta\neq0\,.
\end{align}
Here, \(y_{Q}\) and \(\partial^{\beta}_{y_{Q}}\) are defined as for
\(x_{Q}\). 
Also, \(V_{Q}^{\lambda_{Q}}\in
C^{\infty}(B_{3N}(0,1))\), and, by estimates and arguments as in 
\eqref{eq:der-V-alpha}--\eqref{eq:der-V--alpha-final},
\begin{align}\label{eq:der-V-alpha-Q}
  |(\partial_{{\bf y}}^{\gamma}V_{Q}^{\lambda_{Q}})({\bf y})| =
  |\lambda_{Q}^{|\gamma|+1}(\partial_{{\bf x}}^{\gamma}V_{Q})({\bf
   x}^{0}+\lambda_{Q}{\bf y})|\leq C_{\gamma}(R)\,,
\end{align}
for all $R<1$, ${\bf y} \in B_{3N}(0,R)$.

It follows that, in $B_{3N}(0,1)$,
\begin{align}\label{eq:P-phi-Q-lambda}
  \Big\{-\Delta_{\bf y}-2\lambda_{Q} &H_{Q}^{\lambda_{Q}}({\bf y})\cdot\nabla_{\bf
    y}\nonumber \\&
  +\big[\lambda_{Q}
  V_{Q}^{\lambda_{Q}}({\bf y})+\lambda_{Q}^2(K_{Q}^{\lambda_{Q}}({\bf
    y})-E)\big] \Big\}\psi_Q^{\lambda_Q} = 0\,.
\end{align}
Compare with \eqref{def:P}--\eqref{eq:P-phi-alpha-lambda}.

The proof of Proposition~\ref{prop:main-three} follows by successive
differentiation with respect to $y_Q$ of the equation
\eqref{eq:P-phi-Q-lambda} for $\psi_Q^{\lambda_Q}$, and from applying
elliptic regularity to the resulting equations.
We state the relevant results for $p=\infty$ and $p<\infty$ as
Lemmas~\ref{lem:a-priori-Q} and~\ref{lem:a-priori-Q-L-2} below. One
can compare with Lemmas~\ref{lem:a-priori-lambda}
and~\ref{lem:a-priori-lambda-L-2}.  

\begin{lem}\label{lem:a-priori-Q}
 For all \(\beta\in\N_{0}^{3}\), with \(\beta\neq0\), all
 \(\theta\in(0,1)\), and all
 \(R,r>0\), \(0<r<R<1\), there exists \(C=C(r,R,\beta,\theta,E,N,Z)\)
 such that
\begin{align}\label{eq:a-priori-Q}
  \|\partial_{y_Q}^{\beta}&\psi_{Q}^{\lambda_{Q}}\|_{C^{1,\theta}(B_{3N}(0,r))}\\&\le
  C\big(\lambda_{Q}\|\psi_{Q}^{\lambda_{Q}}\|_{L^{\infty}(B_{3N}(0,R))}
  +\|\nabla_{{\bf y}}(\psi_{Q}^{\lambda_{Q}})\|_{L^{\infty}(B_{3N}(0,R))}\big)\,. 
  \nonumber
\end{align}
\end{lem}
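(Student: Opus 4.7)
The plan is to prove Lemma~\ref{lem:a-priori-Q} by induction on $|\beta|$, in close parallel with the proof of Lemma~\ref{lem:a-priori-lambda}, exploiting the fact that the parallel derivative $\partial_{y_Q}^\beta$ is a constant-coefficient linear differential operator and therefore commutes with $-\Delta_{\bf y}$ and with $\nabla_{\bf y}$. Combined with the crucial vanishing property $\partial_{y_Q}^{\gamma} H_Q^{\lambda_Q} = \partial_{y_Q}^{\gamma} K_Q^{\lambda_Q} = 0$ for all $\gamma \in \N_0^3$, $\gamma \neq 0$ (from \eqref{eq:der-K-H-zero-Q}), this means that applying $\partial_{y_Q}^\beta$ to \eqref{eq:P-phi-Q-lambda} produces an equation of the same form, with an inhomogeneity coming only from derivatives of $V_Q^{\lambda_Q}$ (controlled by \eqref{eq:der-V-alpha-Q}).

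For the base case $|\beta|=1$, I would first apply Theorem~\ref{thm-GT} to \eqref{eq:P-phi-Q-lambda} to conclude that $\psi_Q^{\lambda_Q}\in C^{1,\theta}_{\rm loc}(B_{3N}(0,1))$ with $\|\psi_Q^{\lambda_Q}\|_{C^{1,\theta}(B_{3N}(0,r))}\le C\|\psi_Q^{\lambda_Q}\|_{L^\infty(B_{3N}(0,R))}$. Then I set $\varphi_{Q,\beta}^{\lambda_Q}:=\lambda_Q^{-1}\partial_{y_Q}^{\beta}\psi_Q^{\lambda_Q}$; differentiating \eqref{eq:P-phi-Q-lambda} by $\partial_{y_Q}^\beta$ and dividing by $\lambda_Q$ yields
\begin{align*}
  P\varphi_{Q,\beta}^{\lambda_Q}({\bf y})
  = -(\partial_{y_Q}^{\beta}V_Q^{\lambda_Q})({\bf y})\,\psi_Q^{\lambda_Q}({\bf y})\,,
\end{align*}
where the right-hand side lies in $L^\infty(B_{3N}(0,R))$ by \eqref{eq:der-V-alpha-Q} and the preceding step. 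A further application of Theorem~\ref{thm-GT} then gives $\|\varphi_{Q,\beta}^{\lambda_Q}\|_{C^{1,\theta}(B_{3N}(0,r))} \le C(\|\psi_Q^{\lambda_Q}\|_{L^\infty(B_{3N}(0,R))}+\|\varphi_{Q,\beta}^{\lambda_Q}\|_{L^\infty(B_{3N}(0,R))})$, from which \eqref{eq:a-priori-Q} for $|\beta|=1$ follows after multiplying by $\lambda_Q$ and noting $\partial_{y_Q}^\beta$ is a bounded linear combination of components of $\nabla_{\bf y}$.

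For the induction step, fix $j\ge 1$, assume \eqref{eq:a-priori-Q} holds for all $\beta'$ with $|\beta'|\le j$, and let $|\beta|=j+1$. Apply $\partial_{y_Q}^\beta$ to \eqref{eq:P-phi-Q-lambda} and divide by $\lambda_Q$; using the commutation observations in the first paragraph, only the $V_Q^{\lambda_Q}$ term contributes extra summands via the Leibniz rule, giving
\begin{align*}
  P\varphi_{Q,\beta}^{\lambda_Q}
  = -\sum_{\gamma\le\beta,\,|\gamma|\ge 1}\binom{\beta}{\gamma}
     \bigl(\partial_{y_Q}^{\gamma}V_Q^{\lambda_Q}\bigr)
     \bigl(\partial_{y_Q}^{\beta-\gamma}\psi_Q^{\lambda_Q}\bigr)\,.
\end{align*}
Estimate the right-hand side in $L^\infty(B_{3N}(0,(r+R)/2))$ using \eqref{eq:der-V-alpha-Q} for the coefficients and the inductive hypothesis on $\partial_{y_Q}^{\beta-\gamma}\psi_Q^{\lambda_Q}$ (since $|\beta-\gamma|\le j$). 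Writing $\beta = \beta_j + e$ with $|e|=1$ and applying the inductive hypothesis to $\beta_j$ controls $\|\varphi_{Q,\beta}^{\lambda_Q}\|_{L^\infty}$ (as in \eqref{eq:other-est-nduc-hyp}). Another appeal to Theorem~\ref{thm-GT} closes the induction, yielding \eqref{eq:a-priori-Q} for $|\beta|=j+1$.

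I do not anticipate a genuinely new obstacle: the entire proof of Lemma~\ref{lem:a-priori-lambda} transfers with $\partial_{y_Q}$ in place of $\partial_{\bf y}$, provided one checks the commutation facts recorded above. The only thing worth emphasizing is that, unlike the case of $\partial^\alpha$ where one uses $\partial^\beta F_\alpha = 0$ for $0 < \beta \le \alpha$, here one needs the sharper observation that $F_Q$ and $G_Q$ (hence $K_Q$ and $H_Q$) are annihilated by \emph{all} non-trivial parallel derivatives $\partial_{y_Q}^\gamma$; this is exactly what is ensured by the choice of $F_Q$ in \eqref{def:F-alpha-Q} and the identity \eqref{parallel-der-zero}. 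Once this is in place, the remainder is a verbatim adaptation, including the passage from the $C^{1,\theta}$ norm to the $L^\infty$ norm via $\|\partial_{\bf y}^{e}f\|_{L^\infty}\le \|f\|_{C^{1,\theta}}$, used to bootstrap at each step.
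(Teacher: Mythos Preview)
Your proposal is correct and follows precisely the approach the paper intends: the authors explicitly state that the proof of Lemma~\ref{lem:a-priori-Q} is completely analogous to that of Lemma~\ref{lem:a-priori-lambda} (induction on $|\beta|$, differentiate \eqref{eq:P-phi-Q-lambda}, use \eqref{eq:der-K-H-zero-Q} and \eqref{eq:der-V-alpha-Q}, apply Theorem~\ref{thm-GT}), and omit the details. Your write-up correctly identifies the one point where care is needed---that $\partial_{y_Q}^\gamma$ annihilates $H_Q^{\lambda_Q}$ and $K_Q^{\lambda_Q}$ for \emph{all} $\gamma\neq 0$, not just $\gamma\le\alpha$---and otherwise transfers the argument verbatim.
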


\begin{lem}\label{lem:a-priori-Q-L-2}
For all \(p\in(1,\infty)\), all \(\beta\in\N_{0}^{3}\), with
\(\beta\neq0\), and all 
\(R,r>0\), \(0<r<R<1\), there exists \(C=C(p,r,R,\beta,E,N,Z)\)
such that
\begin{align}\label{eq:a-priori-Q-L-2}
  \|\partial_{y_Q}^{\beta}
  &\psi_{Q}^{\lambda_{Q}}\|_{W^{2,p}(B_{3N}(0,r))}\\&\le
  C\big(\lambda_{Q}\|\psi_{Q}^{\lambda_{Q}}\|_{L^{p}(B_{3N}(0,R))}
  +\|\nabla_{{\bf y}}(\psi_{Q}^{\lambda_{Q}})\|_{L^{p}(B_{3N}(0,R))}\big)\,. 
  \nonumber
\end{align}
\end{lem}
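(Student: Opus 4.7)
The plan is to prove Lemma~\ref{lem:a-priori-Q-L-2} in exact parallel with Lemma~\ref{lem:a-priori-Q}, substituting the $C^{1,\theta}$-regularity statement (Theorem~\ref{thm-GT}) by the $W^{2,p}$-regularity statement (Theorem~\ref{thm-GT-two}) and replacing every $L^{\infty}$-norm on the right-hand side by the corresponding $L^{p}$-norm. The structural input is the rescaled model equation \eqref{eq:P-phi-Q-lambda}, whose coefficients are uniformly bounded on $B_{3N}(0,R)$ for every $R<1$ by \eqref{est:alpha-fcts-Q}, \eqref{eq:der-V-alpha-Q}, and the fact that $\lambda_{Q}\le 1$. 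As in Lemma~\ref{lem:a-priori-Q}, the argument is by induction on $|\beta|$.

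For the base case $|\beta|=1$, set $\varphi_{Q,\beta}^{\lambda_{Q}}:=\lambda_{Q}^{-1}\partial_{y_{Q}}^{\beta}\psi_{Q}^{\lambda_{Q}}$ and differentiate \eqref{eq:P-phi-Q-lambda} by $\partial_{y_{Q}}^{\beta}$. Thanks to \eqref{eq:der-K-H-zero-Q}, all contributions from $H_{Q}^{\lambda_{Q}}$ and $K_{Q}^{\lambda_{Q}}$ vanish, so after dividing by $\lambda_{Q}$ one obtains
\[
  P\varphi_{Q,\beta}^{\lambda_{Q}}=-\bigl(\partial_{y_{Q}}^{\beta}V_{Q}^{\lambda_{Q}}\bigr)\psi_{Q}^{\lambda_{Q}}.
\]
By \eqref{eq:der-V-alpha-Q} the right-hand side is bounded in $L^{p}(B_{3N}(0,R))$ by $C\|\psi_{Q}^{\lambda_{Q}}\|_{L^{p}(B_{3N}(0,R))}$, and Theorem~\ref{thm-GT-two} applied to this equation (on interior balls of slightly smaller radius) yields a $W^{2,p}$-estimate on $\varphi_{Q,\beta}^{\lambda_{Q}}$, hence on $\partial_{y_{Q}}^{\beta}\psi_{Q}^{\lambda_{Q}}$ through the factor $\lambda_{Q}$.

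For the inductive step, assume \eqref{eq:a-priori-Q-L-2} for all multi-indices of order at most $j$ and take $|\beta|=j+1$. Differentiating \eqref{eq:P-phi-Q-lambda} by $\partial_{y_{Q}}^{\beta}$ and using \eqref{eq:der-K-H-zero-Q} once more, the rescaled function $\varphi_{Q,\beta}^{\lambda_{Q}}$ solves $P\varphi_{Q,\beta}^{\lambda_{Q}}=g_{Q,\beta}^{\lambda_{Q}}$ with
\[
  g_{Q,\beta}^{\lambda_{Q}}=-\!\!\sum_{0<\gamma\le\beta}\binom{\beta}{\gamma}\bigl(\partial_{y_{Q}}^{\gamma}V_{Q}^{\lambda_{Q}}\bigr)\partial_{y_{Q}}^{\beta-\gamma}\psi_{Q}^{\lambda_{Q}}.
\]
The coefficients $\partial_{y_{Q}}^{\gamma}V_{Q}^{\lambda_{Q}}$ are controlled by \eqref{eq:der-V-alpha-Q}, while each factor $\partial_{y_{Q}}^{\beta-\gamma}\psi_{Q}^{\lambda_{Q}}$ of order at most $j$ is controlled in $L^{p}$ by the inductive hypothesis, so $g_{Q,\beta}^{\lambda_{Q}}\in L^{p}(B_{3N}(0,(r+R)/2))$ with the desired bound. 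Theorem~\ref{thm-GT-two} then gives a $W^{2,p}$-estimate on $\varphi_{Q,\beta}^{\lambda_{Q}}$ on a smaller ball; to close the bootstrap, write $\beta=\beta_{j}+e_{j}$ with $|\beta_{j}|=j$ and use the trivial bound $\|\varphi_{Q,\beta}^{\lambda_{Q}}\|_{L^{p}}\le \|\partial_{y_{Q}}^{\beta_{j}}\psi_{Q}^{\lambda_{Q}}\|_{W^{1,p}}\le\lambda_{Q}^{-1}\|\partial_{y_{Q}}^{\beta_{j}}\psi_{Q}^{\lambda_{Q}}\|_{W^{2,p}}\cdot\lambda_{Q}$ together with the inductive hypothesis applied to $\beta_{j}$, in exact analogy with \eqref{eq:other-est-nduc-hyp}--\eqref{eq:almost-done-induc}.

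The content of the argument is really the verification that the parallel direction $\partial_{y_{Q}}$ preserves every cancellation that made Lemma~\ref{lem:a-priori-lambda} work: $\partial_{y_{Q}}$ annihilates both the Jastrow-derived drift $H_{Q}^{\lambda_{Q}}$ and the error potential $K_{Q}^{\lambda_{Q}}$ by the choice of $F_{Q}$ (this is \eqref{eq:der-K-H-zero-Q}, ultimately a consequence of \eqref{eq:parallel-diff-e-e}), and the derivatives $\partial_{y_{Q}}^{\gamma}V_{Q}^{\lambda_{Q}}$ are uniformly bounded on compacta of $B_{3N}(0,1)$ by \eqref{eq:der-V-alpha-Q}. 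Granting these two facts, Lemma~\ref{lem:a-priori-Q-L-2} is obtained from Lemma~\ref{lem:a-priori-Q} by the same verbatim transcription $C^{1,\theta}_{\rm loc}\rightsquigarrow W^{2,p}_{\rm loc}$, $C^{1,\theta}(B_{3N}(0,\cdot))\rightsquigarrow W^{2,p}(B_{3N}(0,\cdot))$, $L^{\infty}\rightsquigarrow L^{p}$ (dropping $\theta$ throughout) that was used in the passage from Lemma~\ref{lem:a-priori-lambda} to Lemma~\ref{lem:a-priori-lambda-L-2}; the main point of care is bookkeeping the powers of $\lambda_{Q}\le 1$ so that the constant in \eqref{eq:a-priori-Q-L-2} remains independent of ${\bf x}^{0}$ as $\lambda_{Q}\to 0$.
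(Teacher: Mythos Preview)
Your proposal is correct and follows exactly the approach the paper intends: the paper omits the proofs of Lemmas~\ref{lem:a-priori-Q} and~\ref{lem:a-priori-Q-L-2}, stating they are completely analogous to those of Lemmas~\ref{lem:a-priori-lambda} and~\ref{lem:a-priori-lambda-L-2}, and the passage from the former to the latter is precisely the verbatim transcription $C^{1,\theta}\rightsquigarrow W^{2,p}$, $L^{\infty}\rightsquigarrow L^{p}$, Theorem~\ref{thm-GT}~$\rightsquigarrow$~Theorem~\ref{thm-GT-two} that you describe. Your identification of the two structural inputs---the vanishing \eqref{eq:der-K-H-zero-Q} of $\partial_{y_Q}$-derivatives of $H_Q^{\lambda_Q}$ and $K_Q^{\lambda_Q}$, and the uniform bound \eqref{eq:der-V-alpha-Q} on $\partial_{{\bf y}}^{\gamma}V_Q^{\lambda_Q}$---is exactly what makes the induction go through.
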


Since the proofs of Lemmas~\ref{lem:a-priori-Q}
and~\ref{lem:a-priori-Q-L-2} are completely analogous to the proofs of
Lemmas~\ref{lem:a-priori-lambda} and~\ref{lem:a-priori-lambda-L-2}, we
omit them here. (Note the similarity, but also difference, between 
\eqref{eq:der-K-H-zero-Q} and \eqref{eq:der-K-H-zero}.)
It is also simple to verify that Proposition~\ref{prop:main-three}
follows from Lemmas~\ref{lem:a-priori-Q} and~\ref{lem:a-priori-Q-L-2}
and the definition of $\psi_Q^{\lambda_Q}$ (see
\eqref{def:rescaled-alpha-Q} and \eqref{eq:def_psi_Q}). (Compare with
the proof that Theorem~\ref{thm:main-one} follows from
Lemma~\ref{lem:a-priori-lambda}, situated after
Lemma~\ref{lem:a-priori-lambda}.) 

\section{Proof of Theorem~\ref{thm:main-rho}}
\label{sec:proof-rho}
\begin{pf} To prove (i), 
let \(\rho\) be as in the theorem.
Note that it suffices to prove the 
statement for each \(\rho_j\) in \eqref{rho}. The proof is the same
for each \(j\), and so we shall prove it for \(\rho_1\), which, by
abuse of notation, we shall denote \(\rho\). To ease notation, we
shall write \({\bf x}=(x_1,\ldots,x_N)=(x_1,\hat{\bf x}_{1})\) and
\(\rho=\rho(x_1)\).  

To prove \eqref{est:pointwise-rho},
let \(x_1\in\R^3\setminus\{0\}\) and \(R\in(0,1)\) (the case \(R\ge1\)
obviously  follows from this case). 
Let \(1=\sum_I\chi_I\) be the partition of unity (on \(\R^{3N}\)) from
Lemma~\ref{lem:partition} in Appendix~\ref{sec:partition} below. Then
\begin{align}\label{rho:split}
  \rho(x_1)&=\sum_I
  \int_{\R^{3N-3}}
  \!\!\!  \!\!\!  \!\!\!
  \chi_{I}(x_1,\hat{\bf x}_{1})|\psi(x_1,\hat{\bf x}_{1})|^2\,d\hat{\bf x}_{1}
  =\sum_I\rho_I(x_1)\,,
\end{align}
and so, for all \(\alpha\in\N_0^3\) with \(|\alpha|\ge1\),
\begin{align}\label{rho:split-der}
  (\partial^{\alpha}_{x_1}\rho)(x_1)=\sum_I(\partial_{x_1}^{\alpha}\rho_I)(x_1)
\end{align}
with
\begin{align}\label{rho:der-rho-I}
  (\partial^{\alpha}_{x_1}\rho_I)(x_1)=
  \partial^{\alpha}_{x_1}\Big(\int_{\R^{3N-3}}
  \!\!\!  \!\!\!  \!\!\!
  \chi_{I}(x_1,\hat{\bf x}_{1})|\psi(x_1,\hat{\bf x}_{1})|^2\,
  d\hat{\bf x}_{1}\Big)\,.
\end{align}
It then suffices to prove the estimate in \eqref{est:pointwise-rho} for
each \(\partial^{\alpha}_{x_1}\rho_I\), since the sum in
\eqref{rho:split-der} is finite. 

To this end, recall again  the definition of \(\chi_I\) from
\eqref{chi-I} in Lemma~\ref{lem:partition} below. Let
\(Q:=\{1\}\cup(\cup_{j=0}^{J-1}Q_{j})\subseteq\{1,\ldots,N\}\). 
Re-numbering, we may assume that \(Q=\{1,\ldots,M\}\), \(M\le
N\). In the integral in \eqref{rho:der-rho-I}, make the change of
variables
\begin{align}\label{change-var}
  y_j=x_j-x_1\,,\ j=2,\ldots,M\,.
\end{align}
Then (re-naming \(y_j\) to \(x_j\) again)
\begin{align}\label{after-var-change}
  \int_{\R^{3N-3}}
  \!\!\!  \!\!\!  \!\!\!
  &\chi_{I}(x_1,\hat{\bf x}_{1})|\psi(x_1,\hat{\bf x}_{1})|^2\,
  d\hat{\bf x}_{1}
  \\&\nonumber
  = \int_{\R^{3N-3}}
  \!\!\!  \!\!\!  \!\!\!
  \big(\chi_{I}|\psi|^2\big)(x_1,x_2+x_1,\ldots,x_M+x_1,x_{M+1},\ldots,x_N)
  \,d\hat{\bf x}_{1}
  \\&=
  \int_{\R^{3N-3}}
  \!\!\!  \!\!\!  \!\!\!
  \widetilde\chi_{I}({\bf x})
  \,|\psi|^2(x_1,x_1+x_2,\ldots,x_1+x_M,x_{M+1},\ldots,x_N)
  \,d\hat{\bf x}_{1}\,,
  \nonumber
\end{align}
with \(\widetilde\chi_{I}\) as in 
\eqref{chi-I-tilde} in Lemma~\ref{lem:der-chi's} below (see
\eqref{chi-I} for \(\chi_{I}\)). By Leibniz' rule
\begin{align}\label{der-rho_I}
  (&\partial^{\alpha}_{x_1}\rho_I)(x_1)=
  \sum_{\beta\le\alpha}\binom{\alpha}{\beta}
  \int_{\R^{3N-3}}
  \!\!\!  \!\!\!
  \big(\partial_{x_1}^{\beta}
  \widetilde\chi_{I}({\bf x})\big)
  \times\\&\qquad
  \times\Big(\partial_{x_1}^{\alpha-\beta}
  \big\{|\psi|^2(x_1,x_1+x_2,\ldots,x_1+x_M,x_{M+1},\ldots,x_N)\big\}\Big)
  \,d\hat{\bf x}_{1}\,.\nonumber
\end{align} 
Differentiating under the integral sign can be justified as in \cite[p.~97]{AHP}.
Again by Leibniz' rule, and the chain rule,
\begin{align}\label{der-change-to-x-Q}
  \partial_{x_1}^{\alpha-\beta}&\big\{|\psi|^2(x_1,x_1+x_2,\ldots,x_1+x_M,x_{M+1},\ldots,x_N)\big\}
  \\\nonumber&\!=\sum_{\sigma\le\alpha-\beta}\binom{\alpha-\beta}{\sigma}
  \partial_{x_1}^{\sigma}\big\{\overline{\psi}(x_1,x_1+x_2,\ldots,x_1+x_M,x_{M+1},\ldots,x_N)\big\}
  \\\nonumber&\quad\qquad\quad\ \times
  \partial_{x_1}^{\alpha-\beta-\sigma}\big\{\psi(x_1,x_1+x_2,\ldots,x_1+x_M,x_{M+1},\ldots,x_N)\big\}\,.
\end{align}
Note that, by the chain rule and \eqref{def:der-x_q}, for \(s=1,2,3\),
\begin{align}\label{der-chain-rule}
  \partial_{x_1}^{e_s}&\big\{\psi(x_1,x_1+x_2,\ldots,x_1+x_M,x_{M+1},\ldots,x_N)\big\}
  \\\nonumber&=\sum_{j=1}^{M}\big(\partial_{x_{j,s}}\psi\big)(x_1,x_1+x_2,\ldots,x_1+x_M,x_{M+1},\ldots,x_N)
  \\\nonumber&=\sqrt{M}\big(\partial_{x_Q}^{e_s}\psi\big)(x_1,x_1+x_2,\ldots,x_1+x_M,x_{M+1},\ldots,x_N)\,,
\end{align}
and, by iteration, for \(\sigma\in\N_{0}^{3}\), 
\begin{align}\label{der-end-formula}
  \partial_{x_1}^{\sigma}&\big\{\psi(x_1,x_1+x_2,\ldots,x_1+x_M,x_{M+1},\ldots,x_N)\big\}
  \\\nonumber&=M^{|\sigma|/2}\big(\partial_{x_Q}^{\sigma}\psi\big)(x_1,x_1+x_2,\ldots,x_1+x_M,x_{M+1},\ldots,x_N)\,.
\end{align}
Now apply \eqref{der-change-to-x-Q} and \eqref{der-end-formula}
in \eqref{der-rho_I} above. 
Then estimate \(\partial_{x_1}^{\beta}\widetilde\chi_{I}({\bf x})\)
using Lemma~\ref{lem:der-chi's} below. (For \(|\beta|=|\alpha|\ge1\),
use \eqref{est:der-chi's-BIS} or \eqref{est:der-chi's-BIS-2} with
\(n=1\); for \(\beta<\alpha\), use \eqref{est:der-chi's}.) 
Then re-change variables (\(y_j=x_j+x_1\,,\ j=2,\ldots,M\)) and
re-name them back to \(x_j\), to obtain (for some \(j\in\{2,\ldots,N\}\))
\begin{align}\label{big-step-forward}
  &\big|(\partial^{\alpha}_{x_1}\rho_I)(x_1)\big|
  \le  C|x_1|^{1-|\alpha|}\int_{\R^{3N-3}}\!\!\!\!\!\!\!
  |x_j|^{-1}
  |\psi({\bf x})|^2\,d\hat{\bf x}_{1}
  \\\nonumber&+
  C\sum_{\beta<\alpha}|x_1|^{-|\beta|}\Big(\!\!
  \sum_{\sigma\le\alpha-\beta}\!
  \int_{\R^{3N-3}}\!\!\!\!\!\!\!\!\1_{\supp\,\chi_{I}}({\bf x})
  \big|\big(\partial_{x_Q}^{\sigma}\psi\big)({\bf x})\big|\,
  \big|\big(\partial_{x_Q}^{\alpha-\beta-\sigma}\psi\big) 
  ({\bf x})\big|\,d\hat{\bf x}_{1}\Big)\,.
\end{align}
By Lemma~\ref{partition:control} below, and the fact that \(x_1\neq0\), it
follows that if \({\bf x}\in\supp\,\chi_{I}\), then \({\bf
  x}\in\R^{3N}\setminus\Sigma_{Q}\) (see
\eqref{def:Sigma-parallel}). 
Also note that (see
\eqref{eq:dist-Q-parallel}), for all 
\({\bf x}=(x_1,\ldots,x_N)\in\R^{3N}\), 
\begin{align*}
  d_{Q}({\bf x},\Sigma)&=|x_j| \ \ \text{ for some } j\in Q=\{1,\ldots,M\}
  \intertext{or}
  d_{Q}({\bf x},\Sigma)&=\tfrac{1}{\sqrt{2}}|x_j-x_k| \ \ \text{
  for some } j\in Q,k\in\{M+1,\ldots,N\}\,. 
\end{align*}
In both case, it follows from Lemma~\ref{partition:control} that if
\({\bf x}\in\supp\,\chi_{I}\), then 
\begin{align}\label{bound-d_Q}
  d_{Q}({\bf x},\Sigma)^{1-|\sigma|}\le C|x_1|^{1-|\sigma|}\,.
\end{align}
Also, if \(|x_1|\le1\), then \(d_{Q}({\bf x},\Sigma)\le |x_1|\le1\) (since \(1\in Q\) and
\(|x_1|\le 1\); see also \eqref{eq:dist-Q-parallel}),
so, from \eqref{bound-d_Q},
\begin{align}\label{est-l-Q}
  \lambda_{Q}({\bf x})^{1-|\sigma|}\le 
  C\,r(x_1)^{1-|\sigma|}\,.
\end{align}
(Recall that \(\lambda_{Q}({\bf x})=\min\big\{1, d_{Q}({\bf
  x},\Sigma)\big\}\) and \(r(x_1)=\min\{1,|x_1|\}\).) On the other hand, if
\(|x_1|\ge1\), then either 
\(\lambda_{Q}({\bf x})=d_{Q}({\bf x},\Sigma)\), so that, by
\eqref{bound-d_Q}, and assuming \(\sigma\ne0\), then
\begin{align}\label{est-l-Q-2}
   \lambda_{Q}({\bf x})^{1-|\sigma|}\le C\,|x_1|^{1-|\sigma|}\le C\le C\,r(x_1)^{1-|\sigma|}\,,
 \end{align}
or, \(\lambda_{Q}({\bf x})=1\), in which case 
\eqref{est-l-Q} holds trivially if \(\sigma\ne0\) (since \(C\ge1\) and \(1-|\sigma|\le0\)).

In conclusion, if \({\bf x}\in\supp\,\chi_{I}\), then \eqref{est-l-Q}
holds for all \(\sigma\ne0\).

Hence, applying the estimate \eqref{eq:resultPSI-Q-alpha-parallel} in 
Proposition~\ref{prop:main-three} (with \(p=\infty\),
\(r=R/8,R=R/4\) (!)) for each point \({\bf x}\in\R^{3N}\) for which the
integrand in the second integral in \eqref{big-step-forward} is
non-zero,  and then the {\it a priori} estimate in
Theorem~\ref{eq:AHP-apriori} below
(with \(R=R/4\) (!)) for \(\nabla\psi\),
we get that
\begin{align}\label{another-est}
  \big|(&\partial^{\alpha}_{x_1}\rho_I)(x_1)\big|
  \\\nonumber&\le
  C\,r(x_1)^{1-|\alpha|}\int_{\R^{3N-3}}\!\!\!\!\!\!\!
  |x_j|^{-1}|\psi({\bf x})|^2\,d\hat{\bf x}_{1} 
  \\\nonumber&
  +C\sum_{\beta<\alpha}r(x_1)^{-|\beta|}\Big\{
  r(x_1)^{1-|\alpha|-|\beta|} 
  \int_{\R^{3N-3}}\!\!\!\!\!\!\!
  |\psi({\bf x})|\,
  \|\psi\|_{L^{\infty}(B_{3N}({\bf x},R/2))}
  \,d\hat{\bf x}_{1}
  \nonumber\\&\qquad+
  \sum_{0<\sigma<\alpha-\beta}
  \!\!\!\!
  r(x_1)^{1-|\sigma|}r(x_1)^{1-(|\alpha|-|\beta|-|\sigma|)}
  \int_{\R^{3N-3}}\!\!\!\!\!\!\!
  \|\psi\|_{L^{\infty}(B_{3N}({\bf x},R/2))}^2
  \,d\hat{\bf
    x}_{1}\Big\}\,.
  \nonumber
\end{align}
(The term in the second line comes from \(\sigma=0\) and
\(\sigma=\alpha-\beta\).) 
At this point we can finish the proof using
Proposition~\ref{prop:AprioriEstimatesOnRho} below (with 
\(r=R/2, R=R\)) to get 
\begin{align}
  \big|(&\partial^{\alpha}_{x_1}\rho_I)(x_1)\big| \leq C\,
  r(x_1)^{1-|\alpha|} \int_{B_3(x_1,R)}\rho(y)\,dy\\ \nonumber
  &\le  C\,  r(x_1)^{1-|\alpha|}\|\rho\|_{L^{1}(\R^3)}=
  C\,  r(x_1)^{1-|\alpha|} \|\psi\|_{L^{2}(\R^{3N})}^2\,,
\end{align}
for all $x_1\in\R^{3}\setminus\{0\}$ (and \(\alpha\ne0\)). This finishes the proof of
\eqref{est:pointwise-rho}, and hence of (i).

To prove (ii), for \(p\in[1,\infty)\), let \(a\in[0,\tfrac{p+3}{p})\). Then, since
\(r(x)=\min\{1,|x|\}\), 
\begin{align}\label{eq:rho-Babuschka-1}\nonumber
   \int_{\R^{3}\setminus\{0\}}&\!\!\!\!\!\!\!\!
   \big[r(x)^{|\alpha|-a}\partial^{\alpha}\rho(x)\big]^{p}\,dx\\
   &=\int_{B_3(0,1)\setminus\{0\}}  \!\!\!\!\!\!\!\!\!\!\!\!\!\!\!\!
   \big[|x|^{|\alpha|-a}\partial^{\alpha}\rho(x)\big]^p\,
     dx
  +\int_{\R^{3}\setminus B_3(0,1)}  \!\!\!\!\!\!\!\!\!\!\!\!\!\!\!\!
  |\partial^{\alpha}\rho(x)|^p\,dx\,.
\end{align}
Now, by \eqref{est:pointwise-rho} (for \(R=1\)), 
\begin{align}\label{eq:rho-Babuschka-2}\nonumber
  &\int_{B_{3}(0,1)\setminus\{0\}}
  \!\!\!\!\!\!\!\!\!\!\!\!\!\!\!\!
  \big[|x|^{|\alpha|-a}\partial^{\alpha}\rho(x)\big]^p\,dx 
  \le
   C\int_{B_{3}(0,1)}
   \!\!\!
   \Big(|x|^{1-a}\int_{B_{3}(x,1)}
  \!\!\!\!\!\!\!\!
  \rho(y)\,dy\Big)^p\,dx
   \\&\le C\|\rho\|_{L^{1}(\R^3)}^{p}\int_{B_{3}(0,1)}
  \!\!\!\!\!\!\!\!
  |x|^{p(1-a)}\,dx =
   C_{\alpha}(a,p)   \|\psi\|_{L^{2}(\R^{3N})}^{2p}<\infty\,, 
\end{align}
since \(a<\tfrac{p+3}{p}\) and by the definition \eqref{rho} of \(\rho\).

Furthermore, by \eqref{est:pointwise-rho}, for all
\(x\in\R^3\setminus B_3(0,1)\), 
\begin{align}\label{eq:rho-Babuschka-4}
  |\partial^{\alpha}\rho(x)|^{p-1}\le
  C\Big(\int_{B_3(x,1)}
  \!\!\!\!\!\!\!\!
  \rho(y)\,dy\Big)^{p-1}
  \le C\|\psi\|_{L^{2}(\R^{3N})}^{2(p-1)}\,,
\end{align}
again, by the defintion of \(\rho\).
Hence, again by \eqref{est:pointwise-rho} and Fubini,
\begin{align}\label{eq:rho-Babuschka-5}
  &\int_{\R^{3}\setminus B_3(0,1)}
  \!\!\!\!\!\!\!\!
  |\partial^{\alpha}\rho(x)|^p\,dx
  \le C\|\psi\|_{L^{2}(\R^{3N})}^{2(p-1)}
  \int_{\R^3\setminus B_3(0,1)}
  \!\!\!\!\!\!\!\!
  |\partial^{\alpha}\rho(x)|\,dx 
  \\\nonumber
  &\le
  C\|\psi\|_{L^{2}(\R^{3N})}^{2(p-1)}\int_{\R^3}\Big(\int_{B_3(x,1)}
  \!\!\!\!\!\!\!\!
  \rho(y)\,dy\Big)\,dx
  \\&=C\|\psi\|_{L^{2}(\R^{3N})}^{2(p-1)}\int_{\R^3}\rho(y)\Big(\int_{\R^3}
  \1_{\{|x-y|\le 1\}}
  \,dx\Big)\,dy
  =C_{\alpha}(p) \|\psi\|_{L^{2}(\R^{3N})}^{2p}\,.
  \nonumber
\end{align}
It then follows from \eqref{eq:rho-Babuschka-1},
\eqref{eq:rho-Babuschka-2}, and \eqref{eq:rho-Babuschka-5} that
\begin{align}\label{eq:rho-Babuschka-6}
  \int_{\R^{3}\setminus\{0\}}
  \!\!\!\!\!\!\!\!\!
  \big[r(x)^{|\alpha|-a}\partial^{\alpha}\rho(x)\big]^{p}\,dx
  \le C_{\alpha}(a,p)\|\psi\|_{L^2(\R^{3N})}^{2p}<\infty\,.
\end{align}
This proves (ii).

To prove (iii), note that, for \(|x_1|\le1\), the estimate
\eqref{eq:resultRHO} follows from 
\eqref{est:pointwise-rho}. 
For \(|x_1|>1\), it follows from 
\cite[Theorem~1 (1.10)]{rho-smooth} (using that, for all \(\delta>0\)
and all \(\alpha\in\N_{0}^{3}\) with \(|\alpha|\ge1\), the function
\({\rm e}^{-\delta|x_1|}|x_1|^{|\alpha|-1}\) is uniformly bounded for
\(|x_1|\ge1\)). 

This finishes the proof of Theorem~\ref{thm:main-rho}.
\end{pf}
\appendix
\section{Some new a priori estimates}
\label{sec:our a priori}
In this appendix we state and prove a few results related to the 
{\it a priori} estimate proved in \cite[Theorem~1.2]{AHP} (see also
the discussion in \cite[(19.17)]{Simon-on-Kato}).

We start by recalling that estimate.

\begin{thm}[{\cite[Theorem~1.2]{AHP}}]\label{eq:AHP-apriori}
Let $\psi$ be as in \eqref{eigen}. For all $R \in (0,\infty)$, there
exists $C>0$ such that 
\begin{align}\label{eq:old-AHP}
  \sup_{{\mathbf y} \in B({\mathbf x},R)} | \nabla \psi({\mathbf y})|
  \leq  C \sup_{{\mathbf y} \in B({\mathbf x},2R)} | \psi({\mathbf
    y})| 
\end{align}
for all ${\mathbf x} \in {\mathbb R}^{3N}$.
\end{thm}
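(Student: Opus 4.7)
The plan is to follow the approach of \cite{AHP}, which exploits an idea going back to Leray \cite{Leray}. Introduce the Jastrow factor
\begin{equation*}
\widetilde F(\bx) = -\frac{Z}{2}\sum_{j=1}^{N}|x_j| + \frac{1}{4}\sum_{1\le i<j\le N}|x_i-x_j|\,.
\end{equation*}
Two properties are crucial: first, $\widetilde F$ is globally Lipschitz on $\R^{3N}$ with $\|\nabla \widetilde F\|_{L^{\infty}(\R^{3N})}\le C(N,Z)$; second, $\widetilde F$ satisfies $\Delta\widetilde F = V$ in the distributional sense on all of $\R^{3N}$. The latter follows from the pointwise identities $\Delta_{\R^{3N}}|x_j|=2/|x_j|$ and $\Delta_{\R^{3N}}|x_i-x_j|=4/|x_i-x_j|$ (which explain the coefficients $-Z/2$ and $1/4$ in the definition); no surface measures appear in the limiting integration by parts because the coalescence set has codimension three.

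Set $\phi := e^{-\widetilde F}\psi$. The Lipschitz regularity of $\widetilde F$ together with $\psi \in W^{2,2}_{{\rm loc}}(\R^{3N})$ gives $\phi \in W^{1,2}_{{\rm loc}}(\R^{3N})$, which is enough to justify expanding $-\Delta(e^{\widetilde F}\phi) + V e^{\widetilde F}\phi = E e^{\widetilde F}\phi$ distributionally. Using $\Delta \widetilde F = V$ to cancel the singular potential, $\phi$ is seen to satisfy
\begin{equation*}
-\Delta\phi - 2\,\nabla\widetilde F\cdot\nabla\phi - \bigl(|\nabla\widetilde F|^{2}+E\bigr)\phi = 0 \quad \text{in } \R^{3N}\,,
\end{equation*}
a linear elliptic equation of the form $-\Delta\phi + \mathbf{b}\cdot\nabla\phi + c\,\phi=0$ whose coefficients $\mathbf{b},c$ lie in $L^{\infty}(\R^{3N})$ with norms controlled by $N$, $Z$, and $|E|$ alone.

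Apply standard interior elliptic regularity to this bounded-coefficient equation: a $W^{2,p}$-bootstrap (via Theorem~\ref{thm-GT-two} in Appendix~\ref{sec:app1}, iterated with Sobolev embedding) promotes $\phi$ to $C^{1,\theta}_{{\rm loc}}(\R^{3N})$ for every $\theta\in(0,1)$ and yields the translation-invariant interior estimate
\begin{equation*}
\|\nabla\phi\|_{L^{\infty}(B_{3N}(\bx,R))} \le C(R)\,\|\phi\|_{L^{\infty}(B_{3N}(\bx,2R))}\,,
\end{equation*}
with $C(R)$ independent of $\bx$. To transfer the bound back to $\psi$, differentiate $\psi = e^{\widetilde F}\phi$ to obtain $\nabla\psi = e^{\widetilde F}(\nabla\phi + \phi\,\nabla\widetilde F)$; combined with the uniform bound on $|\nabla \widetilde F|$ and the observation that on any ball of radius $2R$ the ratio $e^{\widetilde F(\by)}/e^{\widetilde F(\bz)}$ is controlled by a constant depending only on $R$, $N$, and $Z$ (since $\widetilde F$ is Lipschitz), one deduces \eqref{eq:old-AHP}.

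The main obstacle is really the very first step: verifying the distributional identity $\Delta \widetilde F = V$ on all of $\R^{3N}$, so that the reduced equation for $\phi$ holds globally and no singular source term is overlooked. Once this is in hand, the remainder of the argument is a (careful but) standard elliptic regularity bootstrap, and the whole scheme is in fact the prototype for the more elaborate arguments carried out in Section~\ref{sec:proof main} and Section~\ref{sec:proof Q} above.
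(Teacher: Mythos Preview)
Your proposal is correct and follows essentially the same strategy the paper attributes to \cite{AHP}: remove the Coulomb singularities via a Jastrow factor, obtain an equation with bounded coefficients, and apply interior elliptic regularity. The paper does not give a full proof of Theorem~\ref{eq:AHP-apriori} (it is recalled from \cite{AHP}); its one-line sketch points to the Ansatz \eqref{def:phi-alpha-bis}.

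There is one minor difference worth flagging. You take the ``bare'' Jastrow factor $\widetilde F(\bx)=-\tfrac{Z}{2}\sum_j|x_j|+\tfrac14\sum_{i<j}|x_i-x_j|$, which is globally Lipschitz but unbounded; you then (correctly) use the Lipschitz bound to control the ratio $e^{\widetilde F(\by)}/e^{\widetilde F(\bz)}$ on balls of fixed radius when passing back from $\phi$ to $\psi$. The paper's version (see \eqref{def:tilde-F} and its use throughout Sections~\ref{sec:proof main}, \ref{sec:proof Q}, and the proof of Proposition~\ref{prop:our a priori}) instead adds the regularizing counterterms $+\tfrac{Z}{2}\sqrt{|x_j|^2+1}$ and $-\tfrac14\sqrt{|x_j-x_k|^2+1}$, so that $\widetilde F$ itself is globally bounded and $e^{\pm\widetilde F}$ are uniformly controlled without any ball-oscillation argument. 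The price is that $\Delta\widetilde F$ no longer equals $V$ exactly, but differs from it by a globally bounded function (the $\widetilde G$ of \eqref{def:tilde-G}), which is harmless for the bounded-coefficient elliptic estimate. Either choice works here; the paper's variant is slightly cleaner when one later needs uniform-in-$\bx$ bounds on $e^{\pm\widetilde F}$ directly, as in the proofs of Theorem~\ref{thm:main-one} and Proposition~\ref{prop:main-three}.
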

The proof of \eqref{eq:old-AHP} is based on an `Ansatz' (see
also \eqref{def:phi-alpha-bis} below) for
the solution of the eigenvalue equation, and then on
using elliptic regularity on the resulting equation.
The objective of this Appendix is the following strengthening of
Theorem~\ref{eq:AHP-apriori}: 
\begin{prop}\label{prop:our a priori}
Let \(H\) be the operator in \eqref{H}. For all \(0<r<R\) and
\(E\in\C\) there exists \(C=C(r,R,E)\) (and also 
depending on \(N,Z\)) such that if \(H\psi=E\psi\), \(\psi\in
W_{{\rm loc}}^{2,2}(\R^{3N})\), then
\begin{align}\label{eq:our a priori}
  \|\psi\|_{L^{\infty}(B_{3N}({\bf x}_0,r))}+\|\nabla \psi\|_{L^{\infty}(B_{3N}({\bf x}_0,r))} \le C
  \|\psi\|_{L^2(B_{3N}({\bf x}_{0},R))} \, 
\end{align}
for all ${\bf x}_{0}\in\R^{3N}$.
\end{prop}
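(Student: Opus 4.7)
The approach mirrors the proof of Theorem~\ref{eq:AHP-apriori}: apply the Jastrow-Leray Ansatz to cancel the Coulomb singularities and reduce to a uniformly elliptic equation with bounded coefficients, but now bootstrap all the way down from $L^{2}$ on the right-hand side rather than stopping at $L^{\infty}$. Set
\begin{align*}
 \widetilde{F}({\bf x}):=-\tfrac{Z}{2}\sum_{j=1}^{N}|x_{j}|+\tfrac{1}{4}\sum_{1\le i<j\le N}|x_{i}-x_{j}|,\qquad \widetilde{\psi}:=e^{-\widetilde{F}}\psi.
\end{align*}
Then $\widetilde{F}$ is globally Lipschitz with constant $L\le C(N,Z)$, and the distributional identities $\Delta_{y}|y|=2/|y|$ in $\R^{3}$ give $\Delta\widetilde{F}=V$. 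Hence $\widetilde{\psi}\in W^{2,2}_{{\rm loc}}(\R^{3N})$ is a weak solution on all of $\R^{3N}$ of
\begin{align*}
 -\Delta\widetilde{\psi}-2\nabla\widetilde{F}\cdot\nabla\widetilde{\psi}-\bigl(|\nabla\widetilde{F}|^{2}+E\bigr)\widetilde{\psi}=0,
\end{align*}
an equation whose coefficients are in $L^{\infty}(\R^{3N})$ with norms depending only on $N,Z,E$ (and in particular independent of ${\bf x}_{0}$).

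Next I would bootstrap from $L^{2}$ to $C^{1,\theta}$ on an interior ball by iterating the interior $W^{2,p}$-estimate of Theorem~\ref{thm-GT-two} together with Sobolev embedding. Fix a finite chain of nested balls $B_{3N}({\bf x}_{0},R_{k})$ with $R=R_{0}>R_{1}>\cdots>R_{m}=r$, and exponents $2=p_{0}<p_{1}<\cdots<p_{m}$ with $p_{m}>3N$, chosen so that $W^{2,p_{k}}\hookrightarrow L^{p_{k+1}}$ on each ball (finitely many steps suffice since $N$ is fixed). Applying Theorem~\ref{thm-GT-two} on the pair $(B_{3N}({\bf x}_{0},R_{k+1}),B_{3N}({\bf x}_{0},R_{k}))$ and then using the Sobolev embedding to re-insert an $L^{p_{k+1}}$-norm on the right, and composing the resulting estimates, one obtains $\widetilde{\psi}\in W^{2,p_{m}}(B_{3N}({\bf x}_{0},r))$ with
\begin{align*}
 \|\widetilde{\psi}\|_{L^{\infty}(B_{3N}({\bf x}_{0},r))}+\|\nabla\widetilde{\psi}\|_{L^{\infty}(B_{3N}({\bf x}_{0},r))}\le C(r,R,E)\,\|\widetilde{\psi}\|_{L^{2}(B_{3N}({\bf x}_{0},R))},
\end{align*}
via $W^{2,p_{m}}\hookrightarrow C^{1,\theta}$. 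Translation invariance of the $L^{\infty}$-bound on the coefficients makes the constant uniform in ${\bf x}_{0}$.

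The final step is to transfer the estimate from $\widetilde{\psi}$ back to $\psi$. The main obstacle is that $\widetilde{F}$ is \emph{not} bounded on $\R^{3N}$, so naively inserting $e^{\pm\widetilde{F}}$ factors would destroy ${\bf x}_{0}$-uniformity. The remedy is to use instead the oscillation bound: since $\widetilde{F}$ is Lipschitz with constant $L\le C(N,Z)$, its oscillation on any ball of radius $R$ is bounded by $LR$, independently of ${\bf x}_{0}$. Writing $\psi=e^{\widetilde{F}}\widetilde{\psi}$ and $\nabla\psi=e^{\widetilde{F}}\bigl(\widetilde{\psi}\,\nabla\widetilde{F}+\nabla\widetilde{\psi}\bigr)$, the $L^{\infty}$-norms of $\psi,\nabla\psi$ on $B_{3N}({\bf x}_{0},r)$ and the $L^{2}$-norm of $\widetilde{\psi}$ on $B_{3N}({\bf x}_{0},R)$ are related to the corresponding norms of $\psi$ through the ratio $e^{\widetilde{F}({\bf x})-\widetilde{F}({\bf y})}$, which is bounded by $e^{2LR}$ uniformly in ${\bf x}_{0}$. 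Combining with $\|\nabla\widetilde{F}\|_{L^{\infty}}\le C(N,Z)$ then delivers \eqref{eq:our a priori} with a constant depending only on $r,R,E$ (and $N,Z$).
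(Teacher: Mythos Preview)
Your proof is correct and follows the same strategy as the paper: the Jastrow Ansatz $\widetilde{\psi}=e^{-\widetilde{F}}\psi$ to produce an equation with globally bounded coefficients, followed by a bootstrap from $L^{2}$ to $C^{1,\theta}$ via iterated interior $W^{2,p}$-estimates and Sobolev embedding. The one genuine difference is your choice of $\widetilde{F}$. The paper takes the \emph{regularised} factor
\[
\widetilde{F}({\bf x})=\sum_{j}\Big({-}\tfrac{Z}{2}|x_{j}|+\tfrac{Z}{2}\sqrt{|x_{j}|^{2}+1}\Big)+\sum_{j<k}\Big(\tfrac14|x_{j}-x_{k}|-\tfrac14\sqrt{|x_{j}-x_{k}|^{2}+1}\Big),
\]
which is \emph{globally bounded} together with its gradient, so that the final transfer from $\widetilde{\psi}$ back to $\psi$ is trivial (multiply by a bounded function). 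You instead use the bare $\widetilde{F}$ and compensate for its unboundedness by the oscillation argument: only the ratio $e^{\widetilde{F}({\bf x})-\widetilde{F}({\bf y})}$ enters, and this is controlled on balls of fixed radius by the Lipschitz constant, uniformly in ${\bf x}_{0}$. Both routes work; yours avoids introducing the regularising terms at the cost of the extra observation at the end, while the paper's choice makes $e^{\pm\widetilde{F}}$ honest multipliers on $L^{p}$ and $L^{\infty}$ simultaneously. One technical point you pass over quickly: to invoke Theorem~\ref{thm-GT-two} at exponent $p_{k}>2$ one must already know $\widetilde{\psi}\in W^{2,p_{k}}_{\rm loc}$; the paper secures this in advance by first applying Theorem~\ref{thm-GT} to get $\widetilde{\psi}\in C^{1,\theta}_{\rm loc}$, reading off $-\Delta\widetilde{\psi}\in L^{p}_{\rm loc}$ from the equation, and then appealing to Theorem~\ref{thm-Grisvard}. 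Your bootstrap can be made self-contained in the same way.
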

\begin{proof}
Define, for \({\bf x}=(x_1,\ldots,x_N)\in\R^{3N}\),
\begin{align}\label{def:tilde-F}\nonumber
  \widetilde{F}({\bf x})&=\sum_{j=1}^{N}
  \big(
  -\tfrac{Z}{2}|x_j|+\tfrac{Z}{2}\sqrt{|x_j|^2+1}\big)
  \\&\quad+\sum_{1\le j<k\le
      N}\big(\tfrac{1}{4}|x_j-x_k|-\tfrac{1}{4}\sqrt{|x_j-x_k|^2+1}\big)\,. 
\end{align}
Note that there exists \(C=C(N,Z)>0\) such that (for \(\Sigma\), see
\eqref{eq:Sigma}) 
\begin{align}\label{eq:bounds-tilde-F}
  |\widetilde{F}({\bf x})|\,,\, |\nabla_{\bf x} \widetilde{F}({\bf x})|\le C
  \quad \text{ for all } {\bf x}\in\R^{3N}\setminus\Sigma\,. 
\end{align}
Next, let (for \(V\), see \eqref{H}) 
\begin{align}\label{def:tilde-G}\nonumber
  \widetilde{G}({\bf x})&=-\Big[\sum_{j=1}^{N}
  \tfrac{Z}{2}\Delta_{{\bf x}}(\sqrt{|x_j|^2+1}) 
  -\sum_{1\le j<k\le N}\tfrac14\Delta_{{\bf x}}(\sqrt{|x_j-x_k|^2+1}) \Big]
  \\&=V({\bf x})- 
  \Delta_{{\bf x}}\widetilde{F}({\bf x})\,.
\end{align}
Since \(|\Delta_{x}(\sqrt{|x|^2+1})|\le 3\) for all
\(x\in\R^3\), there exists  \(C=C(N,Z)>0\) such that
\begin{align}\label{eq:bounds-tilde-G}
  |\widetilde{G}({\bf x})|\le C
  \quad \text{ for all } {\bf x}\in\R^{3N}\,.
\end{align}
Therefore, with 
\begin{align}\label{def:tilde-K}
  \widetilde{K}({\bf x}):= \widetilde{G}({\bf x})-|\nabla_{\bf x}
  \widetilde{F}({\bx })|^{2}-E\,, 
\end{align}
using \eqref{eq:bounds-tilde-F} and \eqref{eq:bounds-tilde-G},
there exists \(C=C(N,Z,E)>0\) such that
\begin{align}\label{eq:bounds-tilde-K}
  |\widetilde{K}({\bf x})|\le C
  \quad \text{ for all } {\bf x}\in\R^{3N}\setminus\Sigma\,.
\end{align}
Define 
\begin{align}\label{def:phi-alpha-bis}
  \widetilde{\psi}:={\rm e}^{-\widetilde{F}}\psi\,,
\end{align}
then, (using that \(H\psi=E\psi\)), \(\widetilde\psi\) satisfies the equation
\begin{align}\label{eq:phi-alpha-bis}
  -\Delta_{\bf x}\widetilde\psi-2\nabla_{\bf x}
  \widetilde{F}\cdot\nabla_{\bf x}\widetilde\psi+\widetilde{K}\widetilde\psi=0\,, 
\end{align}
with 
\begin{align}\label{eq:L-infty coeff}
   \nabla_{\bf x}\widetilde{F}, \widetilde{K}\in L^{\infty}(\R^{3N})\,.
\end{align}
Note that since \(\psi\in W^{2,2}_{\rm loc}(\R^{3N})\), we have that
\(\widetilde{\psi}\in W^{2,2}_{\rm loc}(\R^{3N})\). This follows from
\eqref{eq:bounds-tilde-F} and Hardy's inequality (note that any second
order derivative of \(\widetilde{F}\) behaves like \(|x_j|^{-1}\) and
\(|x_j-x_k|^{-1}\)). 

It follows from Theorem~\ref{thm-GT} in Appendix~\ref{sec:app1}
that \(\widetilde{\psi}\in
C^{1,\theta}_{\rm loc}(\R^{3N})\) for all \(\theta\in(0,1)\). In
particular, since this, \eqref{eq:bounds-tilde-F},
\eqref{eq:bounds-tilde-K}, and
\eqref{eq:phi-alpha-bis} then implies that  
\begin{align}\label{eq:phi-alpha-bis-2}
 -\Delta_{\bf x}\widetilde\psi=2\nabla_{\bf x}
 \widetilde{F}\cdot\nabla_{\bf x}
 \widetilde\psi-\widetilde{K}\widetilde\psi\in 
 L^{p}_{\rm loc}(\R^{3N}) \text{ for all } p\in[1,\infty]\,, 
\end{align}
it follows from Theorem~\ref{thm-Grisvard} 
that \(\widetilde{\psi}\in
W^{2,p}_{\rm loc}(\R^{3N})\) for all \(p\in[2,\infty)\).

It now follows from Theorem~\ref{thm-GT-two} (used on
\eqref{eq:phi-alpha-bis}, with \(p=2\)) that for 
all \(\widetilde{R}, \widetilde{r}>0\) there
exists a constant \(C=C(\widetilde{r},\widetilde{R})\) (depending also on  
\(N,Z,E\) through \eqref{eq:bounds-tilde-F} and
\eqref{eq:bounds-tilde-K}) such that, for all \({\bf
 x}_{0}\in\R^{3N}\),   
\begin{align}\label{eq:first-one}
 \|\widetilde\psi\|_{W^{2,2}(B_{3N}({\bf x}_0,\widetilde{r}))} \le C
 \|\widetilde\psi\|_{L^2(B_{3N}({\bf x}_{0},\widetilde{R}))} \,.
\end{align}

Hence, by Theorem~\ref{thm:Sobolev-embedding}({\rm i}) (Sobolev
embedding; with \(k=2\),
\(n=3N\), and \(p=p_1=2\),
\(q=p_1^*=6N/(3N-4)>2+\frac{8}{3N}=p_1+\frac{8}{3N}\)), and then 
\eqref{eq:first-one}, there exists a
constant \(C=C(\widetilde{r},\widetilde{R})\) such that
\begin{align}\label{eq:second-one}
  \|\widetilde\psi\|_{L^{p_1^*}(B_{3N}({\bf x}_{0},\widetilde{r}))}
  \le C  \|\widetilde\psi\|_{L^2(B_{3N}({\bf x}_{0},\widetilde{R}))} <\infty\,.
\end{align}
Now, using Theorem~\ref{thm-GT-two} again, but this time with \(p=p_2=p_1^*\),
and then \eqref{eq:second-one}, 
we therefore get that, for all \(\hat{r}\in (0,\widetilde{r})\), there
exists a constant \(C=C(\hat{r},\widetilde{r},\widetilde{R})>0\), such that
\begin{align}\label{eq:third-one}
  \|\widetilde\psi\|_{W^{2,p_2}(B_{3N}({\bf x}_0,\hat{r}))} \le C
  \|\widetilde\psi\|_{L^2(B_{3N}({\bf x}_{0},\widetilde{R}))} \,,
\end{align}
with \(p_2>p_1+\tfrac{8}{3N}=2+\tfrac{8}{3N}\).
(Of course the constant \(C\) changes every time.) 

We repeat 
this: Sobolev embedding, in the form of
Theorem~\ref{thm:Sobolev-embedding}({\rm i}) (always with \(k=2\),
\(n=3N\); next time with
\(W^{2,p_2}\) and \(L^{p_2^*}\), \(p_2^*>p_2+\tfrac{8}{3N}\)), and then
Theorem~\ref{thm-GT-two} (with \(p=p_3=p_2^*\)) as long as
\(2p_i<3N\). Note that \(2=p_1<p_2<p_3<\cdots\), with
\begin{align*}
  p_{i+1}=p_i^*=\frac{3Np_i}{3N-2p_i}>p_i+\frac{2p_i^2}{3N}>p_i+\frac{8}{3N}\,,\,
  i=1,2,\ldots \,.
\end{align*}
Hence, we reach \(p_M\) satisfying \(2p_M<3N<2p_{M}^*\) in maximally  
\((3N-2)/(8/3N)+1=(9N^2-6N+8)/8\) steps (that is, \(M\) is smaller
equal this number). As above, the radius of the
smaller ball decreases each time (above, from \(\widetilde{r}\) to
\(\hat{r}\)). However, splitting the original difference
\((R-r)/2=R-(R+r)/2\) in
\(M+1\) equally large parts (we use Theorem~\ref{thm-GT-two} \(M+1\)
times), we get: For all \(0<r<R\) there exists a 
constant \(C=C(r,R)>0\) such that
\begin{align}\label{eq:fourth-one}
  \|\widetilde\psi\|_{W^{2,{p_M}^*}(B_{3N}({\bf x}_0,(r+R)/2))} \le C
  \|\widetilde\psi\|_{L^2(B_{3N}({\bf x}_{0},R))} \,,
\end{align}
with \(2p_M<3N<2{p_M}^*\).

Now use Theorem~\ref{thm:Sobolev-embedding}({\rm ii}) (Morrey's
Theorem): With \(k=2, p={p_M}^*, n=3N\) (so \(kp>n\)), to get, for
some \(\theta\in(0,1)\),
\begin{align}\label{eq:fifth-one}
  \|\widetilde\psi\|_{C^{\theta}(\overline{B_{3N}({\bf x}_{0},(r+R)/2)})} 
  \le C
  \|\widetilde\psi\|_{W^{2,{p_M}^*}(B_{3N}({\bf x}_0,(r+R)/2))}\,. 
\end{align}
Using \eqref{eq:fourth-one}, and that
\(\|\widetilde{\psi}\|_{L^{\infty}}\le\|\widetilde{\psi}\|_{C^{\theta}}\),
  this implies that, for all \(0<r<R\), 
\begin{align}\label{eq:sixth-one}
  \|\widetilde{\psi}\|_{L^{\infty}(B_{3N}({\bf x}_{0},(r+R)/2))}
  \le C \|\widetilde\psi\|_{L^2(B_{3N}({\bf x}_{0},R))} \,,
\end{align}
for some \(C=C(r,R)>0\). 

Hence, using \eqref{eq:phi-alpha-bis}--\eqref{eq:L-infty coeff},
Theorem~\ref{thm-GT} (used on \eqref{eq:phi-alpha-bis}), and
\eqref{eq:sixth-one} give that, for all \(\theta\in(0,1)\), 
\begin{align}\label{eq:seventh-one}\nonumber
  \|\widetilde{\psi}\|_{C^{1,\theta}(B_{3N}({\bf x}_{0},r))}
  &\le C\|\widetilde{\psi}\|_{L^{\infty}(B_{3N}({\bf x}_{0},(r+R)/2))}
  \\&\leq C \|\widetilde\psi\|_{L^2(B_{3N}({\bf x}_{0},R))} \,.
\end{align}

Hence (since
\(\|\widetilde{\psi}\|_{L^{\infty}}+\|\nabla\widetilde{\psi}\|_{L^{\infty}}\le 
\|\widetilde{\psi}\|_{C^{1,\theta}}\)), \eqref{eq:our a priori}
follows, but with \(\widetilde\psi\) 
instead of \(\psi\). It remains to recall that \(\psi={\rm
 e}^{\widetilde{F}}\widetilde\psi\) 
(see \eqref{def:phi-alpha-bis}) with \(\widetilde{F}\) (globally)
Lipschitz (see also \eqref{eq:bounds-tilde-F}), to arrive at
\eqref{eq:our a priori} for \(\psi\).
\end{proof}
As a consequence of Propostion~\ref{prop:our a priori}, we get the
following, which is of independent interest:
\begin{prop}\label{prop:AprioriEstimatesOnRho}
For \(N\ge2\), let \(H\) be the operator in
\eqref{H}.
Then for all \(0<r<R\) and all \(E\in\R\) there exists a constant
$C=C(r,R,E)>0$ such that if \(H\psi=E\psi\), \(\psi\in
W^{2,2}(\R^{3N})\), and if \(\rho\) is the associated
one-electron density as in \eqref{rho}, then,
for all $x_1 \in {\mathbb R}^3$, 
\begin{align}\label{eq:IntegralLinfty_rho_apriori}
  \int_{\R^{3N-3}}\| \psi\|_{L^{\infty}(B_{3N}((x_1, \hat{\bf
      x}_{1}),r))}^2 \,d\hat{\bf x}_{1} &\leq
  C \int_{B_{3}(x_{1},R)}  \!\!\!\!\!\!\!\!\!\!\!\!
  \rho(y_1)\,dy_1
  \,,\\ 
  \label{eq:IntegralLinfty_nabla_rho_apriori}
 \int_{\R^{3N-3}}\| \nabla\psi\|_{L^{\infty}(B_{3N}((x_1, \hat{\bf
      x}_{1}),r))}^2 \,d\hat{\bf x}_{1} &\leq
  C \int_{B_{3}(x_{1},R)} \!\!\!\!\!\!\!\!\!\!\!\!
  \rho(y_1)\,dy_1
  \,,\\ 
  \label{eq:apriori_rho}
  \rho(x_1)=\int_{\R^{3N-3}} | \psi(x_1, \hat{\bf x}_{1})|^2
  \,d\hat{\bf x}_{1}  
  &\leq C \int_{B_{3}(x_{1},R)} \!\!\!\!\!\!\!\!\!\!\!\!
  \rho(y_1)\,dy_1\,,
\end{align}
and
\begin{align}\label{eq:apriori_rho_derivative}
  \big|\nabla\rho(x_1)\big|=\big|\int_{\R^{3N-3}} \nabla_{x_1}(| \psi(x_1, \hat{\bf x}_{1})|^2)
  \,d\hat{\bf x}_{1}\big| \leq C  \int_{B_{3}(x_{1},R)} \!\!\!\!\!\!\!\!\!\!\!\!
  \rho(y_1)\,dy_1\,,
\end{align}
in the sense that, for all \(v\in\R^{3}\), the directional derivative
(exists and) satisfies
\begin{align}\label{eq:apriori_rho_derivative-BIS}
  \big|v\cdot\nabla\rho(x_1)\big|
  \le C |v|  \int_{B_{3}(x_{1},R)} \!\!\!\!\!\!\!\!\!\!\!\!
  \rho(y_1)\,dy_1\,.
\end{align}
Furthermore, for all $b\in [0,3)$ and \(R>0\) there exists
$C=C(b,R,E)>0$ such that  
\begin{align}\label{eq:apriori_rho_coulomb}
  \int_{\R^{3N-3}} |x_2|^{-b} | \psi(x_1, \hat{\bf x}_{1})|^2
  \,d\hat{\bf x}_{1} \leq C \int_{B_{3}(x_{1},R)}
  \!\!\!\!\!\!\!\!\!\!\!\!
  \rho(y_1)\,dy_1  \,. 
\end{align}
\end{prop}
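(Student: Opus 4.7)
The unifying idea is to apply Proposition~\ref{prop:our a priori} at base points of the form \({\bf x}_0=(x_1,\hat{\bf x}_1)\), converting pointwise bounds on \(\psi\) and \(\nabla\psi\) into \(L^2\)-integrals of \(|\psi|^2\) that Fubini then collapses into an integral of \(\rho\) on a ball in \(\R^3\).

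For \eqref{eq:IntegralLinfty_rho_apriori}--\eqref{eq:IntegralLinfty_nabla_rho_apriori}, fix \(r<r_1<R\) and, for each \(\hat{\bf x}_1\), apply Proposition~\ref{prop:our a priori} with radii \((r,r_1)\) to obtain
\[
 \|\psi\|_{L^\infty(B_{3N}((x_1,\hat{\bf x}_1),r))}^2+\|\nabla\psi\|_{L^\infty(B_{3N}((x_1,\hat{\bf x}_1),r))}^2\le C\,\|\psi\|_{L^2(B_{3N}((x_1,\hat{\bf x}_1),r_1))}^2.
\]
Integrating in \(\hat{\bf x}_1\) and interchanging the order of integration, the resulting inner factor \(\int_{\R^{3N-3}}\1_{\{|{\bf y}-(x_1,\hat{\bf x}_1)|\le r_1\}}\,d\hat{\bf x}_1\) is, via \(|{\bf y}-(x_1,\hat{\bf x}_1)|^2=|y_1-x_1|^2+|\hat{\bf y}_1-\hat{\bf x}_1|^2\), either zero (if \(|y_1-x_1|>r_1\)) or the volume of a \((3N{-}3)\)-ball of radius at most \(r_1\) centred at \(\hat{\bf y}_1\). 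This yields \eqref{eq:IntegralLinfty_rho_apriori}--\eqref{eq:IntegralLinfty_nabla_rho_apriori}, while \eqref{eq:apriori_rho} is immediate from \(|\psi(x_1,\hat{\bf x}_1)|^2\le \|\psi\|_{L^\infty(B_{3N}((x_1,\hat{\bf x}_1),r))}^2\).

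For \eqref{eq:apriori_rho_derivative}--\eqref{eq:apriori_rho_derivative-BIS}, I would justify differentiation under the integral sign by dominated convergence: for each unit \(v\in\R^3\) and \(|h|\le h_0\), the fundamental theorem of calculus bounds the difference quotient \(h^{-1}(|\psi(x_1+hv,\hat{\bf x}_1)|^2-|\psi(x_1,\hat{\bf x}_1)|^2)\) by \(2\|\psi\|_{L^\infty(B)}\|\nabla\psi\|_{L^\infty(B)}\) on a slightly enlarged ball \(B\), which is integrable in \(\hat{\bf x}_1\) uniformly in \(h\) by \eqref{eq:IntegralLinfty_rho_apriori}--\eqref{eq:IntegralLinfty_nabla_rho_apriori} and AM--GM. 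Pulling the derivative inside and combining Cauchy--Schwarz with the same two estimates then yields \(|v\cdot\nabla\rho(x_1)|\le C|v|\int_{B_3(x_1,R)}\rho(y_1)\,dy_1\).

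Finally, for \eqref{eq:apriori_rho_coulomb} I split the integral at \(|x_2|=2\). On \(\{|x_2|>2\}\) one has \(|x_2|^{-b}\le 1\) and the claim follows from \eqref{eq:apriori_rho}. On \(\{|x_2|\le 2\}\) I repeat the Fubini argument above with the extra factor \(|x_2|^{-b}\1_{\{|x_2|\le 2\}}\); for fixed \({\bf y}\), the inner \(x_2\)-integration becomes \(\int_{B_3(y_2,r_1)\cap\{|x_2|\le 2\}}|x_2|^{-b}\,dx_2\le \int_{|x_2|\le 2+r_1}|x_2|^{-b}\,dx_2<\infty\), uniformly in \(y_2\), precisely because \(b<3\). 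This integrability threshold on the Coulomb singularity is the only technical point where the hypothesis \(b<3\) is used; everything else reduces to Fubini and Cauchy--Schwarz, so no serious obstacle is expected beyond bookkeeping of the radii \(r<r_1<R\).
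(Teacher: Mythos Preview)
Your proposal is correct and follows essentially the same route as the paper: apply Proposition~\ref{prop:our a priori} pointwise in \(\hat{\bf x}_1\), integrate, and use Fubini together with the product structure \(|{\bf y}-(x_1,\hat{\bf x}_1)|^2=|y_1-x_1|^2+|\hat{\bf y}_1-\hat{\bf x}_1|^2\) to collapse everything to \(\int_{B_3(x_1,R)}\rho\). The one cosmetic difference is in \eqref{eq:apriori_rho_coulomb}: the paper shifts the base point to \((x_1,0,\hat{\bf x}_{1,2})\) so that the \(x_2\)-integral factorizes \emph{before} applying Proposition~\ref{prop:our a priori}, whereas you keep the base point at \((x_1,\hat{\bf x}_1)\) and absorb the singular \(|x_2|^{-b}\) weight into the inner integral \emph{after} Fubini---both arguments are valid and use \(b<3\) at exactly the same place.
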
 
\begin{remark}\label{rem:rho-psi-est}
Note that, for all \(x_1\in\R^{3}\), \(R>0\), 
\begin{align}\label{eq:rho-psi-est}
  \int_{B_{3}(x_{1},R)} \!\!\!\!\!\!\!\!\!\!\!\!
  \rho(y_1)\,dy_1 \le \|\rho\|_{L^1(\R^3)}=
 \|\psi\|_{L^{2}(\R^{3N})}^2<\infty\,.
\end{align}
In particular, it follows from \eqref{eq:apriori_rho_derivative} that \(\rho\)
is globally Lipschitz: \(\rho\in C^{0,1}(\R^{3})\). This was already
known \cite[Theorem~1.11 (i)]{AHP}.
\end{remark}
\begin{proof}
We start by proving \eqref{eq:IntegralLinfty_rho_apriori} and
\eqref{eq:IntegralLinfty_nabla_rho_apriori} from which
the other estimates will follow in a simple manner. 
Using \eqref{eq:our a priori} and Fubini's Theorem,
\begin{align}\label{est:smart}
  \int_{\R^{3N-3}} &\| \psi\|_{L^{\infty}(B_{3N}((x_1, \hat{\bf
      x}_{1}),r))}^2 \,d\hat{\bf x}_{1} 
  \leq C\int_{\R^{3N-3}}
  \| \psi\|_{L^{2}(B_{3N}((x_1, \hat{\bf
      x}_{1}),R))}^2 \,d\hat{\bf x}_{1} \nonumber \\ 
  &=C \int_{\R^{3N}} |\psi({\bf y})|^2\Big( \int_{\{|{\bf
      y}-(x_1,\hat{\bf x}_{1})|\leq R\}}
  \!\!\!\!\!\!\!\!\!\!\!\!\!\!\!\!\!\!\!\!
  d\hat{\bf x}_{1}\qquad
 \Big)\,d{\bf y}.
\end{align}
Now, for all \({\bf y}=(y_1,\hat{\bf y}_{1})\in\R^{3N}\), 
\begin{align}\label{eq:char-fct-trick-1}
  \int_{\{|{\bf
      y}-(x_1,\hat{\bf x}_{1})|\leq R\}}
  \!\!\!\!\!\!\!\!\!\!\!\!\!\!\!\!\!\!\!\!
  d\hat{\bf x}_{1}
  \qquad\le \1_{\{|y_1-x_1|\le R\}} 
   \int_{\{|\hat{\bf y}_{1}-\hat{\bf x}_{1})|\leq R\}}
   \!\!\!\!\!\!\!\!\!\!\!\!\!\!\!\!\!\!\!\!
   d\hat{\bf x}_{1}\qquad\,,
\end{align}
and the last integral equals the volume of \(B_{3N-3}(0,R)\) for all
\(\hat{\bf y}_{1}\in\R^{3N-3}\). Inserting this in \eqref{est:smart}
and using the definition of \(\rho\) in \eqref{rho} finishes the proof of 
\eqref{eq:IntegralLinfty_rho_apriori}. The
proof of \eqref{eq:IntegralLinfty_nabla_rho_apriori} is similar.

To prove \eqref{eq:apriori_rho} notice that 
\begin{align*}
  \int_{\R^{3N-3}} | \psi(x_1, \hat{\bf x}_{1})|^2 \,d\hat{\bf x}_{1}
  \leq  
  \int_{\R^{3N-3}} \| \psi\|_{L^{\infty}(B_{3N}((x_1, \hat{\bf
    x}_{1}),R/2))}^2 \,d\hat{\bf x}_{1} 
\end{align*}
and use \eqref{eq:IntegralLinfty_rho_apriori} with \(r=R/2\).

To prove \eqref{eq:apriori_rho_derivative}
we differentiate and estimate, to get that
\begin{align}\label{the-last-one}
  |\nabla_{x_1}(| &\psi(x_1, \hat{\bf x}_{1})|^2)| \nonumber \\&\leq
  2\| \psi\|_{L^{\infty}(B_{3N}((x_1, \hat{\bf x}_{1}),R/2))} 
  \| \nabla \psi\|_{L^{\infty}(B_{3N}((x_1, \hat{\bf x}_{1}),R/2))}.
\end{align}
Here \eqref{the-last-one} should be understood in terms of directional
derivatives in the same way as in
\eqref{eq:apriori_rho_derivative-BIS}. From
\cite[Proposition~1.5]{AHP} we know that the 
directional derivatives of \(\psi\) exist.

At this point we can use \eqref{eq:our a priori} and finish the
estimate as above.

To prove \eqref{eq:apriori_rho_coulomb} it suffices, using
\eqref{eq:apriori_rho}, to estimate 
\begin{align*}
  \int_{\{|x_2| \leq R/4\}} |x_2|^{-b} | \psi(x_1, \hat{\bf x}_{1})|^2
  \,d\hat{\bf x}_{1}\,. 
\end{align*}
We argue in a similar fashion as above, with \(\hat{\bf x}_{1,2} =
(x_3,\ldots,x_N)\). Since \((x_1,x_2,\hat{\bf x}_{1,2})\in
B_{3N}((x_1, 0, \hat{\bf x}_{1,2}),R/2)\) for all \(|x_2|\le R/4\), we get
from Fubini's Theorem and
\eqref{eq:our a priori} that
\begin{align}\label{est:smart-2}
  &\int_{\{|x_2| \leq R/4\}} 
  |x_2|^{-b} | \psi(x_1, \hat{\bf
    x}_{1})|^2 \,d\hat{\bf x}_{1}\nonumber \\ 
  &\leq \int_{\R^{3N-6}}\Big(\int_{\{|x_2| \leq R/4\}} |x_2|^{-b}\,dx_2\Big) \|
  \psi\|^2_{L^{\infty}(B_{3N}((x_1, 0, \hat{\bf x}_{1,2}),R/2))}
  \,d\hat{\bf x}_{1,2} \nonumber \\ 
  &\leq C(b,R) \int_{\R^{3N-6}}  \| \psi
  \|_{L^{2}(B_{3N}((x_1, 0, \hat{\bf x}_{1,2}),R))}^2\,d\hat{\bf x}_{1,2}
  \nonumber \\ 
  &=C(b,R)\int_{\R^{3N}} |\psi({\bf y})|^2
  \Big(\int_{\{|{\bf y}-(x_1,0,\hat{\bf x}_{1,2})|\leq R\}} 
  \!\!\!\!\!\!\!\!\!\!\!\!\!\!\!\!\!\!\!\!\!\!
  d\hat{\bf x}_{1,2}
  \qquad\Big)\,d{\bf y}\,. 
\end{align}
Here we also used that $b\in [0,3)$.
Now, for all \({\bf y}=(y_1,y_2,\hat{\bf y}_{1,2})\in\R^{3N}\), 
\begin{align}\label{eq:char-fct-trick-1}
  \int_{\{|{\bf y}-(x_1,0,\hat{\bf x}_{1,2})|\leq R\}} 
  \!\!\!\!\!\!\!\!\!\!\!\!\!\!\!\!\!\!\!\!\!\!
  d\hat{\bf x}_{1,2}
  \qquad\le \1_{\{|y_1-x_1|\le R\}} 
   \int_{\{|\hat{\bf y}_{1,2}-\hat{\bf x}_{1,2})|\leq R\}}
   \!\!\!\!\!\!\!\!\!\!\!\!\!\!\!\!\!\!\!\!
   d\hat{\bf x}_{1,2}\qquad\,,
\end{align}
and the last integral equals the volume of \(B_{3N-6}(0,R)\) for all
\(\hat{\bf y}_{1,2}\in\R^{3N-6}\). Inserting this in \eqref{est:smart-2}
and using the definition of \(\rho\) in \eqref{rho} finishes the proof
of \eqref{eq:apriori_rho_coulomb}. 
\end{proof}

\section{A partition of unity}
\label{sec:partition}
In this appendix we gather various facts about a particular partition
of unity (on \(\R^{3N}\)), needed when studying the electron density
\(\rho\); see Section~\ref{sec:proof-rho}.

We denote by $C_b^{\infty}(\Omega)$
the set of all smooth functions on \(\Omega\) which are bounded
together with all their derivatives. 

Let \(\chi_{1}, \chi_{2}\in C_{b}^{\infty}(\R)\),
\(0\le\chi_{i}\le1\), \(i=1,2\), \(\chi_1, \chi_2\) both monotone,
with  
\begin{eqnarray}\label{partition:supports}
  \chi_{1}(t)=\left\{\begin{array}{ll}
   1\,, & t \le 1/4\,,\\
   0\,, & t \ge 3/4\,,
   \end{array}\right.
   \quad \text{ and }\quad
  \chi_{2}(t)=\left\{\begin{array}{cc}
   0\,, & t \le 1/4\,,\\
   1\,, & t \ge 3/4\,,
   \end{array}\right.
\end{eqnarray}
and 
\begin{align}\label{partition}
  \chi_{1}(t)+\chi_{2}(t)=1 \text{ for all }t\in\R\,.
\end{align}
The partition of unity depends on an index $I \in X$, where $X =
\cup_{J=0}^{N-1} X_J$, with the \(X_J\)'s to be described below. Here 
\begin{align*}
  X_{J=0} = \{ (0, \{2,\ldots,N\}, \emptyset)\},
\end{align*}
and the corresponding function in the partition of unity is (with 
\({\bf x}=(x_1,\ldots,x_N)\in\R^{3N}\)),
\begin{align}\label{first-fct}
  \chi_{(0, \{2,\ldots,N\}, \emptyset)}({\bf x})=
  \prod_{j\in
     \{2,\ldots,N\}}\chi_{1}\left(\frac{|x_j|}{|x_1|}\right).
\end{align}
For $J\geq1$, $X_J$ consists of all elements of the form
$(J,P_J,Q_{J-1},\ldots,Q_0)$ with  
 \(Q_0,Q_1,\ldots,Q_{J-1},P_J\subset\{2,\ldots,N\}\) disjoint, and
\(P_J\cup\big(\cup_{s=0}^{J-1}Q_s\big)=\{2,\ldots,N\}\) (possibly
\(P_J=\emptyset\) or \(Q_s=\emptyset, s\ge1\)).
The corresponding function is (with \(\prod_{j\in\emptyset} = 1\))
\begin{align}\label{chi-I}
  &\chi_I({\bf x})=\chi_{(J,P_J, Q_{J-1},\ldots,Q_0)}({\bf x})
   \\\notag
   &=\left[\prod_{j\in
     P_{J}}\chi_{1}\left(\frac{4^J|x_j|}{|x_1|}\right)\right]
  \left[\prod_{j\in
      Q_{J-1}}\chi_{2}\left(\frac{4^{J-1}|x_j|}{|x_1|}\right)\chi_{1}\left(\frac{4^{J-2}|x_j|}{|x_1|}\right)\right] 
   \times
   \\\notag
   &\times\cdots\times \left[\prod_{j\in
      Q_{s}}\chi_{2}\left(\frac{4^{s}|x_j|}{|x_1|}\right)\chi_{1}\left(\frac{4^{s-1}|x_j|}{|x_1|}\right)\right] 
  \times\cdots\times\\\notag
  &\times\left[\prod_{j\in
      Q_{1}}\chi_{2}\left(\frac{4^{1}|x_j|}{|x_1|}\right)\chi_{1}\left(\frac{4^{0}|x_j|}{|x_1|}\right)\right] 
   \left[\prod_{j\in
     Q_{0}}\chi_{2}\left(\frac{4^0|x_j|}{|x_1|}\right)\right]\,.
\end{align}
\begin{lemma}\label{lem:partition}
 Let \(\chi_{1}\) and \(\chi_{2}\) be as above (see
 \eqref{partition:supports}--\eqref{partition}), then (as functions of 
   \({\bf x}=(x_1,\ldots,x_N)\in\R^{3N}\)), 
\begin{align}\label{partition-lem}
  1 = \sum_{I}\chi_{I}\,,
\end{align}
where the sum is over a subset of $X$.
\end{lemma}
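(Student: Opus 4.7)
\medskip\noindent\textbf{Proof plan.}
The proof rests on a telescoping argument built from $\chi_1+\chi_2\equiv 1$. The key support-based observation is that for every $s\in\N$ and $t\in\R$,
$$\chi_1(4^{s-1}t)\,\chi_1(4^s t)=\chi_1(4^s t),$$
because wherever $\chi_1(4^s t)\neq 0$ one has $4^s t\le 3/4$, hence $4^{s-1}t\le 3/16<1/4$, and $\chi_1\equiv 1$ there. Multiplying $\chi_1(4^{s-1}t)$ by $1=\chi_1(4^s t)+\chi_2(4^s t)$ then yields
$$\chi_1(4^{s-1}t)=\chi_1(4^s t)+\chi_1(4^{s-1}t)\chi_2(4^s t).$$
An induction on $J\ge 0$, starting from $1=\chi_1(t)+\chi_2(t)$ and recursively feeding this identity back into the $\chi_1$-term, produces the one-variable decomposition
$$1=\chi_2(t)+\sum_{s=1}^{J}\chi_1(4^{s-1}t)\chi_2(4^s t)+\chi_1(4^J t).$$

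Next I would apply this identity with $t=t_j:=|x_j|/|x_1|$ simultaneously for $j=2,\ldots,N$ and take the product, obtaining
$$1=\prod_{j=2}^{N}\Bigl[\chi_2(t_j)+\sum_{s=1}^{J}\chi_1(4^{s-1}t_j)\chi_2(4^s t_j)+\chi_1(4^J t_j)\Bigr].$$
Expanding this exhibits $1$ as a sum of monomials, each obtained by choosing, for every particle $j\in\{2,\ldots,N\}$, exactly one of the summands above. Collecting the $j$'s that picked $\chi_1(4^J t_j)$ into a set $P_J$, those that picked $\chi_2(t_j)$ into $Q_0$, and those that picked $\chi_1(4^{s-1}t_j)\chi_2(4^s t_j)$ into $Q_s$ ($s=1,\ldots,J$), each monomial matches a function $\chi_I$ from \eqref{chi-I} labelled by a tuple $I=(J,P_J,Q_{J-1},\ldots,Q_0)$. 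Since one has only $N-1$ particles and hence at most $N-1$ distinct ``scales'' to distinguish, choosing $J\le N-1$ (possibly adaptively in ${\bf x}$) suffices, so the sum ranges over a subset of $X=\bigcup_{J=0}^{N-1}X_J$; the single $X_0$-element $\prod_j\chi_1(t_j)$ accounts for the monomial in which every particle lands in the coarsest ``close'' regime.

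The main obstacle will be the combinatorial bookkeeping: fixing a convention (for instance, taking $J$ to be the smallest level that can accommodate the assignment) so that each monomial in the expansion matches exactly one tuple $I$ in the chosen subset of $X$, and verifying that no configuration is double-counted. Once that convention is in place, the identity $1=\sum_I\chi_I$ drops out directly from the multi-variable expansion above.
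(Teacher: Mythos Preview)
Your key observation --- that $\chi_1(4^{s-1}t)\,\chi_1(4^s t)=\chi_1(4^s t)$ and hence the one-variable telescoping identity --- is exactly what drives the paper's proof as well. The difficulty lies in how you pass to several particles.

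If you apply the one-variable identity with the \emph{same} depth $J$ to every $t_j$ and then multiply, the resulting monomials are in general \emph{not} of the form $\chi_I$ for any $I\in X$. Take $N=3$ and $J=1$: the monomial $\chi_1(4t_2)\,\chi_1(t_3)\chi_2(4t_3)$ forces $j=2\in P_J$ with $J=1$ (its factor is $\chi_1(4^1t_2)$), while $j=3$ must lie in $Q_1$ (its factor is $\chi_2(4^1 t_3)\chi_1(4^0 t_3)$), and a slot $Q_1$ only exists in tuples with label $J\ge 2$. No $I\in X$ accommodates this monomial, so no ``bookkeeping convention'' can assign it a tuple. (In this example the four unassignable monomials actually sum to $\chi_1(t_2)\chi_1(t_3)=\chi_{(0,\{2,3\},\emptyset)}$, so the identity is of course consistent --- but recovering that requires \emph{regrouping}, not labelling.) The phrase ``choosing $J$ adaptively in $\mathbf{x}$'' cannot help either: the identity must hold as functions, with a fixed index set.

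The paper avoids this by refining \emph{only} the particles currently in the $\chi_1$-regime. Starting from $1=\prod_{j}[\chi_1(t_j)+\chi_2(t_j)]$, one expands; in each term with $p_0=\{j:\text{chose }\chi_1\}$ and $q_0=\{j:\text{chose }\chi_2\}$ both nonempty one inserts $1=\prod_{j\in p_0}[\chi_1(4t_j)+\chi_2(4t_j)]$ and simplifies via your key identity. A branch terminates as soon as $p_k=\emptyset$ or $q_k=\emptyset$; since $|p_k|$ strictly decreases along continuing branches, everything stops within $N-1$ levels. Because the depth is tied to the branch rather than fixed globally, every terminal product is automatically a $\chi_I$ with the correct label $J$. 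Your plan needs this modification: apply the telescoping step recursively to the current $p_k$, not uniformly to all particles at once.
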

\begin{pf}
To ease notation, let, for \({\bf
  x}=(x_1,\ldots,x_N)\in\R^{3N}\), 
\begin{align}\label{partition:notation}
  \chi_{i,j}^{s}({\bf x})=\chi_{i}\Big(\frac{4^{s}|x_j|}{|x_1|}\Big)\,,
  i=1,2\,,\, j=2,\ldots,N\,,s=0,1,2,\ldots\,.
\end{align}
Note that, by \eqref{partition:supports}, for all \(j\) and \(s=1,2,\ldots\),
\begin{align}\label{partition: product-prop}
  \chi_{1,j}^{s}\chi_{1,j}^{s-1}=\chi_{1,j}^{s}\,.
\end{align}
Using \eqref{partition} we have (again, with \(\prod_{j\in\emptyset} = 1\))
\begin{align}\label{part:first insert}
  1=
  \prod_{j=2}^{N}\Big[\chi_{1,j}^{0}+\chi_{2,j}^{0}\Big]
  =\sum_{p_0\cup q_0=\{2,\ldots,N\},p_0\cap q_0=\emptyset}
  \Big[\prod_{j\in p_0} \chi_{1,j}^{0} \Big]
  \Big[\prod_{j\in q_0}\chi_{2,j}^{0}\Big]\,.
\end{align}

The term in \eqref{part:first insert} with \(q_0=\emptyset\) equals
\begin{align}\label{partition:first p}
   \prod_{j\in \{2,\ldots,N\}}\chi_{1,j}^{0} = 
   \chi_{(0,\{2,\ldots,N\},\emptyset)}\,.
\end{align}

The term in \eqref{part:first insert} 
with \(p_0=\emptyset\) equals
\begin{align}\label{partition:first q}
  \prod_{j\in \{2,\ldots,N\}}\chi_{2,j}^{0} = 
   \chi_{(1,\emptyset,\{2,\ldots,N\})}\,.
\end{align}
 
For all other terms \(\chi_{p_0,q_0}=\Big[\prod_{j\in p_0} \chi_{1,j}^{0} \Big]
  \Big[\prod_{j\in q_0}\chi_{2,j}^{0}\Big]\) in \eqref{part:first insert} we
have \(q_0\neq\emptyset\neq p_0\), and so
\(0< \#p_0 < \# \{2,\ldots,N\}=N-1\). In each of these terms, insert a
factor of (recall \eqref{partition:notation} and \eqref{partition})
\begin{align}\label{part:second insert}
  1 &=  \prod_{j\in p_0}\Big[\chi_{1,j}^{1}+\chi_{2,j}^{1}\Big]\,,
\end{align}
and multiply out, to get
\begin{align}\label{partition: mess}
  \chi_{p_0,q_0}&=
  \Big[\prod_{j\in p_0}\chi_{1,j}^{0}\Big] \cdot1\cdot
  \Big[\prod_{j\in q_0}\chi_{2,j}^{0}\Big]
  \\&=\sum_{q_1\cup p_1=p_0, q_1\cap p_1=\emptyset}
  \Big[\prod_{j\in p_0}\chi_{1,j}^{0}\Big]
  \Big[\prod_{j\in p_1}\chi_{1,j}^{1}\Big]
  \Big[\prod_{j\in q_1}\chi_{2,j}^{1}\Big]
  \Big[\prod_{j\in q_0}\chi_{2,j}^{0}\Big]\,.
  \notag
\end{align}
By \eqref{partition: product-prop}, 
\(\chi_{1,j}^{0}\chi_{1,j}^{1}=\chi_{1,j}^{1}\)
for all \(j\in p_1\subseteq p_0\), and so,
since \(p_0=q_1\cup p_1\), each of the terms in the sum in 
\eqref{partition: mess} is of the form
\begin{align}\label{partition: generic 1}
  \chi_{p_1,q_1,q_0}=\Big[\prod_{j\in p_1}\chi_{1,j}^{1}\Big]
  \Big[\prod_{j\in q_1}\chi_{2,j}^{1}\chi_{1,j}^{0}\Big]
  \Big[\prod_{j\in q_0}\chi_{2,j}^{0}\Big]\,.
 \end{align}
As before, the term with \(p_1=\emptyset\) (that is, \(q_1=p_0\)) equals
\begin{align}\label{partition:second q}
  \Big[\prod_{j\in q_1}\chi_{2,j}^{1}\chi_{1,j}^{0}\Big]
  \Big[\prod_{j\in q_0}\chi_{2,j}^{0}\Big]
  = \chi_{(2,\emptyset,q_1,q_0)}
\end{align} 
and the term with \(q_1=\emptyset\) (that is, \(p_1=p_0\)) equals 
\begin{align}\label{partition:second p}
  \Big[\prod_{j\in p_1}\chi_{1,j}^{1}\Big]
  \Big[\prod_{j\in q_0}\chi_{2,j}^{0}\Big]
    = \chi_{(1,p_1,q_0)}\,.
\end{align}

For the rest of the terms in \eqref{partition: generic 1}, we have
\(q_1\neq\emptyset\neq p_1\), and so
\(0<\# p_1<\# p_0 < N-1\), that is, \(0<\# p_1<N-2\). For each of
these terms \(\chi_{p_1,q_1,q_0}\) 
(with \(p_1\cup q_1\cup q_0=\{2,\ldots,N\}\), \(p_1,q_1,q_0\)
disjoint), insert a factor of   
\begin{align}\label{partition: third factor}
  1 = \prod_{j\in p_1}\Big[\chi_{1,j}^{2}+\chi_{2,j}^{2}\Big]\,,
\end{align}
 and proceed as above, using \eqref{partition: product-prop} with
 \(s=2\), to write
 \(\chi_{p_1,q_1,q_0}\) as a sum (over \(p_2,q_2\) with \(p_2\cup
 q_2=p_1\), \(p_2\cap q_2=\emptyset\)) of terms of the form
 \begin{align}\label{partition: generic 2}
   \chi_{p_2,q_2,q_1,q_0}=\Big[\prod_{j\in p_2}\chi_{1,j}^{2}\Big]
   \Big[\prod_{j\in q_2}\chi_{2,j}^{2}\chi_{1,j}^{1}\Big]
   \Big[\prod_{j\in q_1}\chi_{2,j}^{1}\chi_{1,j}^{0}\Big]
   \Big[\prod_{j\in q_0}\chi_{2,j}^{0}\Big]\,.
 \end{align}
Again, the terms with $p_2 = \emptyset$ or $q_2 = \emptyset$ have (see
\eqref{chi-I}) the correct form
(namely, with \(J=3\), \(P_3=p_2\), \(Q_i=q_i, i=0,1,2\),
\(I=(3,\emptyset,Q_2,Q_1,Q_0)\in 
X_{3}\), and,
respectively, with \(J=2\), \(P_2=p_2, Q_i=q_i, i=0,1\), \(I=(2,
P_2,Q_1,Q_0)\in X_{2}\)). 
Furthermore, for all other terms
\(\chi_{p_2,q_2,q_1,q_0}\) in \eqref{partition: generic 2}, we have
\(q_2\neq \emptyset\neq p_2\), hence, 
 \(0<\# p_2 < \# p_1<N-2\), that is, \(0<\# p_2 < N-3\). 
Continuing like this, we get a sum of terms of the form in
\eqref{chi-I}, with the size of \(p_j\) diminishing at each step,
until \(\# p_k=1\) (which occurs for \(k=N-3\)). Then the above two
possibilities---\(p_k=\emptyset\) or \(q_k=\emptyset\)---are the only
two, and we are done. \end{pf}
The localization functions \(\chi_{I}\) above are constructed 
in order to have the following lemma, bounding certain terms in the
Coulomb-potential by \(|x_1|^{-1}\), on the support of
\(\chi_{I}\). 
\begin{lemma}\label{partition:control}
Let \(\chi_{1}\) and \(\chi_{2}\) be as in
\eqref{partition:supports}--\eqref{partition}, and define \(\chi_{I}\)
as in \eqref{chi-I}.

 Then there exists a constant \(C=C(N)>0\) such that for all \({\bf
   x}=(x_1,\ldots,x_N)\in\supp\,\chi_I\):
 \begin{align}\label{partition:est-control-one}
   |x_j|^{-1}&\le C |x_1|^{-1} 
   \ \ \text{ for all } j\in \cup_{j=0}^{J-1}Q_{j}\,,\\
   \label{partition:est-control-two} 
   |x_1-x_j|^{-1}&\le C |x_1|^{-1}
   \ \ \text{ for all } j\in \big(\cup_{j=1}^{J-1}Q_{j}\big) \cup P_{J}\,,\\
   |x_j-x_k|^{-1}&\le C |x_1|^{-1}
   \ \ \text{ for all } j\in P_J, k\in \cup_{j=0}^{J-1}Q_{j}\,.
   \label{partition:est-control-three}
 \end{align}
\end{lemma}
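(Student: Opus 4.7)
The proof is a direct computation that extracts size information on each $|x_j|/|x_1|$ from the nonvanishing of the factors in the definition \eqref{chi-I} of $\chi_I$, and then applies the triangle inequality. The plan is as follows.

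First I would record the basic consequences of the support properties of $\chi_1$ and $\chi_2$ in \eqref{partition:supports}. For any ${\bf x}=(x_1,\ldots,x_N) \in \supp\,\chi_I$ with $I=(J, P_J, Q_{J-1},\ldots,Q_0)$: if $j \in P_J$, then the factor $\chi_1(4^{J}|x_j|/|x_1|)$ being nonzero gives $|x_j| \le \tfrac{3}{4}\cdot 4^{-J}|x_1|$; if $j \in Q_s$ with $s \ge 1$, then the two factors $\chi_2(4^s|x_j|/|x_1|)\chi_1(4^{s-1}|x_j|/|x_1|)$ being simultaneously nonzero give the two-sided bound
\begin{equation*}
\tfrac{1}{4}\cdot 4^{-s}|x_1|\,\le\,|x_j|\,\le\,3\cdot 4^{-s}|x_1|\,;
\end{equation*}
and if $j \in Q_0$, then $|x_j| \ge \tfrac{1}{4}|x_1|$. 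The $J=0$ case of \eqref{first-fct} simply gives $|x_j| \le \tfrac{3}{4}|x_1|$ for all $j \in \{2,\ldots,N\}$ and there are no $Q_s$'s.

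For \eqref{partition:est-control-one}, given $j \in \bigcup_{s=0}^{J-1} Q_s$, the lower bound above yields $|x_j| \ge \tfrac{1}{4}\cdot 4^{-(J-1)}|x_1|$, hence $|x_j|^{-1} \le 4^{J}|x_1|^{-1}$. For \eqref{partition:est-control-two}, I would split into two cases: if $j \in P_J$ (allowing $J=0$ via \eqref{first-fct}), then $|x_j| \le \tfrac{3}{4}|x_1|$ in all cases, so the reverse triangle inequality gives $|x_1-x_j| \ge |x_1|-|x_j| \ge \tfrac{1}{4}|x_1|$; if $j \in Q_s$ with $1 \le s \le J-1$, then $|x_j| \le 3\cdot 4^{-s}|x_1| \le \tfrac{3}{4}|x_1|$, and again $|x_1-x_j| \ge \tfrac{1}{4}|x_1|$. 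In both cases $|x_1-x_j|^{-1} \le 4|x_1|^{-1}$.

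For \eqref{partition:est-control-three}, given $j \in P_J$ and $k \in Q_s$ with $0 \le s \le J-1$, I would combine the two one-sided bounds to obtain
\begin{equation*}
|x_j-x_k|\,\ge\,|x_k|-|x_j|\,\ge\,\tfrac{1}{4}\cdot 4^{-s}|x_1|-\tfrac{3}{4}\cdot 4^{-J}|x_1|\,.
\end{equation*}
Using $s \le J-1$, i.e.\ $4^{-s} \ge 4\cdot 4^{-J}$, the first term dominates: $\tfrac{1}{4}\cdot 4^{-s} - \tfrac{3}{4}\cdot 4^{-J} \ge 4^{-J}-\tfrac{3}{4}\cdot 4^{-J}=\tfrac{1}{4}\cdot 4^{-J}$, so $|x_j-x_k|^{-1} \le 4^{J+1}|x_1|^{-1}$. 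Taking the maximum over the finitely many possible values of $J \in \{0,1,\ldots,N-1\}$ yields a constant $C=C(N)$ valid on $\supp\,\chi_I$ simultaneously for all $I$.

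The proof is essentially bookkeeping; the only subtle point is making sure the strict separation $s \le J-1 < J$ in \eqref{partition:est-control-three} is used to dominate the $|x_j|$ term by the $|x_k|$ term, which is precisely why the scales $4^s$ and $4^J$ in \eqref{chi-I} are chosen with a factor of $4$ gap (rather than $2$).
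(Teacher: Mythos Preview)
Your proof is correct and follows essentially the same approach as the paper's own proof: extracting the upper and lower bounds on $|x_j|/|x_1|$ from the support conditions of the $\chi_1$- and $\chi_2$-factors in \eqref{chi-I}, and then applying the reverse triangle inequality. Your treatment is in fact slightly more careful in distinguishing the $Q_0$ case from the $Q_s$ ($s\ge1$) cases, and your closing remark about the necessity of the factor-of-$4$ gap is exactly the content of the paper's remark following the proof.
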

\begin{pf}
To prove \eqref{partition:est-control-one} note that, since
\(\chi_I({\bf x})\neq0\), for all the stated \(j\)'s we have
\(\chi_2(4^s|x_j|/|x_1|)\neq0\) for some \(s\in\{1,\ldots,J-1\}\),
\(J\le N\). Hence, by \eqref{partition:supports}, 
\begin{align*}
  |x_j|\ge \frac{1}{4}\frac{1}{4^s}|x_1|\ge \frac{1}{4}\frac{1}{4^N}|x_1|=c_{N}|x_1|\,,
\end{align*}
which proves \eqref{partition:est-control-one}.

To prove \eqref{partition:est-control-two}, note that, for
\(j\in P_J\), we have
\(\chi_1(4^J|x_j|/|x_1|)\neq0\). Hence, by \eqref{partition:supports},
\(|x_j|\le \tfrac34|x_1|\), and so \(|x_1-x_j|\ge \tfrac14|x_1|\) for
these \(j\).

On the other hand, for \(j\in Q_1\cup\ldots\cup Q_{J-1}\),
\(\chi_1(4^{s-1}|x_j|/|x_1|)\neq0\) for some \(s\in\{1,\ldots,J-1\}\) ,
\(J\le N\). Hence, by \eqref{partition:supports}, \(|x_j|\le
\tfrac{3}{4}\tfrac{1}{4^{s-1}}|x_1|\le \tfrac34|x_1|\), and so
\eqref{partition:est-control-two} holds also for these \(j\)'s. 

Finally, to prove \eqref{partition:est-control-three}, note that for the
stated \(j\)'s, we have \(|x_j|\le \tfrac34\tfrac{1}{4^J}|x_1|\), and
for the stated \(k\)'s, we have, for some \(s\in\{1,\ldots,J-1\}\), 
\begin{align*}
  |x_k|\ge\frac14\frac{1}{4^{s}}|x_1|\ge \frac14\frac{1}{4^{J-1}}|x_1|\,.
\end{align*}
Therefore, 
\begin{align*}
  |x_j-x_k|&\ge |x_k|-|x_j|\ge
  \frac{1}{4}\frac{4}{4^J}|x_1|-\frac34\frac{1}{4^J}|x_1| 
  \\&=\frac{1}{4}\frac{1}{4^J}|x_1|\ge \frac{1}{4}\frac{1}{4^N}|x_1|=c_N|x_1|\,,
\end{align*}
which proves \eqref{partition:est-control-three}.
\end{pf}
\begin{remark}
This last argument is the reason why we need \(4^J\) in the \(\chi_1\)
in the \(P_J\)-factor, and at most \(4^{J-1}\) in the \(\chi_2\) in the \(Q_s\)-factors, in 
\eqref{chi-I}.
\end{remark}
The next lemma uses the previous one, to control derivatives with
respect to \(x_1\) of (a slightly changed version of) the 
localization functions \(\chi_{I}\).
\begin{lemma}\label{lem:der-chi's}
Let \(\chi_{1}\) and \(\chi_{2}\) be as in
\eqref{partition:supports}--\eqref{partition}, 
and let $\chi_I$ be as in Lemma~\ref{lem:partition}.
For  \({\bf x}=(x_1,\ldots,x_N)\in\R^{3N}\),  define
${\bf \tilde{x}} =(\tilde{x}_1,\ldots, \tilde{x}_N)$ with 
\begin{align*}
\tilde{x}_j=\begin{cases}x_j,& \text {if } j=1 \text{ or } j \in P_J,\\
x_1 + x_j,& \text{else}.
\end{cases}
\end{align*}
Define finally 
\begin{align}\label{chi-I-tilde}
  \widetilde\chi_I({\bf x})=\chi_I({\bf \tilde{x}}).
\end{align}

Then, for all \(\beta\in\N_{0}^{3}\) there exists a constant
\(C=C(I,\beta)\) such that, for all  \({\bf 
  x}=(x_1,\ldots,x_N)\in\R^{3N}\),  
\begin{align}\label{est:der-chi's}
  \big|(\partial_{x_1}^{\beta}\widetilde\chi_{I})({\bf x})\big|
  \le C\,|x_1|^{-|\beta|}\,.
\end{align}

Furthermore, if \(|\beta|\ge 1\), then there exists
\(j\in\{2,\ldots,N\}\) and a 
constant \(C=C(I,\beta,j)\) such that, for all  \({\bf 
  x}=(x_1,\ldots,x_N)\in\R^{3N}\) and all  \(n\in\N_{0}\), 
\begin{align}\label{est:der-chi's-BIS}
  \big|(\partial_{x_1}^{\beta}\widetilde\chi_{I})({\bf x})\big|
  \le C\,|x_j|^{-n}|x_1|^{n-|\beta|}\,
  \intertext{ or }
  \label{est:der-chi's-BIS-2}
  \big|(\partial_{x_1}^{\beta}\widetilde\chi_{I})({\bf x})\big|
  \le C\,|x_1+x_j|^{-n}|x_1|^{n-|\beta|}\,.
\end{align}
\end{lemma}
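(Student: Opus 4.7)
The plan is to establish both bounds by analyzing $\widetilde\chi_I$ as a product of factors of the form $\chi_{i}(4^{s}|\tilde x_j|/|x_1|)$, where $\tilde x_j = x_j$ for $j \in P_J$ and $\tilde x_j = x_1+x_j$ otherwise, with $i \in\{1,2\}$ and $s \in \N_0$ prescribed by the block ($P_J$ or $Q_s$) that contains $j$. Each factor depends on $x_1$ only through the ratio $|\tilde x_j|/|x_1|$, and it is this scale-invariance that drives both estimates.

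For \eqref{est:der-chi's} I would proceed by induction on $|\beta|$, using the chain rule for each factor and the Leibniz rule for the product. A direct computation gives, for $\phi(x_1):=|\tilde x_j|/|x_1|$ and any $\gamma \in \N_0^3$, the bound $|\partial_{x_1}^\gamma \phi(x_1)| \le C(|\tilde x_j|\,|x_1|^{-|\gamma|-1}+|x_1|^{-|\gamma|})$, which is $\le C'|x_1|^{-|\gamma|}$ whenever $|\tilde x_j|/|x_1|$ is bounded. A factor is either constant in a neighborhood of ${\bf x}$ (so its higher derivatives vanish) or else some $\chi^{(k)}$ with $k\ge 1$ is nonzero at ${\bf x}$, which forces its argument into $[1/4,3/4]$ and therefore $|\tilde x_j|/|x_1|$ into a bounded interval. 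Chaining these observations through the Faa-di-Bruno expansion of each factor and then Leibniz for the product, every term in $\partial_{x_1}^\beta \widetilde\chi_I$ is bounded by $C|x_1|^{-|\beta|}$, which gives \eqref{est:der-chi's}.

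For \eqref{est:der-chi's-BIS}--\eqref{est:der-chi's-BIS-2} with $|\beta| \ge 1$ I would argue pointwise. At any ${\bf x}$ with $\partial_{x_1}^\beta\widetilde\chi_I({\bf x})\ne 0$, at least one factor of $\widetilde\chi_I$ must be non-constant near ${\bf x}$, so some $\chi^{(k)}$ with $k\ge 1$ is nonzero at ${\bf x}$; pick $j=j({\bf x})\in\{2,\ldots,N\}$ to be the index of such a ``transition'' factor. Then $4^{s}|\tilde x_j|/|x_1|$ lies in $(1/4,3/4)$, so $|\tilde x_j|/|x_1|\le c <1$ with $c=c(I)$ depending only on $I$. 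Consequently $(|x_1|/|\tilde x_j|)^n\ge 1$ for every $n\in\N_0$, and combining with \eqref{est:der-chi's} gives
\[
|\partial_{x_1}^\beta \widetilde\chi_I({\bf x})|\le C|x_1|^{-|\beta|} \le C\,|\tilde x_j|^{-n}|x_1|^{n-|\beta|}
\]
uniformly in $n$. If $j\in P_J$ then $\tilde x_j=x_j$, which is \eqref{est:der-chi's-BIS}; otherwise $\tilde x_j=x_1+x_j$, giving \eqref{est:der-chi's-BIS-2}. Points where $\partial_{x_1}^\beta\widetilde\chi_I({\bf x})=0$ are trivially handled by any $j$.

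The principal subtlety is that the transition index $j$ depends on ${\bf x}$: $\R^{3N}$ partitions naturally into finitely many regions indexed by which factor of $\widetilde\chi_I$ is in its transition zone, and on each such region a single $j$ works with a uniform constant. The overall $C=C(I,\beta)$ is the maximum of these finitely many constants, which is the sense in which ``there exists $j$'' is to be read in the statement; this also matches how the bound is invoked in \eqref{big-step-forward}, where $n=1$ and ``$j$'' is chosen case by case.
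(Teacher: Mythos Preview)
Your approach matches the paper's: Leibniz to reduce to individual factors, Fa\`a di Bruno for each factor, and the support constraint $1/4\le g\le 3/4$ to bound $|\partial_{x_1}^\gamma g|$ by $C|x_1|^{-|\gamma|}$; for the refined estimates you, like the paper, extract the extra factor $(|x_1|/|\tilde x_j|)^n$ from that same support constraint, and your remark that $j$ is really chosen pointwise (region by region) is exactly how the lemma is used in \eqref{big-step-forward}. One small correction: your intermediate bound $|\partial_{x_1}^\gamma\phi|\le C(|\tilde x_j|\,|x_1|^{-|\gamma|-1}+|x_1|^{-|\gamma|})$ is not valid in general when $\tilde x_j=x_1+x_j$ and $|\gamma|\ge 2$ (the term $\partial_{x_1}^\sigma|x_1+x_j|\sim|x_1+x_j|^{1-|\sigma|}$ can blow up), but since you only invoke it on the support of $\chi_i^{(m)}\circ g$ where $|\tilde x_j|\sim|x_1|$, the conclusion is unaffected---the paper sidesteps this by stating the derivative bounds directly on $\supp(\chi_i^{(m)}\circ g)$.
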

\begin{pf}
First note that, by Leibniz' rule, to prove \eqref{est:der-chi's} it
suffices to prove that for all \(\gamma\in\N_{0}^{3}\),  there 
exists a constant such that
\begin{align}\label{est:der-f's}
  \big|(\partial_{x_1}^{\gamma}f)(x_1)\big|
  \le C\,|x_1|^{-|\gamma|}\,
\end{align}
for \(f\) any of the functions
\begin{align}\label{functions}
 \chi_{1}\left(\frac{4^k|x_j|}{|x_1|}\right)\,,\ 
 \chi_{1}\left(\frac{4^{k}|x_1+x_j|}{|x_1|}\right)\,,\ 
  \chi_{2}\left(\frac{4^{k}|x_1+x_j|}{|x_1|}\right)\,,
\end{align}
(\(k\in\{0,\ldots,N\}, j\neq 1\)). By the choice of \(\chi_1\) and
\(\chi_2\), the bound \eqref{est:der-f's} is trivial for 
\(\gamma=0\) (with \(C=1\)). In particular, \eqref{est:der-chi's}
trivially holds if 
\(\beta=0\) (again, with \(C=1\)). 

Secondly, note that in each case, for any
\(\gamma\in\N_{0}^{3}\setminus\{0\}\), 
\begin{align}\label{Faa-di-Bruno}\nonumber
  \big(\partial_{x_1}^{\gamma}&f\big)(x_1)\\
  &=\sum_{\underset{\gamma_1+\cdots+\gamma_s=\gamma}{1\le m\le|\gamma|}}
  c_{m,\gamma}\chi_{i}^{(m)}(g(x_1))
  (\partial^{\gamma_1}_{x_1}g)(x_1)\cdot\ldots\cdot(\partial^{\gamma_s}_{x_1}g)(x_1)\,,
\end{align}
with \(i=1\) or \(2\), and \(g(x_1)\) either \(\frac{4^k|x_j|}{|x_1|}\) or 
\(\frac{4^k|x_1+x_j|}{|x_1|}\). On \(\supp (\chi_i^{(m)}\!\!\circ g)\)
(\(m\ge1\)) we
have, in all cases (see \eqref{partition:supports})
\begin{align}\label{bound g}
  \frac14\le g(x_1)\le \frac34\,.
\end{align}
Hence, if \(g(x_1)=\frac{4^k|x_j|}{|x_1|}\), then for any
\(\gamma\in\N_{0}^{3}\setminus\{0\}\), on \(\supp
(\chi_i^{(m)}\!\!\circ g)\), 
\begin{align}\label{est:first-der-g}
  \big|(\partial^{\gamma}_{x_1}g)(x_1)\big|
  \le c_{\gamma,k}|x_j|\,|x_1|^{-1-|\gamma|}\le
  \tilde{c}_{\gamma,k}|x_1|^{-|\gamma|}\,.
\end{align}
On the other hand, if \(g(x_1)=\frac{4^k|x_1+x_j|}{|x_1|}\), then
for any \(\gamma\in\N_{0}^{3}\setminus\{0\}\),
again on \(\supp (\chi_i^{(m)}\!\!\circ g)\), 
\begin{align}\label{est:second-der-g}\nonumber
  \big|(\partial^{\gamma}_{x_1}g)(x_1)\big|
  &=\Big| \sum_{\sigma\le\gamma}\binom{\gamma}{\sigma}
  \big(\partial^{\sigma}_{x_1}|x_1+x_j|\big)
  \big(\partial_{x_1}^{\gamma-\sigma}|x_1|^{-1}\big)\Big|\\
  &\le\sum_{\sigma\le\gamma} c_{\gamma,\sigma}|x_1+x_j|^{1-|\sigma|}
  |x_1|^{-1-|\gamma|+|\sigma|}
  \le \tilde{c}_{\gamma,k}|x_1|^{-|\gamma|}\,.
\end{align}
In both \eqref{est:first-der-g} and \eqref{est:second-der-g}, the
second inequality follows from \eqref{bound g}.

Hence, \eqref{Faa-di-Bruno}, \eqref{est:first-der-g},
\eqref{est:second-der-g}, and the fact that all derivatives of
\(\chi_1\) and \(\chi_2\) are globally bounded, imply that
\begin{align}\label{est:final-der-f}\nonumber
  \big| \big(\partial_{x_1}^{\gamma}&f\big)(x_1)\big|\\
  &\le \sum_{\underset{\gamma_1+\cdots+\gamma_s=\gamma}{1\le m\le|\gamma|}}
  \tilde{c}_{m,\gamma}|x_1|^{-|\gamma_1|}\cdot\ldots\cdot|x_1|^{-|\gamma_s|}
  =C|x_1|^{-|\gamma|}\,.
\end{align}
This finishes the proof of \eqref{est:der-f's}
in the case \(|\gamma|\ge1\), and hence the proof of \eqref{est:der-chi's}. 

To prove that \eqref{est:der-chi's-BIS}
or \eqref{est:der-chi's-BIS-2} hold when \(|\beta|\ge1\), notice that
in this case at least one of the functions in the product in
\eqref{chi-I-tilde} (that is, in \eqref{functions}) gets
differentiated (that is, \(|\gamma|\ge1\)). For this one, do as above, 
but use additionally \eqref{bound g} to get, for all \(n\in\N\), 
\begin{align}\label{extra-factors-g}
  \le \frac{|x_1|^n}{|x_j|^n}
  \quad \text{ or }\quad
  \le \frac{|x_1|^n}{|x_1+x_j|^n}\,.
\end{align}
(As before, on \(\supp (\chi_i^{(m)}\!\!\circ g)\)). Applying this in
\eqref{est:first-der-g} or \eqref{est:second-der-g} yields
\eqref{est:der-chi's-BIS} or \eqref{est:der-chi's-BIS-2}.
\end{pf}
\section{Needed {\it a priori} estimates}
\label{sec:app1}
In this section we collect needed results from the literature.

We start by Sobolev embedding.
\begin{thm}[{\cite[Theorem 6 p.~284]{Evans}, \cite[4.12 Theorem
    p.~85]{Adams}}]\label{thm:Sobolev-embedding} 
Let \(\Omega\subset\R^{n}\) be open and bounded, and let \(k\in\N
,p\ge1\). 
\begin{itemize}
\item[(i)]
Assume \(\Omega\) satisfies an
interior cone condition. Then, 
for any \(k, p\) with 
\(kp<n\), we have the continuous embedding
\begin{align}\label{sobolev}
  W^{k,p}(\Omega)\hookrightarrow L^q(\Omega)\, \
  \text{ for all } q\in[p,p^*], \text{with } p^*:=np/(n-kp)\,.
\end{align}
Moreover, there exists a constant \(C=C(k,p,n,\Omega)\) such that
\begin{align}\label{eq:Sob}
  \|u\|_{L^{q}(\Omega)}\le C\|u\|_{W^{k,p}(\Omega)} \ \text{ for all }
  u\in W^{k,p}(\Omega)\,.
\end{align}
\item[(ii)]
Assume \(\Omega\) is locally Lipschitz. Then for
\(kp>n\),  we have the continuous embedding
\begin{align}\label{morrey}
   W^{k,p}(\Omega)&\hookrightarrow C^{k-1-[n/p],\theta}(\overline{\Omega})\, \
   \text{ for all } \theta\in[0,\theta_0]\,, 
   \\
   \theta_0
   &=\begin{cases}[n/p]+1-(n/p) ,  \text{ if } n/p \text{ is not
         an integer}\,,\\ \text{ any positive number less than } 1 \text{
         if } n/p \text{ is an integer}\,.
\end{cases}\nonumber
\end{align}
Moreover, there exists a constant \(C=C(k,p,n,\theta,\Omega)\) such that
\begin{align}\label{eq:Morrey}
  \|u\|_{C^{k-1-[n/p],\theta}(\overline{\Omega})}\le C\|u\|_{W^{k,p}(\Omega)} \ \text{ for all }
  u\in W^{k,p}(\Omega)\,.
\end{align}
\end{itemize}
\end{thm}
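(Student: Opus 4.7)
The plan is to prove the two embeddings separately, first establishing the prototype inequalities on \(\R^n\) and then transferring them to the bounded domain \(\Omega\) via extension operators whose existence is guaranteed by the stated geometric hypotheses.

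For part (i) with \(kp<n\), I would begin with the base case \(k=1\), \(p=1\) and prove the Gagliardo--Nirenberg--Sobolev inequality \(\|u\|_{L^{n/(n-1)}(\R^n)}\le C\|\nabla u\|_{L^1(\R^n)}\) for \(u\in C^1_c(\R^n)\). The key ingredient is the pointwise bound \(|u(x)|\le \int_{\R}|\partial_i u|\,dt\) along each coordinate direction, giving \(|u|^{n/(n-1)}\le \prod_{i=1}^n\bigl(\int|\partial_i u|\,dx_i\bigr)^{1/(n-1)}\), followed by iterated application of the generalized H\"older inequality to integrate out the variables one at a time. Applying this to \(v=|u|^{\gamma}\) with \(\gamma=p(n-1)/(n-p)\) upgrades the inequality to \(W^{1,p}\hookrightarrow L^{p^\ast}\) with \(p^\ast=np/(n-p)\); iterating \(k\) times (at each stage applied to the derivatives of the previous stage) yields \(W^{k,p}\hookrightarrow L^{np/(n-kp)}\) on \(\R^n\). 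To transfer from \(\R^n\) to \(\Omega\), invoke a Calder\'on--Stein extension operator \(E\colon W^{k,p}(\Omega)\to W^{k,p}(\R^n)\), whose construction uses precisely the interior cone condition, and which satisfies \(\|Eu\|_{W^{k,p}(\R^n)}\le C\|u\|_{W^{k,p}(\Omega)}\). The intermediate range \(q\in[p,p^\ast)\) then follows from H\"older's inequality together with the boundedness of \(\Omega\).

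For part (ii) with \(kp>n\), the workhorse is Morrey's inequality: for \(p>n\) and \(u\in C^1(\R^n)\), one proves \(|u(x)-u(y)|\le C|x-y|^{1-n/p}\|\nabla u\|_{L^p(\R^n)}\) by estimating \(|u(x)-u_{B(x,r)}|\) through integration of \(\nabla u\) along radial segments from \(x\) and H\"older's inequality on the resulting integral. This gives \(W^{1,p}\hookrightarrow C^{0,1-n/p}(\R^n)\) when \(p>n\). For the general case \(kp>n\), combine part (i) with Morrey: set \(m=k-1-[n/p]\), so that any derivative of order \(m\) lies in \(W^{k-m,p}\) with \((k-m)p>n\); applying part (i) \(k-m-1\) times to those derivatives lifts them into \(W^{1,q}\) with \(q>n\) (with \(q\) arbitrarily large below the critical value when \(n/p\in\N\)), after which Morrey yields H\"older continuity of exponent \(\theta_0\) for \(\partial^\alpha u\), \(|\alpha|=m\). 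Transfer to the locally Lipschitz \(\Omega\) again proceeds through an extension operator (Stein's chain-of-balls construction).

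The main obstacle is the careful bookkeeping in part (ii): one must track the decrements of the Sobolev exponent through each iteration, handle the borderline case where \(n/p\) is an integer (which forces the open range \(\theta<\theta_0\), because the critical embedding \(W^{1,n}\hookrightarrow L^\infty\) fails in general), and confirm that derivatives of order exactly \(m=k-1-[n/p]\) end up in a Sobolev space on which Morrey's inequality can be applied. The extension-operator constructions are geometric and can be cited from Stein's textbook without interacting with the analytic estimates above.
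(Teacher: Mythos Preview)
The paper does not prove this theorem at all: it is stated in the appendix as a needed result from the literature, with explicit citations to Evans and Adams, and no proof is given. Your outline is the standard textbook argument (Gagliardo--Nirenberg--Sobolev for part (i), Morrey for part (ii), transferred to $\Omega$ via Calder\'on--Stein extension), and it is correct in its broad strokes, but there is no ``paper's own proof'' to compare it against.
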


Next, we list some results on elliptic regularity.

The following is adapted from \cite[Theorem~8.32]{GT}
by choosing \(a^{ij}=\delta_{ij}, b^i=f^i=0\), \(i,j=1,\ldots,n\).
\begin{thm}[{\cite[Theorem~8.32]{GT}}]\label{thm-GT}
Let \(\theta\in(0,1)\), and let \(u\in C^{1,\theta}(\Omega)\) be a
weak solution of  
\begin{align}\label{eq:GT}
  \big({}-\Delta+c(x)\cdot\nabla+d(x)\big)u = g
\end{align}
in a bounded domain \(\Omega\subset\R^{n}\), with \(c_i,d,g\in
L^{\infty}(\Omega)\), with
\begin{align}\label{bounds-coeff:GT}
  \max_{i=1,\ldots,n}\big\{\|c_i\|_{L^{\infty}(\Omega)}\big\}, \|d\|_{L^{\infty}(\Omega)}\le K\,.
\end{align}

Then for any subdomain
\(\Omega'\subset\subset\Omega\) we have
\begin{align}\label{est:GT}
   \|u\|_{C^{1,\theta}(\overline{\Omega'})}\le
   C\big(\|u\|_{L^{\infty}(\Omega)}+\|g\|_{L^{\infty}(\Omega)}\big)\,, 
\end{align}
for \(C=C(n,K,d')\) where \(d'=\dist(\Omega',\Omega)\). 
\end{thm}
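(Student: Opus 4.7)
The statement is the classical interior Schauder-type $C^{1,\theta}$ estimate from Gilbarg--Trudinger; the authors quote it verbatim without proof, but here is how I would argue it.

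The plan is to reduce to the Poisson equation, bootstrap via the Calderón--Zygmund $W^{2,p}$ theory together with Morrey's embedding, and then close the loop by a chained interpolation argument. Concretely, I would first rewrite the PDE in the form
\begin{align*}
  -\Delta u = h, \qquad h := g - c(x)\cdot\nabla u - d(x)\,u .
\end{align*}
Since $u\in C^{1,\theta}(\Omega)\subset W^{1,\infty}_{\mathrm{loc}}(\Omega)$ and $c_i,d\in L^{\infty}(\Omega)$ with the bound \eqref{bounds-coeff:GT}, one has, for every $\Omega_1\subset\subset\Omega$,
\begin{align*}
  \|h\|_{L^{\infty}(\Omega_1)}
  \le \|g\|_{L^{\infty}(\Omega)}
  + nK\|\nabla u\|_{L^{\infty}(\Omega_1)}
  + K\|u\|_{L^{\infty}(\Omega)} .
\end{align*}

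Next I would apply the interior Calderón--Zygmund estimate for $-\Delta u=h$ on concentric subdomains $\Omega_2\subset\subset\Omega_1$: for any $p\in(1,\infty)$,
\begin{align*}
  \|u\|_{W^{2,p}(\Omega_2)}
  \le C_p\bigl(\|h\|_{L^{p}(\Omega_1)} + \|u\|_{L^{p}(\Omega_1)}\bigr).
\end{align*}
Choosing $p>n/(1-\theta)$ so that Morrey's embedding (Theorem~\ref{thm:Sobolev-embedding}(ii) with $k=2$) gives $W^{2,p}(\Omega_2)\hookrightarrow C^{1,\theta}(\overline{\Omega_2})$, and passing from $L^p$ to $L^\infty$ (bounded domains), one obtains
\begin{align*}
  \|u\|_{C^{1,\theta}(\overline{\Omega_2})}
  \le C\bigl(\|g\|_{L^{\infty}(\Omega)}
  + \|\nabla u\|_{L^{\infty}(\Omega_1)}
  + \|u\|_{L^{\infty}(\Omega)}\bigr).
\end{align*}

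The final step is to remove the $\|\nabla u\|_{L^{\infty}}$ term on the right-hand side. For this I would chain a finite nested family $\Omega'=\Omega_{M}\subset\subset\cdots\subset\subset\Omega_0\subset\subset\Omega$ with distances $\sim d'/M$, apply the previous inequality at each scale, and invoke the standard interpolation inequality
\begin{align*}
  \|\nabla u\|_{L^{\infty}(\Omega_{j+1})}
  \le \varepsilon\,[u]_{C^{1,\theta}(\overline{\Omega_j})}
  + C_{\varepsilon}\|u\|_{L^{\infty}(\Omega_j)} ,
\end{align*}
valid on bounded smooth subdomains. Choosing $\varepsilon$ small enough at each step to absorb the Hölder seminorm into the left-hand side, and iterating, would eliminate the gradient term and yield \eqref{est:GT} with $C=C(n,K,d')$.

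The main obstacle is not the interior regularity of $u$ (which is handed to us by Calderón--Zygmund plus Morrey) but precisely this closing step: absorbing $\|\nabla u\|_{L^{\infty}}$ without blowing up the constant. The standard remedy is the chained interpolation/covering argument sketched above, which is essentially the route taken in \cite{GT}; for that reason the authors (and I) are content to cite the result.
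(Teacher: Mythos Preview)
You correctly observe that the paper does not prove this result; it is simply quoted from \cite[Theorem~8.32]{GT} as an input (the appendix is explicitly a collection of ``needed results from the literature''), so there is no proof in the paper to compare against. Your sketch---reducing to $-\Delta u=h$, applying interior $W^{2,p}$ estimates for large $p$, then Morrey's embedding $W^{2,p}\hookrightarrow C^{1,\theta}$ for $p>n/(1-\theta)$, and finally absorbing $\|\nabla u\|_{L^\infty}$ via interpolation on a chain of nested subdomains---is a legitimate route and is essentially sound. It is worth noting, however, that the actual proof in Gilbarg--Trudinger proceeds differently: it goes through direct potential-theoretic H\"older estimates for divergence-form equations (their Chapters~4 and~8 machinery, in particular weighted interior norms and estimates on the Newtonian potential) rather than through $W^{2,p}$ theory, so your argument is an alternative derivation rather than a reproduction of theirs. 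Both routes yield the same a~priori bound with constants depending only on $n$, $K$, and $d'$.
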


The following is adapted from \cite[Theorem~9.11]{GT}
by choosing \(a^{ij}=\delta_{ij}\), \(i,j=1,\ldots,n\).
\begin{thm}[{\cite[Theorem~9.11]{GT}}]\label{thm-GT-two}
Let \(\Omega\) be an open set in \(\R^{n}\) and 
\(u\in W^{2,p}_{{\rm
    loc}}(\Omega)\cap L^{p}(\Omega)\), \(1<p<\infty\), a strong
solution of the equation
\begin{align}\label{eq:GT-two}
  \big({}-\Delta+b(x)\cdot\nabla+c(x)\big)u = f
\end{align}
in \(\Omega\) with \(b_i,c\in
L^{\infty}(\Omega)\), \(f\in L^p(\Omega)\), with
\begin{align}\label{bounds-coeff:GT-L-2}
  \max_{i=1,\ldots,n}\big\{\|b_i\|_{L^{\infty}(\Omega)}\big\}, \|c\|_{L^{\infty}(\Omega)}\le \Lambda\,.
\end{align}
Then for any subdomain
\(\Omega'\subset\subset\Omega\),
\begin{align}\label{est:GT-L-2}
   \|u\|_{W^{2,p}(\Omega')}\le C\big(\|u\|_{L^{p}(\Omega)}+\|f\|_{L^{p}(\Omega)}\big)\,,
\end{align}
where \(C\) depends on \(n,p,\Lambda, \Omega'\), and \(\Omega\). 
\end{thm}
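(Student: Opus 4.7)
The plan is to reduce the full equation to Poisson's equation $-\Delta u = g$ by a standard localization plus absorption argument, and then invoke the Calder\'on--Zygmund $L^p$ theory for the Laplacian. The central analytic input is the \emph{global Calder\'on--Zygmund estimate}: for $1<p<\infty$ and any $v\in W^{2,p}(\R^n)$ with compact support, $\|D^2 v\|_{L^p(\R^n)}\le C_{n,p}\|\Delta v\|_{L^p(\R^n)}$. I would establish this by writing $v$ as the convolution of $\Delta v$ with the Newtonian potential $\Gamma_n$ and differentiating twice, so that $\partial_i\partial_j v$ is obtained from $\Delta v$ by a principal-value singular integral with kernel of Calder\'on--Zygmund type. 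Its $L^p$-boundedness follows from $L^2$-boundedness (the Fourier multiplier $\xi_i\xi_j/|\xi|^2$ is bounded, by Plancherel), a weak-$(1,1)$ estimate (via Calder\'on--Zygmund decomposition), Marcinkiewicz interpolation for $1<p\le 2$, and duality for $2\le p<\infty$. This is the hardest step.

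With this global estimate in hand, I localize: pick nested subdomains $\Omega'=\Omega_0\subset\subset\Omega_1\subset\subset\Omega$ and a cutoff $\eta\in C_c^\infty(\Omega_1)$ with $\eta\equiv1$ on $\Omega_0$ and $|D^k\eta|\le Cd^{-k}$, where $d=\dist(\Omega_0,\partial\Omega_1)$. Setting $v:=\eta u$ (which lies in $W^{2,p}(\R^n)$ with compact support), the product rule and the equation give
\begin{align*}
-\Delta v &= \eta(-\Delta u) - 2\nabla\eta\cdot\nabla u - u\Delta\eta \\
&= \eta\bigl(f - b\cdot\nabla u - c u\bigr) - 2\nabla\eta\cdot\nabla u - u\Delta\eta .
\end{align*}
Applying the global Calder\'on--Zygmund estimate to $v$ and using $\|b_i\|_\infty,\|c\|_\infty\le\Lambda$ yields
\begin{align*}
\|D^2 u\|_{L^p(\Omega_0)} \le \|D^2 v\|_{L^p(\R^n)} \le C\bigl(\|f\|_{L^p(\Omega)} + \|\nabla u\|_{L^p(\Omega_1)} + \|u\|_{L^p(\Omega)}\bigr),
\end{align*}
with $C=C(n,p,\Lambda,d)$.

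It remains to absorb the first-order term $\|\nabla u\|_{L^p(\Omega_1)}$. I would use the Ehrling/Gagliardo--Nirenberg type interpolation on nested domains: for every $\varepsilon>0$ there exists $C_\varepsilon$ with
\[
\|\nabla u\|_{L^p(\Omega_1)} \le \varepsilon\,\|D^2 u\|_{L^p(\Omega_2)} + C_\varepsilon\,\|u\|_{L^p(\Omega)},
\]
for any $\Omega_1\subset\subset\Omega_2\subset\subset\Omega$. The main obstacle is to close the circle: the $D^2 u$ on the right sits on a \emph{larger} domain than the $D^2 u$ on the left, so one cannot absorb directly. This is resolved by the standard chain-of-domains trick: apply the whole scheme on a sequence of nested domains $\Omega_0\subset\Omega_1\subset\cdots\subset\Omega_N\subset\Omega$ with controlled separations, track the growth of constants, and choose $\varepsilon$ small enough so that a finite geometric-type iteration eliminates the $D^2 u$ term. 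Reassembling yields the desired estimate with $C$ depending only on $n,p,\Lambda,\Omega',\Omega$.
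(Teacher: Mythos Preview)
The paper does not prove this statement: it is quoted verbatim (with the simplification $a^{ij}=\delta_{ij}$) from \cite[Theorem~9.11]{GT} and placed in Appendix~\ref{sec:app1} as a black-box input, so there is no ``paper's own proof'' to compare against. Your sketch is essentially the classical argument that \cite{GT} gives in Chapter~9: Calder\'on--Zygmund $L^p$ bounds for the Newtonian potential, localization by a cutoff, and then absorption of the intermediate gradient term. One remark on the last step: the chain-of-domains iteration you describe can be made to work, but the cleaner way (and the one \cite{GT} actually uses) is to pass to weighted interior seminorms $\sup_{x}d_x^{k}\|D^{k}u\|_{L^{p}(B(x,\sigma d_x))}$, for which the interpolation inequality holds on a \emph{fixed} domain rather than a strictly larger one; the absorption then closes in a single step without tracking a sequence of nested subdomains.
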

\begin{thm}[{\cite[Lemma~2.4.1.4]{Grisvard}}]\label{thm-Grisvard}
  Let \(\Omega\) be an open and bounded set in \(\R^{n}\), let $2 \leq
  p < \infty$, and let \(u\in W^{2,2}(\Omega)\) be a strong 
  solution of the equation
  \begin{align}\label{eq:GT-two-bis}
    {}-\Delta u = f
  \end{align}
  in \(\Omega\) with \(f\in L^p(\Omega)\).
  Then $u \in W^{2,p}_{{\rm loc}}(\Omega)$.
\end{thm}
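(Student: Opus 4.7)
The plan is to bootstrap from $W^{2,2}_{\rm loc}$ to $W^{2,p}_{\rm loc}$ by iteratively applying the Calderón--Zygmund $L^q$ estimate for the Laplacian together with the Sobolev embedding of Theorem~\ref{thm:Sobolev-embedding}. The starting point is the classical Calderón--Zygmund singular integral estimate: for every $q\in(1,\infty)$ and every $v\in W^{2,q}(\R^n)$ with compact support, one has $\|D^2v\|_{L^q(\R^n)}\le C_q\|\Delta v\|_{L^q(\R^n)}$. Equivalently, the Newtonian potential of an $L^q$ function lies in $W^{2,q}_{\rm loc}$, and the difference with $v$ is harmonic.

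The standard localisation device propagates interior $W^{2,q}$ estimates. Given $u\in W^{2,q}_{\rm loc}(\Omega)$ satisfying $-\Delta u=f$, and a cutoff $\eta\in C_c^\infty(\Omega)$ with $\eta\equiv 1$ on some $\Omega'\subset\subset\supp\eta$, set $v=\eta u$ (extended by zero to $\R^n$). Then
\begin{equation*}
-\Delta v \;=\; \eta f \;-\; 2\nabla\eta\cdot\nabla u \;-\; u\,\Delta\eta.
\end{equation*}
If the right-hand side lies in $L^{\tilde q}$ for some $\tilde q$, then Calderón--Zygmund (applied to the Newtonian potential of $-\Delta v$, whose difference with $v$ is harmonic hence smooth on the interior of $\supp\eta$) upgrades $v$, and therefore $u$ on $\Omega'$, to $W^{2,\tilde q}$.

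The bootstrap is then driven by Sobolev embedding. Fix a compact $K\subset\Omega$ and nested open sets $K\subset\Omega_M\subset\subset\Omega_{M-1}\subset\subset\cdots\subset\subset\Omega_0\subset\subset\Omega$ with corresponding cutoffs. Set $q_0=2$ and, inductively, $q_{k+1}=\min(p,q_k^*)$, where by Theorem~\ref{thm:Sobolev-embedding}(i) the exponent $q_k^*=nq_k/(n-q_k)$ if $q_k<n$ (and $q_k^*$ may be taken arbitrarily large finite once $q_k\ge n$). From $u\in W^{2,q_k}_{\rm loc}(\Omega_{k})$, Sobolev gives $\nabla u\in L^{q_k^*}_{\rm loc}(\Omega_k)$ and $u\in L^{q_k^*}_{\rm loc}(\Omega_k)$, so the right-hand side of $-\Delta(\eta_{k+1} u)$ lies in $L^{q_{k+1}}$ on $\Omega_k$ (using $f\in L^p$ for the $\eta_{k+1}f$ term and Sobolev for the lower-order terms). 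The localisation step then yields $u\in W^{2,q_{k+1}}_{\rm loc}(\Omega_{k+1})$. Since $q_k^*-q_k$ is bounded below by a positive constant as long as $q_k$ stays away from $n$, and once $q_k\ge n$ a single further step reaches any prescribed finite exponent, after a finite number $M=M(n,p)$ of iterations one attains $q_M\ge p$, giving $u\in W^{2,p}(\Omega_M)$ and hence $u\in W^{2,p}_{\rm loc}(\Omega)$ since $K$ was arbitrary.

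The main obstacle will be the bookkeeping: choosing the nested $\Omega_k$ and cutoffs so that the iteration terminates before exhausting the room between $K$ and $\partial\Omega$, and handling the borderline case $q_k=n$ (where Theorem~\ref{thm:Sobolev-embedding}(i) gives $W^{1,n}\hookrightarrow L^r$ for every finite $r$, but not for $r=\infty$). This is mild, since one can simply choose $q_{k+1}$ large enough that $q_{k+1}\ge p$ in one further step.
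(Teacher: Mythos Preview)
The paper does not prove Theorem~\ref{thm-Grisvard}; it is quoted from Grisvard as a needed result from the literature (see the opening sentence of Appendix~\ref{sec:app1}). So there is no ``paper's own proof'' to compare against.

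Your bootstrap argument via Calder\'on--Zygmund and Sobolev embedding is the standard route and is essentially correct. A few small points of bookkeeping: when you invoke Theorem~\ref{thm:Sobolev-embedding}(i) to get $\nabla u\in L^{q_k^*}$, you are applying the $k=1$ case to each component of $\nabla u\in W^{1,q_k}$, not the $k=2$ case to $u$; this is fine but worth making explicit. The lower bound on the increment is $q_k^*-q_k=q_k^2/(n-q_k)\ge 4/(n-2)$ for $q_k\in[2,n)$ when $n\ge3$, so the number of iterations is bounded by a quantity depending only on $n$ and $p$, and for $n\le2$ you are already at or past the borderline after the first step. Finally, note that you cannot simply quote Theorem~\ref{thm-GT-two} with $b=c=0$ for the upgrade step, since that theorem \emph{assumes} $u\in W^{2,p}_{\rm loc}$; your Newtonian-potential/Calder\'on--Zygmund argument applied to the compactly supported $v=\eta u$ is exactly what is needed to supply that hypothesis at each stage.
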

%

\begin{thebibliography}{10}

\bibitem{Adams}
Robert~A. Adams and John J.~F. Fournier, \emph{Sobolev spaces}, second ed.,
  Pure and Applied Mathematics (Amsterdam), vol. 140, Elsevier/Academic Press,
  Amsterdam, 2003.

\bibitem{H-O-Ahlrichs-Morgan-2}
Reinhart Ahlrichs, Maria Hoffmann-Ostenhof, Thomas Hoffmann-Ostenhof, and
  John~D. Morgan~III, \emph{Bounds on the decay of electron densities with
  screening}, Phys. Rev. A (3) \textbf{23} (1981), no.~5, 2106--2117.

\bibitem{NistorEtAl}
Bernd Ammann, Catarina Carvalho, and Victor Nistor, \emph{Regularity for
  {E}igenfunctions of {S}chr\"odinger {O}perators}, Lett. Math. Phys.
  \textbf{101} (2012), no.~1, 49--84.

\bibitem{Bingel}
Werner~A. Bingel, \emph{The {B}ehaviour of the {F}irst-{O}rder {D}ensity
  {M}atrix at the {C}oulomb {S}ingularities of the {S}chr\"odinger {E}quation},
  Z. Naturforschg. \textbf{18 a} (1963), 1249--1253.

\bibitem{Ana-rel-HF}
Anna Dall'Acqua, S{\o}ren Fournais, Thomas {\O}stergaard~S{\o}rensen, and
  Edgardo Stockmeyer, \emph{Real analyticity away from the nucleus of
  pseudorelativistic {H}artree-{F}ock orbitals}, Anal. PDE \textbf{5} (2012),
  no.~3, 657--691.

\bibitem{Evans}
Lawrence~C. Evans, \emph{Partial {D}ifferential {E}quations}, second ed.,
  Graduate Studies in Mathematics, vol.~19, American Mathematical Society,
  Providence, RI, 2010.

\bibitem{Flad-et-al-1}
Heinz-J\"urgen Flad, Wolfgang Hackbusch, and Reinhold Schneider, \emph{Best
  {$N$}-term approximation in electronic structure calculations. {I}.
  {O}ne-electron reduced density matrix}, M2AN Math. Model. Numer. Anal.
  \textbf{40} (2006), no.~1, 49--61.

\bibitem{Flad-3}
Heinz-J\"urgen Flad and Gohar Harutyunyan, \emph{Ellipticity of quantum
  mechanical {H}amiltonians in the edge algebra}, Discrete Contin. Dyn. Syst.
  (2011), no.~Dynamical systems, differential equations and applications. 8th
  AIMS Conference. Suppl. Vol. I, 420--429.

\bibitem{Flad-1}
Heinz-J\"urgen Flad, Gohar Harutyunyan, Reinhold Schneider, and Bert-Wolfgang
  Schulze, \emph{Explicit {G}reen operators for quantum mechanical
  {H}amiltonians. {I}. {T}he hydrogen atom}, Manuscripta Math. \textbf{135}
  (2011), no.~3-4, 497--519.

\bibitem{Flad-2}
Heinz-J\"urgen Flad, Gohar Harutyunyan, and Bert-Wolfgang Schulze,
  \emph{Explicit {G}reen operators for quantum mechanical {H}amiltonians. {II}.
  {E}dge type singularities of the helium atom}, ArXiv e-prints {\tt
  1801.07552} (2018), 50 pp.

\bibitem{Flad-et-al-2}
Heinz-J\"urgen Flad, Reinhold Schneider, and Bert-Wolfgang Schulze,
  \emph{Asymptotic regularity of solutions to {H}artree-{F}ock equations with
  {C}oulomb potential}, Math. Methods Appl. Sci. \textbf{31} (2008), no.~18,
  2172--2201.

\bibitem{rho-smooth}
S{\o}ren Fournais, Maria Hoffmann-Ostenhof, Thomas Hoffmann-Ostenhof, and
  Thomas~{\O}stergaard S{\o}rensen, \emph{The {E}lectron {D}ensity is {S}mooth
  {A}way from the {N}uclei}, Comm. Math. Phys. \textbf{228} (2002), no.~3,
  401--415.

\bibitem{Taxco}
\bysame, \emph{On the regularity of the density of electronic wavefunctions},
  Mathematical {R}esults in {Q}uantum {M}echanics ({T}axco, 2001), Contemp.
  Math., vol. 307, Amer. Math. Soc., Providence, RI, 2002, pp.~143--148.

\bibitem{analytic}
\bysame, \emph{Analyticity of the density of electronic wavefunctions}, Ark.
  Mat. \textbf{42} (2004), no.~1, 87--106.

\bibitem{monster}
\bysame, \emph{Sharp {R}egularity {R}esults for {C}oulombic {M}any-electron
  {W}ave {F}unctions}, Comm. Math. Phys. \textbf{255} (2005), no.~1, 183--227.

\bibitem{KS}
\bysame, \emph{Analytic {S}tructure of {M}any-{B}ody {C}oulombic {W}ave
  {F}unctions}, Commun. Math. Phys. \textbf{289} (2009), no.~1, 291--310.

\bibitem{ks-hf}
\bysame, \emph{Analytic structure of solutions to multiconfiguration
  equations}, J. Phys. A: Math. Theor. \textbf{42} (2009), 315208.

\bibitem{thirdder}
S{\o}ren Fournais, Maria Hoffmann-Ostenhof, and Thomas~{\O}stergaard
  S{\o}rensen, \emph{Third {D}erivative of the {O}ne-{E}lectron {D}ensity at
  the {N}ucleus}, Ann. Henri Poincar\'e \textbf{9} (2008), no.~7, 1387--1412.

\bibitem{non-iso}
S{\o}ren Fournais, Thomas~{\O}stergaard S{\o}rensen, Maria Hoffmann-Ostenhof,
  and Thomas Hoffmann-Ostenhof, \emph{Non-{I}sotropic {C}usp {C}onditions and
  {R}egularity of the {E}lectron {D}ensity of {M}olecules at the {N}uclei},
  Ann. Henri Poincar\'e \textbf{8} (2007), no.~4, 731--748.

\bibitem{froese-herbst}
Richard Froese and Ira Herbst, \emph{Exponential {B}ounds and {A}bsence of
  {P}ositive {E}igenvalues for {$N$}-{B}ody {S}chr\"odinger {O}perators}, Comm.
  Math. Phys. \textbf{87} (1982), no.~3, 429--447.

\bibitem{GT}
David Gilbarg and Neil~S. Trudinger, \emph{Elliptic {P}artial {D}ifferential
  {E}quations of {S}econd {O}rder}, Classics in Mathematics, Springer-Verlag,
  Berlin, 2001, Reprint of the 1998 edition.

\bibitem{Grisvard}
Pierre Grisvard, \emph{Elliptic {P}roblems in {N}onsmooth {D}omains},
  Monographs and Studies in Mathematics, vol.~24, Pitman (Advanced Publishing
  Program), Boston, MA, 1985.

\bibitem{H-O-2-schr-ineq}
Maria Hoffmann-Ostenhof and Thomas Hoffmann-Ostenhof, \emph{``{S}chr\"odinger
  inequalities'' and asymptotic behavior of the electron density of atoms and
  molecules}, Phys. Rev. A (3) \textbf{16} (1977), no.~5, 1782--1785.

\bibitem{HO-alone}
\bysame, \emph{Local properties of solutions of {S}chr\"odinger equations},
  Comm. Partial Differential Equations \textbf{17} (1992), no.~3-4, 491--522.

\bibitem{H-O-Ahlrichs-Morgan-1}
Maria Hoffmann-Ostenhof, Thomas Hoffmann-Ostenhof, Reinhart Ahlrichs, and
  John~D. Morgan~III, \emph{On the exponential fall off of wavefunctions and
  electron densities}, Mathematical problems in theoretical physics ({P}roc.
  {I}nternat. {C}onf. {M}ath. {P}hys., {L}ausanne, 1979), Lecture Notes in
  Phys., vol. 116, Springer, Berlin-New York, 1980, pp.~62--67.

\bibitem{HO-Nadira}
Maria Hoffmann-Ostenhof, Thomas Hoffmann-Ostenhof, and Nikolai Nadirashvili,
  \emph{Interior {H}\"older estimates for solutions of {S}chr\"odinger
  equations and the regularity of nodal sets}, Comm. Partial Differential
  Equations \textbf{20} (1995), no.~7-8, 1241--1273.

\bibitem{AHP}
Maria Hoffmann-Ostenhof, Thomas Hoffmann-Ostenhof, and Thomas~{\O}stergaard
  S{\o}rensen, \emph{Electron {W}avefunctions and {D}ensities for {A}toms},
  Ann. Henri Poincar\'e \textbf{2} (2001), no.~1, 77--100.

\bibitem{HO-Stremnitzer}
Maria Hoffmann-Ostenhof, Thomas Hoffmann-Ostenhof, and Hanns Stremnitzer,
  \emph{Local {P}roperties of {C}oulombic {W}ave {F}unctions}, Comm. Math.
  Phys. \textbf{163} (1994), no.~1, 185--215.

\bibitem{Maria-Seiler}
Maria Hoffmann-Ostenhof and Ruedi Seiler, \emph{Cusp conditions for
  eigenfunctions of {$n$}-electron systems}, Phys. Rev. A (3) \textbf{23}
  (1981), no.~1, 21--23.

\bibitem{Hormander}
Lars H{\"o}rmander, \emph{Linear {P}artial {D}ifferential {O}perators}, Third
  revised printing. Die Grundlehren der mathematischen Wissenschaften, Band
  116, Springer-Verlag New York Inc., New York, 1969.

\bibitem{Jastrow}
Robert Jastrow, \emph{Many-body {P}roblem with {S}trong {F}orces}, Phys. Rev.
  \textbf{98} (1955), 1479--1484.

\bibitem{Jecko}
Thierry Jecko, \emph{A {N}ew {P}roof of the {A}nalyticity of the {E}lectronic
  {D}ensity of {M}olecules}, Lett. Math. Phys. \textbf{93} (2010), no.~1,
  73--83.

\bibitem{Kato-s-a}
Tosio Kato, \emph{Fundamental {P}roperties of {H}amiltonian {O}perators of
  {S}chr\"odinger {T}ype}, Trans. Amer. Math. Soc. \textbf{70} (1951),
  195--211.

\bibitem{Kato-reg}
\bysame, \emph{On the {E}igenfunctions of {M}any-{P}article {S}ystems in
  {Q}uantum {M}echanics}, Comm. Pure Appl. Math. \textbf{10} (1957), 151--177.

\bibitem{Leray}
Jean Leray, \emph{Sur les {S}olutions de l'{E}quation de {S}chr\"odinger
  {A}tomique et le {C}as {P}articulier de deux {E}lectrons}, Trends and
  {A}pplications of {P}ure {M}athematics to {M}echanics ({P}alaiseau, 1983),
  Lecture Notes in Phys., vol. 195, Springer, Berlin, 1984, pp.~235--247.

\bibitem{Mezey}
Paul~G. Mezey, \emph{The holographic electron density theorem and quantum
  similarity measures}, Molecular Physics \textbf{96} (1999), no.~2, 169--178.

\bibitem{Simon_notes}
Barry Simon, \emph{Exponential decay of quantum wave functions}, {\tt
  http://www.math.caltech.edu/simon/Selecta/ExponentialDecay.pdf}, Online
  notes, part of {\it Barry Simon's Online Selecta} at {\tt
  http://www.math.caltech.edu/simon/selecta.html}.

\bibitem{Si-semi}
\bysame, \emph{Schr\"odinger semigroups}, Bull. Amer. Math. Soc. (N.S.)
  \textbf{7} (1982), no.~3, 447--526.

\bibitem{Simon-on-Kato}
\bysame, \emph{{Tosio Kato's Work on Non--Relativistic Quantum Mechanics}},
  ArXiv e-prints {\tt 1711.00528} (2017), 215 pp.

\bibitem{Steiner}
Erich Steiner, \emph{Charge {D}ensities in {A}toms}, J. Chem. Phys. \textbf{39}
  (1963), no.~9, 2365--2366.

\end{thebibliography}

\providecommand{\bysame}{\leavevmode\hbox to3em{\hrulefill}\thinspace}
\providecommand{\MR}{\relax\ifhmode\unskip\space\fi MR }
\providecommand{\MRhref}[2]{%
  \href{http://www.ams.org/mathscinet-getitem?mr=#1}{#2}
}
\providecommand{\href}[2]{#2}

\end{document}